\numberwithin{equation}{section} 
\numberwithin{table}{section} 
\numberwithin{figure}{section} 
\newdimen\oldparindent
\def\thm@space@setup{ \thm@preskip=\parskip \thm@postskip=0pt }
\setlist[enumerate, 1]{label={(\roman*)}} 
\newtheorem{theorem}{Theorem}[section]
\newtheorem{definition}[theorem]{Definition}
\newtheorem{proposition}[theorem]{Proposition}
\newtheorem{corollary}[theorem]{Corollary}
\newtheorem{lemma}[theorem]{Lemma}
\newtheorem*{definition*}{Definition}
\newtheorem*{theorem*}{Theorem}
\newtheorem*{proposition*}{Proposition}
\newtheorem*{corollary*}{Corollary}
\newtheorem*{lemma*}{Lemma} 
\newtheorem*{remark*}{Remark}
\newtheorem*{notation*}{Notation}
\newtheorem*{conjecture*}{Conjecture}
  \renewcommand*\mkbibnamelast[1]{\textsc{#1}}}
\let\TC\textcite
\renewcommand\textcite[1]{{\def\mkbibnamelast##1{##1}\TC{#1}}}
\def\eps{\varepsilon}
\title{Cutoff for the logistic SIS epidemic model with self-infection}
\author{Roxanne He}
\address{School of Mathematics and Statistics, The University of Melbourne, Parkville, VIC, 3010, Australia}
\email{hey4@student.unimelb.edu.au}
\author{Malwina Luczak}
\address{Department of Mathematics, University of Manchester}
\email{malwina.luczak@manchester.ac.uk}
\thanks{The research of Malwina Luczak is currently supported by the Leverhulme Trust, grant reference LIP-2022-005, and was previously supported by an 
Australian Research Council Future Fellowship (FT170104009).}
\author{Nathan Ross}
\address{School of Mathematics and Statistics, The University of Melbourne, Parkville, VIC, 3010, Australia}
\email{nathan.ross@unimelb.edu.au}
\date{\today}
\begin{document}

\begin{abstract}
We study a variant of the classical Markovian logistic SIS epidemic model on a complete graph, which has the additional feature that healthy individuals can become infected without contacting an infected member of the population. This additional ``self-infection'' is used to model situations where there is an unknown source of infection or an external disease reservoir, such as an animal carrier population. In contrast to the classical logistic SIS epidemic model, the version with self-infection has a non-degenerate stationary distribution, and we derive precise asymptotics for the time to converge to stationarity (mixing time) as the population size becomes large. It turns out that the chain exhibits the cutoff phenomenon, which is a sharp transition in time from one to zero of the total variation distance to stationarity. We obtain the exact leading constant for the cutoff time, and show the window size is constant (optimal) order.
While this result is interesting in its own right, an additional contribution of our work is that the proof illustrates a recently formalised methodology of Barbour, Brightwell and Luczak~\cite{barbour2022long}, which can be used to show cutoff via a combination of concentration of measure inequalities for the trajectory of the chain, and coupling techniques. 
\end{abstract}

\keywords{}

\subjclass{}

\maketitle

\section{Introduction}
Mathematical models of disease spread are widely used in epidemiology, to assist in preventing and managing epidemic outbreaks.   
One well-studied epidemic model is the logistic Susceptible-Infected-Susceptible model (SIS model) on the complete graph with $N$ vertices, which is a continuous time birth-and-death  Markov chain $X^*_N=(X^*_N(t))_{t\geq0}$ with state space $\{0, 1, \ldots, N\}$, and transitions given by
\begin{align*}
    &x^*\mapsto x^*+1\text{ at rate }\lambda x^*(1-x^*/N),\\
    &x^*\mapsto x^*-1\text{ at rate }\mu x^*,
\end{align*}
where $x^*\in\{0,1, \ldots, N\}$ and $\lambda, \mu>0$. The model describes an epidemic spreading in a closed population with $N$ individuals,
with the number of infected people at time $t$ represented by $X^*_N(t)$. To explain the dynamics, each infected person encounters a random member of the population at rate $\lambda$, and if the other individual is currently susceptible to infection, that individual then becomes infected. Each infected individual recovers at rate $\mu$, and once they are recovered, they immediately become susceptible again. If, at any time, the number of infected people becomes zero, we say that the epidemic outbreak is extinct. 
This model was first formulated and studied by Feller~\cite{feller_1939} in the 1930s, but was not heavily
studied until the 1970s, when it was popularised by Weiss and Dishon~\cite{weiss_dishon_1971}.

We are interested in asymptotics as $N \to \infty$, with $\lambda$, $\mu$  fixed positive constants.  We use the phrase ``with high probability'' to mean ``with probability tending to $1$ as $N \to \infty$''. 
A key quantity in the SIS model is the so-called basic reproduction ratio $\mathcal{R}_0:=\lambda/\mu$. The epidemic is called supercritical if $\mathcal{R}_0>1$, and subcritical if $\mathcal{R}_0<1$. It is well-known that for the supercritical SIS model, if there are a large number of infected individuals at time~0, then with high probability, $X_N^*(t)/N$ heads rapidly towards the stable fixed point of a corresponding deterministic model given by a (logistic) differential equation, then spends most of its time before extinction in the neighbourhood of that fixed point. The time to extinction is exponential in $N$ (in mean and with high probability) as $N\to\infty$; see, for example, N\aa sell~\cite{naasell1996quasi}. For more detailed aspects of the supercritical logistic SIS model such as quasi-stationarity, see, for example, Andersson and Djehiche~\cite{andersson_djehiche_1998}, Barbour~\cite{barbour1976quasi} and N\aa sell~\cite{nasell_1999}. 

In the subcritical case where $\lambda<\mu$, Doering et al.~\cite{doering_sargsyan_sander_2005} showed in 2005 that, if the initial state $X_N^*(0)$ is of order $N$, then the expected extinction time is $\frac{1}{\mu-\lambda} (\log N + O(1))$. 

Brightwell et al.~\cite{brightwell2018extinction} obtained a formula for the asymptotic distribution of the extinction time in the subcritical case, and, in particular, proved that, if the starting state is of order $N$, then for any constant $c>0$ large enough, the probability of extinction by time $\frac{1}{\mu-\lambda}\log(N)-c$ is nearly zero, while by time $\frac{1}{\mu-\lambda}\log(N)+c$, it is nearly one. To be precise, an asymptotic formula for the extinction time, where the randomness has a Gumbel distribution, is given in~\cite{brightwell2018extinction} in the case where $\mu > \lambda$ and both are fixed constants. The authors also consider the case where $\lambda, \mu$ are allowed to vary with $N$, and $\mu (N) - \lambda (N) \to 0^+$ suitably slowly (the ``barely subcritical'' case) and show that the same formula for the extinction time holds in this regime.

Recent work of Foxall \cite{foxall2021extinction} 
summarises and derives further results for the asymptotic behaviour of the extinction time under different asymptotic regimes for the initial infection size and reproduction ratio (also allowing $\lambda$ and $\mu$ to vary with $N$).

The model studied in this paper is a variant of the logistic SIS epidemic model known as the logistic SIS epidemic model with self-infection ($\varepsilon$-SIS model), which is a continuous-time birth-and-death  Markov chain $X_N=(X_N(t))_{t\geq0}$ with state space $\{0, 1, \ldots, N\}$, and transitions given by
\begin{align*}
    &x\mapsto x+1\text{ at rate }\lambda x(1-x/N)+\varepsilon(N-x),\\
    &x\mapsto x-1\text{ at rate }\mu x,
\end{align*}
where $x\in\{0,1,\ldots,N\}$ and $\lambda, \mu, \varepsilon>0$. A literature review for this lesser studied model is given later in the present section.

Compared to the standard SIS model, the $\varepsilon$-SIS model has an additional source of infection, which can be thought of as an infection from an unknown source or external disease reservoir, such as an animal carrier of the disease. Thus, any susceptible person can be infected (at rate~$\eps$)  without having contact with any infected member of the population. 
Excluding the case $\eps=0$ (done throughout this paper), which is the classical well-studied SIS model, 
the $\eps$-SIS model admits a non-degenerate stationary distribution, and so the relevant quantity of interest is the time to stationarity, which is analogous to the extinction time in the classical SIS model. We are interested in asymptotics as the population size $N$ tends to infinity. Assuming the parameters of the model, $\lambda$, $\mu$ and $\varepsilon$ are absolute constants not depending on $N$, we show that the sequence of $\eps$-SIS Markov chains $(X_N)_{N\geq1}$ exhibits a cutoff, as per the following definition.

\begin{definition}\label{cutoffphenomenon}
    The sequence of Markov chains $\{X_N(t):t \ge 0\}_{N\geq1}$ with finite state space $E_N$, transition function $P_N^t(x, \,\cdot\,)$, and stationary distribution $\pi_N$ is said to exhibit a cutoff at time $t_N$ with window size $\omega_N$, if $\omega_N=o(t_N)$ and 
\begin{equation*}
    \lim_{s\to\infty}\liminf_{N\to\infty} \rho_N(t_N-s\omega_N)=1, \quad \lim_{s\to\infty}\limsup_{N\to\infty} \rho_N(t_N+s\omega_N)=0,
\end{equation*}
where 
\begin{equation*}
    \rho_N(t) := \max_{x\in E_N} \lVert P_N^t(x,\,\cdot\,)-\pi_N \rVert_{\text{TV}},
\end{equation*}
is the worst-case total variation distance between the distribution of $X_N(t)$ and the chain's stationary distribution $\pi_N$. 
\end{definition}
Informally, cutoff means that, for any starting state, the distribution of the chain goes from being about as far as possible in total variation distance from the stationary distribution to about as close as possible, over a window of time of length $\omega_N$ centred around time $t_N$.

The cutoff phenomenon for Markov chains was first identified in the 1980s for random transpositions on the symmetric group by Diaconis and Shahshahani \cite{diaconis1981generating}, and the name was coined by Aldous and Diaconis \cite{aldous1986shuffling}. Establishing  cutoff for specific models of Markov chains is to this day an active area of research and in general a difficult problem. 
For the model in question, we have the following main result of the paper, which gives the precise cutoff time of the chain, with an optimal window.
\begin{theorem}\label{cutoff}
The sequence of $\eps$-SIS Markov chains $(X_N)_{N\geq1}$ has a cutoff at $t_N:=\frac{1}{2J}\log N$ with constant window size, where $J:= \sqrt{(\lambda-\mu-\varepsilon)^2+4\lambda\varepsilon}$.
\end{theorem}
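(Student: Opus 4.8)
The plan is to follow the framework of Barbour, Brightwell and Luczak~\cite{barbour2022long}: combine a Gaussian-scale concentration estimate for the trajectory of $X_N$ around its fluid limit with a short coupling argument near the stable equilibrium. Writing $X_N(t)=Nf_N(t)$, the rescaled chain is density-dependent with drift $N\,g(f)$, where
\[
g(f)=\lambda f(1-f)+\varepsilon(1-f)-\mu f=-\lambda(f-f^*)(f-f_-),\qquad f^*=\frac{\lambda-\mu-\varepsilon+J}{2\lambda},\quad f_-=\frac{\lambda-\mu-\varepsilon-J}{2\lambda}.
\]
One checks $f_-<0<f^*<1$, so $\dot f=g(f)$ has the unique stable equilibrium $f^*$ in $[0,1]$, and, crucially, $g'(f^*)=-J$; hence the fluid solution from any $f(0)\in[0,1]$ obeys $|f(t)-f^*|=A(f(0))\,e^{-Jt}(1+o(1))$, with $A(\cdot)\ge 0$ bounded on $[0,1]$ and $A(0)>0$. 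Around $Nf^*$ the chain has fluctuations of order $\sqrt N$ (the Ornstein--Uhlenbeck scale; $\pi_N$ is asymptotically $\mathcal N(Nf^*,\mu f^*N/J)$). Thus, started from $X_N(0)=0$, the fluid value $Nf(t)$ lies at distance $\asymp Ne^{-Jt}$ from $Nf^*$, which equals $\sqrt N$ exactly when $e^{-Jt}=N^{-1/2}$, i.e.\ when $t=t_N=\frac{1}{2J}\log N$; replacing $t_N$ by $t_N\pm s$ multiplies this distance by the constant $e^{\mp Js}$, turning it into an arbitrarily large, resp.\ small, multiple of the fluctuation scale as $s\to\infty$. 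This is the source of the cutoff at $t_N$ and of the constant window.

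Two inputs are needed. (A) \emph{Trajectory concentration}: for every $\eta>0$ there is $C=C(\eta)$, independent of $N$ and of the starting state $x$, such that for each fixed $t\le t_N+O(1)$, $\mathbb{P}_x(|X_N(t)-Nf(t)|>C\sqrt N)\le\eta$, and likewise $\mathbb{P}_x(\sup_{a\le t\le a+1}|X_N(t)-Nf(t)|>C\sqrt N)\le\eta$ on any unit interval, with $f$ the fluid solution from $x/N$ and $C(\eta)\to\infty$ only as $\eta\to 0$. The essential feature is that $C$ does not grow with $t$; this is enforced by the contraction $g'(f^*)=-J<0$, which keeps fluctuations from accumulating once the trajectory is near $f^*$, and it is exactly what the Barbour--Brightwell--Luczak concentration inequalities deliver. (B) \emph{Stationary concentration}: for every $\delta>0$ there is $C_\delta$ with $\pi_N([Nf^*-C_\delta\sqrt N,Nf^*+C_\delta\sqrt N])\ge1-\delta$ for all $N$; this follows from (A) and stationarity, or directly from the explicit birth-and-death formula for $\pi_N$, which is log-concave to leading order and peaked at $\approx Nf^*$ with width $\asymp\sqrt N$.

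For the \emph{lower bound}, fix $\eta,\delta>0$, take $C=C(\eta)$ and $C_\delta$ as above, and choose $s$ with $\tfrac{1}{2} A(0)e^{Js}>\max\{C,C_\delta\}$. Starting from state $0$, ingredient (A) gives, with probability $1-\eta-o(1)$, that $X_N(t_N-s)\le Nf(t_N-s)+C\sqrt N=Nf^*-(A(0)e^{Js}(1+o(1))-C)\sqrt N<Nf^*-C_\delta\sqrt N$, an event of $\pi_N$-mass at most $\delta$ by (B). Hence $\rho_N(t_N-s)\ge1-\eta-\delta-o(1)$, and letting $N\to\infty$ and then $s\to\infty$ (so $\eta,\delta\to0$) yields $\lim_{s\to\infty}\liminf_{N\to\infty}\rho_N(t_N-s)=1$.

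For the \emph{upper bound}, it suffices to bound, uniformly in $x$, the probability that $X_N$ started from $x$ has not coupled with a stationary copy $\widetilde X_N$ by time $t_N+s$. By (A) at time $t_N+s/2$ (using that $A(x/N)$ is bounded) and by (B) applied to $\widetilde X_N$, both chains lie in the window $W_s:=[Nf^*-K_s\sqrt N,Nf^*+K_s\sqrt N]$ at time $t_N+s/2$ with probability $1-\varepsilon_1(s)$, where $K_s\to\infty$ slowly and $\varepsilon_1(s)\to0$. From that time on, couple the two chains by a reflection-type coupling inside the window, pairing up-steps of one chain with down-steps of the other as far as the rates allow. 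A short computation shows the gap $D_t:=\widetilde X_N(t)-X_N(t)$ then has drift $(b(\widetilde X_N(t))-b(X_N(t)))-\mu D_t\approx-JD_t$, where $b(y):=\lambda y(1-y/N)+\varepsilon(N-y)$, but \emph{diffusivity of order $N$} rather than of order $|D_t|$, which is what the monotone coupling would give. Under the monotone coupling the gap would take an additional time $\asymp t_N$ to close, spoiling the constant; here, instead, $D_t/\sqrt N$ behaves like an Ornstein--Uhlenbeck process, so from $D_{t_N+s/2}=O(K_s\sqrt N)$ it hits $0$ within an extra time that is bounded uniformly in $N$ and small compared with $s$, with probability $1-\varepsilon_2(s)$ where $\varepsilon_2(s)\to0$; throughout, both chains stay in a slightly enlarged window (in particular away from the non-sticky boundary), and once $D_t=0$ we switch to the synchronous coupling. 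Therefore $\rho_N(t_N+s)\le\varepsilon_1(s)+\varepsilon_2(s)+o(1)$, and $\lim_{s\to\infty}\limsup_{N\to\infty}\rho_N(t_N+s)=0$. The main obstacle is ingredient (A): upgrading the Kurtz-type fluid and Gaussian approximations to a concentration bound valid, with a fixed fluctuation constant, at times of order $\log N$ — the technical core of the method, resting on the contraction towards $f^*$; the reflection coupling inside the $\sqrt N$-window and the control of the boundary behaviour are then comparatively routine.
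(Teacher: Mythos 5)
Your overall architecture is the right one and matches the paper closely in spirit: the fluid equation with linearisation rate $J$ at the attractor $x^{\star}$, the $\sqrt N$-scale fluctuation window, stationary concentration around $Nx^{\star}$, and a coupling whose \emph{gap} has diffusivity of order $N$ so that it closes in $O(1)$ time once both copies are within $O(\sqrt N)$ of $Nx^{\star}$. The lower bound via ``deterministic solution still $\gg\sqrt N$ away at time $t_N-s$'' is also the paper's argument (the paper starts from $\bar x=\lfloor(x^{\star}+r/2)N\rfloor$ rather than $0$, but that is cosmetic; both give a bounded-below amplitude $A$).

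There is, however, a genuine gap in your ingredient~(A). You assert a \emph{uniform-in-starting-state} $\sqrt N$-scale concentration bound, valid at every $t\le t_N+O(1)$ with constant $C(\eta)$ independent of $N$ and $x$, and justify it by saying this is ``exactly what the Barbour--Brightwell--Luczak concentration inequalities deliver''. It is not. Their Theorem~3.3 (Theorem~\ref{Wconcentration} here) requires the Markovian coupling to be \emph{globally} Wasserstein-contractive, $\mathcal Ad(x,y)\le-\ell d(x,y)$ for \emph{all} $x,y$ in the state space, and the $\varepsilon$-SIS chain fails this away from $x^{\star}$: the drift of the independent-coupling gap, $(\lambda-\mu-\varepsilon)(z-w)-\tfrac{\lambda}{N}(z-w)(z+w)$, is only $\le -(J-2\lambda\eta)(z-w)$ once $w,z$ are in a small neighbourhood of $Nx^{\star}$; if $\lambda>\mu+\varepsilon$, the coefficient is actually positive near $0$, so the coupling is \emph{expanding} there. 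In other words, the contraction that ``keeps fluctuations from accumulating'' kicks in only near $x^{\star}$, not along the whole transient, so the BBL inequality gives $\sqrt N$-scale concentration only for starting states already close to $x^{\star}$ (this is exactly the paper's Lemma~\ref{const.follow}, which applies the inequality to the chain reflected at the boundary of $I(r)$ with $r<J/6\lambda$). For arbitrary starting states the paper only proves concentration at scale $N^{(1+h)/2}$ (Lemma~\ref{following}), not $\sqrt N$.

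The paper's workaround for exactly this obstacle is the three-phase decomposition you compress into one: a burn-in of duration $\frac{1-h}{2J}\log N$ during which the rough uniform bound suffices to land both copies within $O(N^{(1+h)/2})$ of $Nx^{\star}$ (Corollary~\ref{burn}); an intermediate contraction phase of duration $\frac{h}{2J}\log N+O(1)$ during which the coupling gap shrinks geometrically at rate $J-2\lambda\eta$ to below $\sqrt N$ (Lemma~\ref{med}); and only then the $O(1)$-time random-walk/OU comparison you describe (Lemma~\ref{final}, via Proposition~\ref{RWBound}). Your claim that once both chains are in a $K_s\sqrt N$-window the reflection coupling finishes in a further constant time is correct in itself (and essentially the paper's final phase, with the independent rather than reflection coupling --- either gives $O(N)$ diffusivity), but the step that gets you there uniformly over all starting states is precisely the one you have not proved. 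To make your version go through you would either have to prove (A) directly (which requires controlling the fluctuation variance through the non-contracting transient and is not a consequence of the tools you cite), or insert the paper's intermediate phase.
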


Before going further, we highlight a few points about the theorem. First,  Theorem~\ref{cutoff} implies that  for a very small $\varepsilon$, the cutoff time of the $\varepsilon$-SIS model is about $\frac{1}{2(\mu-\lambda)}\log(N)$. This is about twice as fast as the subcritical classical SIS model, which has a cutoff at $\frac{1}{\mu-\lambda}\log(N)$.  This finding does not appear to be at all a priori obvious: we will give an intuitive explanation for it after describing the proof in more detail at the end of the introduction.

Second, 
general conditions that guarantee a sequence of birth-and-death chains exhibits cutoff 
are given in Ding et al.~\cite{ding2010total}. 
They show that, if the sequence satisfies the so-called \emph{product condition}, that the \emph{relaxation times}
$t_{\rm{REL}}(N)$--defined to be one over the spectral gap of the chain--is of smaller order as $N$ goes to infinity than the \emph{mixing time} $t_{\rm{MIX}}(N)$--defined to be the first time the total variation distance to stationarity is less than $1/4$--then the chain exhibits cutoff with window of order at most $\sqrt{t_{\rm{REL}}(N)t_{\rm{MIX}}(N)}$. 
While it is likely possible in our setting  to verify the product condition, or related conditions, see for example the follow-up works \cite{Chen2013b, Chen2014a, Chen2015a},
we would still need to derive bounds on the mixing time, and the window size $\sqrt{t_{\rm{REL}}(N)t_{\rm{MIX}}(N)}$ is at best of order $\sqrt{\log(N)}$, because the relaxation time is at least one, and the mixing time is of order $\log(N)$.
Thus our Theorem~\ref{cutoff} is sharper than what would be obtained from these general results, by giving the constant for the cutoff time, and showing the window is of constant order.
Ding et al.~\cite{ding2010total} 
also show the window size given in their results is unimprovable for general birth-and-death chains, so our result is also interesting as a non-trivial example where the window is smaller than predicted by the general result.

We also mention in this vein of research the work of 
Basu et al.~\cite{basu2014characterization}, which gives analogous conditions for cutoff of reversible Markov chains on finite trees, and semi birth-and-death chains, and the very recent work of Salez~\cite{salez2023cutoff}, which gives a ``product-like'' condition for cutoff assuming that the Markov chain has ``non-negative curvature''. 
These works also give a cutoff window 
 of order at most $\sqrt{t_{\rm{MIX}}(N)}\cdot t_{\rm{REL}}(N)^\alpha$ for an $\alpha>0$, which,  as in the birth-and-death case, may not be sharp. Thus, in many applications precise information about the exact cutoff time and window size needs to be obtained by ``bare hands'' methods.

The technique we use for proving Theorem~\ref{cutoff} is adapted from Barbour et al.~\cite{barbour2022long}, where the authors developed a general approach to establishing the cutoff phenomenon for suitable chains by using couplings and concentration of measure inequalities; see also that paper for a comprehensive literature review of the cutoff phenomenon.  The approach has been used in other places such as Lopes and Luczak \cite{lopes2020extinction}, who derived a formula for the asymptotic distribution of the extinction time for the weaker of two competing SIS epidemics (while that result is not an example of the cutoff phenomenon, it has a similar flavour); and Eskenazis and Nestoridi \cite{eskenazis2020cutoff}, who established the cutoff phenomenon for the Bernoulli--Laplace urn model with $o(N)$ swaps. Thus, an additional contribution of our work is to provide another example of this approach, which we hope will serve to standardise it as a tool
for establishing cutoff.

A final contribution of this article is to bring the $\eps$-SIS model to the attention of researchers in applied probability, and so we next discuss the existing literature. Afterwards, the end of the introduction gives an overview of the proof of Theorem~\ref{cutoff}, and simultaneously the organisation of the paper.

\noindent{\bf Related literature.}
The $\varepsilon$-SIS model has appeared sporadically in the applied literature. The theoretical primer (aimed at applied researchers) of Keeling and Ross \cite{keeling2008methods} includes the additional~$\eps$ term in their definition of the (vanilla) SIS model. Stone et al. \cite{Stone2008} apply the $\eps$-SIS model to head lice epidemic data, after deriving closed-form analytic formulas for some stationary quantities, and Hill et al. \cite{Hill2010} use it to model obesity spread. Nieddu et al. \cite{nieddu2022characterizing}  derive analytic quantities of the model to understand the parameter space where the disease is endemic. 

The most systematic study of the model appears in a series of papers by
Van Mieghem, alone and with and a number of co-authors~
\cite{Achterberg2022, mieghem_2020, van2012epidemics, mieghem_wang_2020}.  
Van Mieghem~\cite{mieghem_2020} uses an exact expression for the mean prevalence (i.e., the expectation of $X_N$) in equilibrium to identify a sharp phase transition in this mean as $\lambda/\mu$ varies, for fixed $N$ and a fixed very small positive value of $\varepsilon$. A follow-up paper of Van Mieghem and Wang~\cite{mieghem_wang_2020} studies time-dependent behaviour in this regime. 
We remark here that known results about the supercritical classical logistic SIS model ($\lambda/\mu > 1$) help to explain this phase transition phenomenon.  
When $\lambda/\mu > 1$ and $\varepsilon$ is much smaller compared to $1/N$, the behaviour of the $\varepsilon$-SIS model is very similar to that of the usual logistic SIS model in the supercritical regime, except that, after reaching state $0$, the process restarts after the next self-infection, so that the mean waiting time in state $0$ is $(N\varepsilon)^{-1}$.  It is known (see, for instance, \cite{andersson_djehiche_1998}) that the duration of the supercritical logistic epidemic is asymptotically exponential with a mean whose leading term is $e^{\gamma N}$, with $\gamma = \log (\lambda / \mu) - 1 + \mu/\lambda$. If $(N\epsilon)^{-1}$ is much smaller than $e^{\gamma N}$, then the mean prevalence is of order $N$, as the $\varepsilon$-SIS model will be spending most of its time near the stable fixed point of the logistic equation at $1-\mu/\lambda$. On the other hand, if $(N\epsilon)^{-1}$ is much larger than $e^{\gamma N}$, then the mean prevalence is near zero, since then most of the time the epidemic will be extinct, waiting to restart. 

While papers~\cite{mieghem_2020,mieghem_wang_2020} assume that the population size $N$ is fixed, the regime considered there is qualitatively very similar to the case where $N$ is very large, $\varepsilon$ is exponentially small in $N$, and $\lambda, \mu$ fixed constants, and therefore very different to the regime considered in the present paper.
 

%


\noindent{\bf Proof and paper overview.}
 In Section~\ref{detsismodel} we provide a brief discussion of the deterministic $\varepsilon$-SIS model, represented by the differential equation given in~\eqref{epssis}. The solution to this differential equation provides an approximation for the scaled process $(X_N(t)/N)_{t\geq0}$ over an appropriate time scale, for large $N$.

 In Section~\ref{priorconcen}, we show in Lemma~\ref{following} that, for a (deterministic) period of time that is stretched exponential in $N$, with high probability, the scaled process $(X_N(t)/N)$ is at most $O(N^{-\frac{1-h}{2}})$, for some fixed $h\in(0,1),$ away from the solution of equation~\eqref{epssis}.  In addition, we show in Lemma~\ref{mean} that for large enough times $t$, the mean of $X_N(t)/N$ is within $O(N^{-1/2})$ of the stable fixed point $x^{\star}$  of~\eqref{epssis}, given explicitly at~\eqref{fixedpoint}.

In Section~\ref{coupling}, we obtain an upper bound on the mixing time of the $\varepsilon$-SIS model.
We show that by time $t_N+\xi$, where $\xi$ is a constant that is independent of any parameters of the model and the population size $N$, the distribution of the chain $X_N(t)$ is sufficiently close to its stationary distribution $\pi_N$. This is done by using the Markov chain coupling method, where the coupling used consists of running two copies of the chain independently, over three phases. During the \emph{burn-in} phase, the concentration results of Section~\ref{priorconcen} imply that by time $\frac{1-h}{2J}\log(N)$, both copies are within   $O(N^{\frac{1+h}{2}})$ of $x^{\star}N$ with high probability; see Corollary~\ref{burn}.  Given that this event has occurred, during the \emph{intermediate} phase, the copies come within $O(\sqrt{N})$ of each other after an additional $\frac{h}{2J}\log(N)+\xi/2$ time with high probability; see Lemma~\ref{med}. In the \emph{final} phase, given the copies are within $\sqrt{N}$ of each other, 
their distance may be compared to an unbiased random walk, which hits $0$ from a state of order $\sqrt{N}$ within a constant time, given that the transition rate is of order $N$; see Lemma~\ref{final}.

In Section~\ref{lowerSIS}, we obtain a lower bound on the mixing time. We first show an improved version of the concentration inequality given in Section~\ref{priorconcen}, which only holds if the starting state of the chain $X_N(t)$ is in a ``good set''. We then break the proof of the lower bound into two parts. First, we combine the rapidly mixing result from the previous section and the improved concentration inequality to show the stationary distribution $\pi_N$ is concentrated within a ball of order $\sqrt{N}$ around $x^{\star}N$. Then we use concentration around the deterministic solution to show that most of the mass of $X_N(t_N-\xi)$ is located outside that ball, and thus by the time $t_N-\xi$ the distribution of the chain cannot be close to stationary.  

Finally, as mentioned already, for very small $\eps$ the $\varepsilon$-SIS model mixes twice as fast as the classical subcritical SIS model. To see why this is the case, for the $\eps$-SIS model, the (non-normalised) process arrives within distance $\sqrt{N}$ of the fixed point $x^{\star}N$, where most of the mass of the stationary distribution lies, after time of order about $\frac{1}{2(\mu-\lambda)}\log(N)$. After that, the stochastic model can be shown to do better than the deterministic one, as we can compare its behaviour to that of an unbiased random walk in continuous time taking steps at rate of order $N$, since the transition rates remain of order $N$ throughout. Such a random walk will hit $0$ in constant time with high probability. 

For the classical subcritical SIS model, the process also arrives within $\sqrt{N}$ of $0$ after about $\frac{1}{2(\mu-\lambda)}\log(N)$ time. However, at that point events happen at rate of order only $\sqrt{N}$ and decrease as we get closer to $0$, so any comparison with an unbiased random walk would be useless. Instead, we need to keep taking advantage of the negative drift right until the end, first following the differential equation closely, and then approximating by a linear birth-and-death process.

\section{Deterministic version of the \texorpdfstring{$\varepsilon$-}-SIS model}\label{detsismodel}
In this section, we consider the deterministic version of the logistic SIS model with self-infection. The model represents a spreading epidemic governed by the autonomous ordinary differential equation (ODE) 
\begin{equation}\label{epssis}
    \frac{dx}{dt}=\lambda x(1-x)+\varepsilon(1-x)-\mu x, \quad t\geq0,
\end{equation}
where $\lambda, \mu, \varepsilon>0$. The proportion of infected people at time $t$ is modeled by $x(t)$. When $\varepsilon=0$, we recover the classical deterministic logistic SIS model. 

We solve the equation $f(x)=\lambda x(1-x)+\varepsilon(1-x)-\mu x=0$ to identify the \textit{fixed points}. There are two solutions to the equation, however, since only non-negative solutions have a biological meaning, we require $ x(t)\geq 0$ for all $t\geq0$ in \eqref{epssis}. The only fixed point that falls in this range is  
\begin{equation}\label{fixedpoint}
    x^{\star}=\frac{1}{2\lambda}\left((\lambda-\mu-\varepsilon)+\sqrt{(\lambda-\mu-\varepsilon)^2+4\lambda\varepsilon}\right):=\frac{\lambda-\mu-\varepsilon+J}{2\lambda},
\end{equation}
where we recall from the statement of Theorem~\ref{cutoff} that $J= \sqrt{(\lambda-\mu-\varepsilon)^2+4\lambda\varepsilon}$.
Since $x(t)$ models the proportion of infected people, $x^{\star}=(\lambda-\mu-\varepsilon+J)/2\lambda\leq1$, as one would expect. When $\varepsilon=0$, the solution simplifies to $\frac{\lambda-\mu}{\lambda}$, if $\lambda>\mu$, whereas if $\lambda<\mu$ the solution degenerates to zero. The other solution is 
\begin{equation*}
    x_1^{\star}=\frac{1}{2\lambda}\left((\lambda-\mu-\varepsilon)-\sqrt{(\lambda-\mu-\varepsilon)^2+4\lambda\varepsilon}\right):=\frac{\lambda-\mu-\varepsilon-J}{2\lambda}, 
\end{equation*}
which is non-positive and degenerates to zero when $\varepsilon=0$, provided $\lambda>\mu$. The following proposition is easily verified by simple calculations and standard results, for example, Theorem 2.4.2 in \cite{boyce2004elementary}.
\begin{proposition}\label{detsol}
    The differential  equation \eqref{epssis} subject to the initial condition $x(0)=\alpha\in[0,1]$ has an explicit solution
    \begin{equation}\label{parsolepssis}
    x(t)=x^{\star}+\frac{\frac{J}{\lambda}(\alpha-x^{\star})}{(\alpha-x_1^{\star})e^{tJ}-(\alpha-x^{\star})}, \quad \text{where} \quad J=\sqrt{(\lambda-\mu-\varepsilon)^2+4\lambda\varepsilon}.
\end{equation}
\end{proposition}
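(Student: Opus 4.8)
The plan is to recognise \eqref{epssis} as an autonomous scalar ODE whose right-hand side is a quadratic in $x$, and to solve it by separation of variables after factoring. Writing $f(x) = \lambda x(1-x) + \varepsilon(1-x) - \mu x = -\lambda x^2 + (\lambda-\mu-\varepsilon)x + \varepsilon$, I would first check that $x^{\star}$ and $x_1^{\star}$ are precisely the two roots of $f$, so that $f(x) = -\lambda(x-x^{\star})(x-x_1^{\star})$, and record the identity $x^{\star} - x_1^{\star} = J/\lambda$ that follows immediately from \eqref{fixedpoint}. Since $\varepsilon>0$ forces $x_1^{\star}<0 \le \alpha$ while $x^{\star}\le 1$, one checks that along any solution started in $[0,1]$ the factor $x - x_1^{\star}$ stays positive; this is what guarantees the solution exists on all of $[0,\infty)$ rather than blowing up.

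If $\alpha = x^{\star}$, the constant function $x\equiv x^{\star}$ solves the equation and the claimed formula collapses to $x^{\star}$, so assume $\alpha\ne x^{\star}$ (note $\alpha = x_1^{\star}$ is impossible for $\alpha\in[0,1]$ since $x_1^{\star}<0$), whence $f(\alpha)\ne 0$. Separating variables gives $\int \frac{dx}{(x-x^{\star})(x-x_1^{\star})} = -\lambda\int dt$, and using the partial-fraction identity $\frac{1}{(x-x^{\star})(x-x_1^{\star})} = \frac{\lambda}{J}\bigl(\frac{1}{x-x^{\star}} - \frac{1}{x-x_1^{\star}}\bigr)$ and integrating yields $\frac{x-x^{\star}}{x-x_1^{\star}} = A e^{-Jt}$ with $A = \frac{\alpha-x^{\star}}{\alpha-x_1^{\star}}$ fixed by the initial condition. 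Solving this linear relation for $x$, clearing denominators by multiplying through by $e^{Jt}(\alpha-x_1^{\star})$, and using $x^{\star} - x_1^{\star} = J/\lambda$ to rewrite the numerator as $x^{\star}\bigl[(\alpha-x_1^{\star})e^{tJ} - (\alpha-x^{\star})\bigr] + \tfrac{J}{\lambda}(\alpha-x^{\star})$, produces exactly \eqref{parsolepssis}.

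The cleanest way to make this fully rigorous, however, is to reverse the logic: differentiate the right-hand side of \eqref{parsolepssis}, verify that its derivative equals $f$ evaluated at the same expression, check that its value at $t=0$ is $\alpha$, and invoke the Picard--Lindel\"of uniqueness theorem (the right-hand side of \eqref{epssis} is a polynomial, hence locally Lipschitz), as in Theorem 2.4.2 of \cite{boyce2004elementary}. I would present the separation-of-variables computation for motivation and then appeal to this verification for the actual proof. There is no genuine obstacle here; the only bookkeeping worth being careful about is the algebraic simplification of the numerator and the remark that the denominator $(\alpha-x_1^{\star})e^{tJ} - (\alpha-x^{\star})$ is strictly positive for every $t\ge 0$ — it equals $J/\lambda>0$ at $t=0$ and has positive derivative $(\alpha-x_1^{\star})Je^{tJ}>0$ — so that \eqref{parsolepssis} is well defined on the whole half-line.
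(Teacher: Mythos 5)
Your proposal is correct and matches the paper's intended approach: the paper itself offers no detailed proof, remarking only that the result is ``easily verified by simple calculations and standard results, for example, Theorem 2.4.2 in \cite{boyce2004elementary}'', which is precisely the separation-of-variables computation plus the existence-and-uniqueness appeal that you carry out. Your algebra (factoring $f(x)=-\lambda(x-x^{\star})(x-x_1^{\star})$, the identity $x^{\star}-x_1^{\star}=J/\lambda$, the partial-fraction integration, and the check that the denominator stays positive) is accurate and simply fills in the details the paper leaves to the reader.
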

It is easy to see from \eqref{parsolepssis} that, as $t\to\infty$, the solution monotonically approaches $x^{\star}$ from below when $0\leq \alpha<x^{\star}$, whereas it monotonically approaches $x^{\star}$ from above if $x^{\star}<\alpha\leq 1$. This can also be seen from the phase diagram. The graph of $f(x)$ versus $x$ is a downward parabola, so $dx/dt>0$ when $0\leq x<x^{\star}$, and $dx/dt<0$, if $x^{\star}<x\leq1$. This implies that regardless of the starting point, any solution of \eqref{epssis} approaches $x^{\star}$ monotonically as $t\to\infty$, so the fixed point $x^{\star}$ is 
\textit{globally attractive}, over the set $[0,1]$. 

\section{Concentration around the deterministic process}\label{priorconcen}
For the rest of this paper, we use $x(t)$ to denote the solution to the governing equation \eqref{epssis} with initial state $x(0)=X_N(0)/N$. There are two main results of this section. We first show that the scaled process $X_N(t)/N$ closely follows the solution $x(t)$ defined in \eqref{parsolepssis}. To be precise, we prove the following result.
\begin{lemma}\label{following}
There exist positive constants $C_1, C_2$ depending only on  $\lambda, \mu$ and $\varepsilon$ such that for all $h\in(0,1)$, the following relation holds for all $N$ large enough (depending on $\lambda, \mu, \varepsilon$ and $h$; see~\eqref{eq:11}),
\begin{equation}\label{lowrangecon}
    \mathbb{P}\left(\sup_{t\leq t_{\rm{follow}}}\left\lvert\frac{X_N(t)}{N}-x(t)\right\rvert>C_2N^{-\frac{1-h}{2}}\right)\leq4e^{-C_1N^h},
\end{equation}
where 
\begin{equation}\label{tfollow}
    t_{\rm{follow}}\equiv t_{\rm{follow}}(N, h):=\mbox{$\frac{1}{J}$}\lceil e^{C_1N^{h}}\rceil.
\end{equation}
\end{lemma}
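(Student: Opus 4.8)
The plan is to exploit the dissipativity of \eqref{epssis}: a martingale decomposition of $Y_N:=X_N/N$ reduces the claim to a concentration bound for a martingale, and the exponential contraction of the flow towards $x^\star$ then lets one iterate a constant-time estimate over a stretched-exponential horizon without a Gr\"onwall blow-up.

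\textbf{Step 1 (semimartingale decomposition).} Writing $Y_N(t):=X_N(t)/N\in[0,1]$ and $f(y):=\lambda y(1-y)+\varepsilon(1-y)-\mu y$ for the vector field of \eqref{epssis}, one notes that the birth rate $\lambda x(1-x/N)+\varepsilon(N-x)$ and the death rate $\mu x$, divided by $N$, are \emph{exactly} $\lambda Y_N(1-Y_N)+\varepsilon(1-Y_N)$ and $\mu Y_N$. Hence Dynkin's formula applied to $x\mapsto x/N$ yields, with no $O(1/N)$ drift correction,
\[
 Y_N(t)=Y_N(0)+\int_0^t f\bigl(Y_N(s)\bigr)\,ds+M_N(t),
\]
where $M_N$ is a mean-zero martingale with jumps of size $1/N$ and predictable quadratic variation $\langle M_N\rangle_t=N^{-2}\int_0^t\bigl(\lambda X_N(s)(1-X_N(s)/N)+\varepsilon(N-X_N(s))+\mu X_N(s)\bigr)\,ds\le Ct/N$, with $C=C(\lambda,\mu,\varepsilon)$ bounding the total jump rate over $N$ on $\{0,\dots,N\}$. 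Since $f$ is Lipschitz on $[0,1]$, with constant $L=L(\lambda,\mu,\varepsilon)$ say, and $Y_N(t),x(t)\in[0,1]$, the error $D_N:=Y_N-x$ obeys $D_N(t)=\int_0^t\bigl(f(Y_N(s))-f(x(s))\bigr)\,ds+M_N(t)$.

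\textbf{Step 2 (contraction of the flow).} A naive Gr\"onwall estimate of $D_N$ on $[0,t_{\rm{follow}}]$ is fatal, producing $e^{Lt_{\rm{follow}}}$, doubly exponential in $N$. Instead, using \eqref{parsolepssis} together with the global attractivity of $x^\star$ from Section~\ref{detsismodel} and the computation $f'(x^\star)=-J<0$, fix constants $\kappa_0=\kappa_0(\lambda,\mu,\varepsilon)\in(0,\min(x^\star,1-x^\star))$, $\tau=\tau(\lambda,\mu,\varepsilon)>0$ with $J\tau\ge2$, and $T_1=T_1(\lambda,\mu,\varepsilon)$ such that: on $I:=(x^\star-\kappa_0,x^\star+\kappa_0)$ one has $f'\le-J/2$, and $I$ is forward-invariant for \eqref{epssis} (since $f>0$ on $(0,x^\star)$ and $f<0$ on $(x^\star,1)$); and every solution of \eqref{epssis} with $x(0)\in[0,1]$ lies in $(x^\star-\kappa_0/2,x^\star+\kappa_0/2)$ by time $T_1$ (uniformity in $x(0)$ being visible from the explicit formula \eqref{parsolepssis}). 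Consequently, any two solutions of \eqref{epssis} that lie in $I$ at a common time $s$ have their distance at $s+\tau$ at most $\rho:=e^{-J\tau/2}<1$ times their distance at $s$.

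\textbf{Step 3 (two-phase union bound).} Set $\eta:=N^{-(1-h)/2}$. In the \emph{burn-in phase} $[0,T_1]$, a standard Freedman/Bernstein-type martingale inequality applied to $M_N$ (using $\langle M_N\rangle_{T_1}\le CT_1/N$ and jump size $1/N$) gives $\mathbb{P}\bigl(\sup_{t\le T_1}|M_N(t)|>\eta\bigr)\le 2e^{-cN^h}$ for a constant $c=c(\lambda,\mu,\varepsilon)>0$ and all $N$ large; on the complement, Gr\"onwall over the constant-length interval $[0,T_1]$ gives $\sup_{t\le T_1}|D_N(t)|\le \eta e^{LT_1}$, so $Y_N(T_1)\in I$ for $N$ large. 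In the \emph{stabilisation phase}, partition $[T_1,t_{\rm{follow}}]$ into $K=O(e^{C_1N^h})$ intervals of length $\tau$, where $C_1>0$ depending only on $\lambda,\mu,\varepsilon$ is fixed at the end, write $s_k:=T_1+k\tau$, $e_k:=|D_N(s_k)|$, and let $\widetilde x_k$ solve \eqref{epssis} from $Y_N(s_k)$ at time $s_k$. Inductively, if $Y_N(s_k)\in I$ and $e_k$ is at most a fixed multiple of $\eta$, then on the event that the increment of $M_N$ over $[s_k,s_{k+1}]$ has supremum $\le\eta$ --- of probability $\ge1-2e^{-cN^h}$ by the same martingale inequality, with no conditioning needed since the predictable-quadratic-variation increment is $\le C\tau/N$ deterministically --- Gr\"onwall over $[s_k,s_{k+1}]$ bounds $\sup_{[s_k,s_{k+1}]}|Y_N-\widetilde x_k|$ by $\eta e^{L\tau}$, while the contraction of Step~2 gives $|\widetilde x_k(s_{k+1})-x(s_{k+1})|\le\rho e_k$ (since $\widetilde x_k$ and $x$ stay in $I$ by forward-invariance); hence $e_{k+1}\le\rho e_k+\eta e^{L\tau}$, so $e_k\le C_2\eta$ for all $k$ with an explicit $C_2=C_2(\lambda,\mu,\varepsilon)$, and $Y_N$ stays in $I$ throughout once $C_2\eta<\kappa_0/2$, i.e.\ $N$ is large. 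Intersecting the burn-in event with the $K$ stabilisation events, and extending the bound $|D_N|\le C_2\eta$ to the interiors of the sub-intervals (enlarging $C_2$), one obtains $\sup_{t\le t_{\rm{follow}}}|D_N(t)|\le C_2 N^{-(1-h)/2}$ off an event of probability at most $(1+K)\cdot2e^{-cN^h}\le 4e^{-C_1N^h}$, the last step holding as soon as $C_1$ is chosen $\le c/2$; this is \eqref{lowrangecon}, with $t_{\rm{follow}}=\tfrac1J\lceil e^{C_1N^h}\rceil$ exactly as in \eqref{tfollow}.

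\textbf{Main obstacle.} The crux is the tension between the stretched-exponential time horizon and the accumulation of noise: no cumulative Gr\"onwall growth can be afforded, so the argument must convert the per-interval martingale fluctuation into a geometrically decaying contribution via the contraction $f'(x^\star)=-J<0$, and then balance the per-interval exponent $\asymp N^h$ of the Freedman bound against the number $\asymp e^{C_1N^h}$ of intervals by taking $C_1$ small. The remaining work --- the exact Dynkin computation and quadratic-variation bound, the uniformity in the arbitrary starting state $X_N(0)$, the uniform-in-$x(0)$ entry-time claim drawn from \eqref{parsolepssis}, and carrying a large enough buffer that $Y_N$ (not only the deterministic trajectory $x$) remains in the dissipative region $I$ across all $\sim e^{C_1N^h}$ sub-intervals --- is routine but must be executed with care.
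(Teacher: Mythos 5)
Your proposal is correct but takes a genuinely different route from the paper. The paper rewrites the dynamics in centred variables $Y_N=X_N/N-x^\star$, $y=x-x^\star$, obtains the \emph{variation-of-constants} representations
$y(t)=e^{-Jt}y(0)-\int_0^t\lambda e^{-J(t-s)}y^2(s)\,ds$ and $Y_N(t)=e^{-Jt}Y_N(0)-\int_0^t\lambda e^{-J(t-s)}Y_N^2(s)\,ds+\int_0^t e^{-J(t-s)}\,dM_N(s)$ (Lemmas~\ref{lem:altrepxt} and~\ref{yrep}), and then invokes the single stopping-time estimate of \cite[Lemma~7]{lopes2020extinction} (Lemma~\ref{expmartingalebound}) for the \emph{exponentially discounted} martingale $\int_0^t e^{-J(t-s)}\,dM_N(s)$; the discounting built into that lemma is precisely what makes the bound survive a stretched-exponential time horizon, after which a single Gr\"onwall bootstrap (via the auxiliary times $T_1,T_2,T_3$) handles the nonlinear $\lambda y^2$ term. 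You instead keep the undiscounted Dynkin martingale, split $[0,t_{\rm follow}]$ into a constant-length burn-in and $K\asymp e^{C_1 N^h}$ blocks of fixed length $\tau$, apply a Freedman/Bernstein tail bound on each block (with the deterministic per-block bound on $\langle M_N\rangle$ letting the conditional estimate be uniform), run Gr\"onwall only over length $\tau$, and convert the accumulated noise into a geometrically summable series through the contraction $f'\le -J/2$ on a fixed neighbourhood $I$ of $x^\star$. Both arguments rest on the same dissipativity $f'(x^\star)=-J$, but you realise it as strict per-interval contraction of the deterministic flow and pay for it with a union bound over exponentially many blocks, balancing the per-block exponent $\asymp N^h$ against $K$ by taking $C_1$ small; the paper realises it once and for all in the $e^{-J(t-s)}$ kernel, at the cost of importing the nontrivial Lemma~7 of~\cite{lopes2020extinction}. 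Your route is more elementary and self-contained; the paper's is more compact and gives the constants more explicitly. The minor points you flag as routine (uniformity in $X_N(0)$, entry time from~\eqref{parsolepssis}, keeping $Y_N$ rather than only $x$ in $I$, extending to sub-interval interiors) are indeed all straightforward and handled correctly in your sketch.
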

As will be seen in the proof, it is enough to consider $h$ fixed (say equal to $1/2$), 
so the dependence of~$N$ and $t_{\rm{follow}}$ on $h$ is ultimately not important.

Our other result in this section controls the mean of $X_N(t)/N$, and is used in the proof of the lower bound on the mixing time in Section \ref{lowerSIS}.

\begin{lemma}\label{mean}
Let $\mathbb{E}_{x_0}$ be the expectation given the starting state $X_N(0)=x_0$. The following two statements hold:
\begin{enumerate}
    \item[(1)] There exists a positive constant $K_1$, depending only on $\lambda,$ $\mu$ and $\varepsilon$, such that, for any $c\in\mathbb{R}$ and $N$ large enough (depending on $\lambda,\mu,\varepsilon$, and $c$),  
    \begin{equation*}
        \left\lvert \mathbb{E}_{x_0}\left(X_N\left(\mbox{$\frac{1}{2J}$}\log(N)+c\right)\right)-x^{\star}N \right\rvert\leq K_1 e^{-Jc}\sqrt{N}.
    \end{equation*}
    \item[(2)]  Suppose that that $x_0/N-x^{\star} > \alpha$, for some $\alpha > 0$. Then there exists a positive constant $K_2$, depending on $\lambda, \mu, \varepsilon$ and $\alpha$, such that, for any $c\in\mathbb{R}$ and $N$ large enough (depending on $\lambda, \mu, \varepsilon, \alpha$, and $c$), 
    \begin{equation*}
        K_2 e^{-Jc}\sqrt{N}\leq \mathbb{E}_{x_0}\left(X_N\left(\mbox{$\frac{1}{2J}$}\log(N)+c\right)\right)-x^{\star}N.
    \end{equation*}
\end{enumerate}
\end{lemma}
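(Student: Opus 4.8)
The plan is to control the scaled mean $\bar y(t):=\mathbb{E}_{x_0}[X_N(t)]/N$ by comparing it with the ODE solution $x(t)$ of Proposition~\ref{detsol} and its limit $x^{\star}$. Applying the generator of $X_N$ to the identity function and dividing by $N$ gives the exact identity (exact since $f$ is quadratic)
\begin{equation*}
\frac{d\bar y}{dt}=\mathbb{E}_{x_0}\!\big[f(X_N(t)/N)\big]=f(\bar y(t))-\lambda\, v(t),\qquad v(t):=\operatorname{Var}_{x_0}(X_N(t)/N),
\end{equation*}
where $f(y):=\lambda y(1-y)+\varepsilon(1-y)-\mu y$ is the right-hand side of~\eqref{epssis}. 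So $\bar y$ solves~\eqref{epssis} up to the perturbation $-\lambda v(t)$, and, writing $u(t):=\bar y(t)-x(t)$ (note $u(0)=0$, as both objects start from $x_0/N$), it remains to bound $v(t)$, then $u(t)$, and finally the deterministic gap $x(t)-x^{\star}$.

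For the variance, fix $h\in(0,\tfrac12)$. On the high-probability event of Lemma~\ref{following} we have $(X_N(t)/N-x(t))^2\le C_2^2N^{-(1-h)}$ for all $t\le t_{\mathrm{follow}}$, while on its complement (probability $\le 4e^{-C_1N^h}$) it is at most $1$ since $X_N/N\in[0,1]$; as $v(t)\le\mathbb{E}_{x_0}[(X_N(t)/N-x(t))^2]$, this gives $v(t)\le V_N:=C_2^2N^{-(1-h)}+4e^{-C_1N^h}$ on $[0,t_{\mathrm{follow}}]$, with $NV_N=o(\sqrt N)$. For $u$, subtracting the two ODEs gives $u'=f'(\eta)u-\lambda v$ with $\eta(t)$ between $\bar y(t)$ and $x(t)$. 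Since $f'(x^{\star})=-J<0$ and $f$ is concave, pick $\delta>0$ depending only on $\lambda,\mu,\varepsilon$ with $f'\le-J/2$ on $[x^{\star}-2\delta,x^{\star}+2\delta]$; from~\eqref{parsolepssis} one has $|x(t)-x^{\star}|\le \frac{J/\lambda}{|x_1^{\star}|e^{tJ}-1}$ uniformly over $x(0)\in[0,1]$, so there is a finite $t_\delta$, depending only on $\lambda,\mu,\varepsilon$, with $|x(t)-x^{\star}|\le\delta$ for $t\ge t_\delta$. On $[0,t_\delta]$, bounding $|f'|\le\lambda+\mu+\varepsilon$ and using Gr\"onwall gives $|u(t_\delta)|\le C_\delta V_N$; for $t\ge t_\delta$, so long as $|u(t)|\le\delta$ we have $\eta(t)\in[x^{\star}-2\delta,x^{\star}+2\delta]$ and hence $\frac{d}{dt}|u|\le-\tfrac J2|u|+\lambda V_N$, so a continuity/bootstrap argument (valid once $V_N$ is small, i.e.\ $N$ large) yields $|u(t)|\le C'' V_N$ throughout $[0,t_{\mathrm{follow}}]$, with $C''$ depending only on $\lambda,\mu,\varepsilon$.

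Now set $T=\tfrac1{2J}\log N+c$, so $e^{TJ}=\sqrt N\,e^{Jc}$ and $T\in[t_\delta,t_{\mathrm{follow}}]$ for $N$ large (given $c$). For part~(1), $|x(T)-x^{\star}|\le\frac{J/\lambda}{|x_1^{\star}|\sqrt N e^{Jc}-1}\le\frac{2J}{\lambda|x_1^{\star}|}\cdot\frac{e^{-Jc}}{\sqrt N}$ for $N$ large, so
\begin{equation*}
\big|\mathbb{E}_{x_0}[X_N(T)]-x^{\star}N\big|=N|\bar y(T)-x^{\star}|\le N|u(T)|+N|x(T)-x^{\star}|\le C''NV_N+\tfrac{2J}{\lambda|x_1^{\star}|}e^{-Jc}\sqrt N ,
\end{equation*}
and since $C''NV_N=o(\sqrt N)$ it is $\le e^{-Jc}\sqrt N$ for $N$ large (depending on $c$), giving part~(1) with $K_1=1+\tfrac{2J}{\lambda|x_1^{\star}|}$. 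For part~(2), when $x_0/N-x^{\star}>\alpha$ the formula~\eqref{parsolepssis} gives $x(T)-x^{\star}\ge\frac{(J/\lambda)\alpha}{(1-x_1^{\star})\sqrt N e^{Jc}}$ (using $x_1^{\star}<0$ and $x_0/N\le1$), hence $\bar y(T)-x^{\star}\ge\frac{J\alpha}{\lambda(1-x_1^{\star})}\cdot\frac{e^{-Jc}}{\sqrt N}-C''V_N\ge\frac{J\alpha}{2\lambda(1-x_1^{\star})}\cdot\frac{e^{-Jc}}{\sqrt N}$ for $N$ large, which is part~(2) with $K_2=\frac{J\alpha}{2\lambda(1-x_1^{\star})}$.

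The main obstacle is the control of $u(t)$ over the lengthening interval $[0,T]$: because $\sup_{[0,1]}|f'|=\lambda+\mu+\varepsilon>J$ (as $(\lambda+\mu+\varepsilon)^2-J^2=4\lambda\mu>0$), a single Gr\"onwall bound grows faster than the $e^{-TJ}=e^{-Jc}/\sqrt N$ decay of $x(T)-x^{\star}$, so one must peel off a fixed initial window $[0,t_\delta]$ on which every trajectory reaches a neighbourhood of $x^{\star}$ and then exploit the local contraction rate $\approx J$ there, with a bootstrap to ensure that $\bar y$ (not merely $x$) stays in that neighbourhood. A secondary subtlety, needed so that $K_1$ does not depend on the initial state $x_0$, is that the decay bound $|x(t)-x^{\star}|\le\frac{J/\lambda}{|x_1^{\star}|e^{tJ}-1}$ read off from~\eqref{parsolepssis} is uniform over $x(0)\in[0,1]$.
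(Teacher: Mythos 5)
Your proof is correct and shares the paper's key starting point — that $\bar y(t)=\mathbb{E}_{x_0}[X_N(t)]/N$ satisfies the exact ODE $\bar y'=f(\bar y)-\lambda v(t)$ with $v(t)=\mathrm{Var}_{x_0}(X_N(t)/N)$, with the variance bounded via Lemma~\ref{following} — but you close the argument with a genuinely different mechanism. The paper sandwiches $u(t)=\mathbb{E}_{x_0}(Y_N(t))$ between explicit closed-form solutions $z(t)$ and $y(t)$ of two logistic ODEs (the exact one, and a copy perturbed by a constant $-C^*N^{-(1-h)}$), invoking the differential-inequality comparison theorem, then reads off the bounds by hand from the explicit formulas and their monotonicity in $y_0$. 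You instead bound the single quantity $|\bar y(t)-x(t)|$ directly by Gr\"onwall, and because the global Lipschitz constant $\sup_{[0,1]}|f'|=\lambda+\mu+\varepsilon$ exceeds $J$, you are forced into a two-stage argument: a crude Gr\"onwall over a fixed window $[0,t_\delta]$ (where $t_\delta$ is chosen, uniformly in $x(0)\in[0,1]$ via the explicit bound $|x(t)-x^\star|\le (J/\lambda)/(|x_1^\star|e^{tJ}-1)$, so that $x(t)$ has entered a $\delta$-neighbourhood of $x^\star$), followed on $[t_\delta,T]$ by a contraction estimate $\frac{d}{dt}|u|\le-\tfrac{J}{2}|u|+\lambda V_N$ valid only while $|u|\le\delta$, which therefore needs a bootstrap. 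What the paper's route buys is the avoidance of that bootstrap: the comparison theorem holds globally in time, and the auxiliary ODEs being exactly solvable means the $e^{-Jt}$ decay is exhibited without ever having to localize. What your route buys is that it never requires solving the perturbed Riccati equation, only the unperturbed one (Proposition~\ref{detsol}); the cost is the care needed around the bootstrap and around the uniform-in-$x_0$ choice of $t_\delta$, both of which you have correctly identified and handled. The resulting constants ($K_1=1+\tfrac{2J}{\lambda|x_1^\star|}$, $K_2=\tfrac{J\alpha}{2\lambda(1-x_1^\star)}$) are explicit and have the right parameter dependence, and your observation that $(\lambda+\mu+\varepsilon)^2-J^2=4\lambda\mu>0$, so the naive one-shot Gr\"onwall would lose, is exactly the right diagnosis of why the two-stage structure (or, in the paper, the explicit comparison solutions) is unavoidable.
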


\subsection{Proof of Lemma~\ref{following}}
We study the centred processes $Y_N(t):=X_N(t)/N - x^{\star}$ and $y(t):=x(t)-x^{\star}$,  with $x(0)=X_N(0)/N$. 
The overall strategy of the proof is to give an integral representation of the deterministic process $y(t)$ (Lemma~\ref{lem:altrepxt}), which can be compared to a  similar representation of $Y_N(t)$, but with an addition
of a martingale term
 (Lemma~\ref{yrep}). From here, the result essentially follows by controlling the martingale term (Lemma~\ref{T_1bound}), using a simplified version of \cite[Lemma~7]{lopes2020extinction} (stated here in Lemma~\ref{expmartingalebound})
 and then applying Gr\"onwall’s lemma.

We first give a representation of $y(t)$.
\begin{lemma}\label{lem:altrepxt}
    In the notation just defined, we have 
    \begin{equation}\label{yint}
        y(t)=y(0)-\int_0^t \lambda y^2(s)+Jy(s)\,ds,
    \end{equation}
    and 
    \begin{equation}\label{expint}
        y(t)=e^{-Jt}y(0)-\int_0^t \lambda e^{-J(t-s)} y^2(s)\,ds.
    \end{equation}
\end{lemma}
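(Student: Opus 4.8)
The plan is to pass from the ODE \eqref{epssis} for $x(t)$ to an ODE for the centred process $y(t)=x(t)-x^{\star}$ and then integrate. Write $f(x)=\lambda x(1-x)+\varepsilon(1-x)-\mu x$ for the right-hand side of \eqref{epssis}, so that $f(x^{\star})=0$ by the definition \eqref{fixedpoint} of $x^{\star}$. Substituting $x=x^{\star}+y$ and expanding gives
\[
\frac{dy}{dt}=f(x^{\star}+y)=f(x^{\star})+\bigl(\lambda(1-2x^{\star})-\mu-\varepsilon\bigr)y-\lambda y^{2}.
\]
The constant term vanishes, and the key (elementary) algebraic point is that the linear coefficient is exactly $-J$: since $2\lambda x^{\star}=\lambda-\mu-\varepsilon+J$ by \eqref{fixedpoint}, we get $\lambda(1-2x^{\star})-\mu-\varepsilon=\lambda-(\lambda-\mu-\varepsilon+J)-\mu-\varepsilon=-J$. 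Hence $y$ solves $y'=-Jy-\lambda y^{2}$ on $[0,\infty)$, the solution being well defined and continuous there by Proposition~\ref{detsol}.

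Integrating this identity from $0$ to $t$ immediately yields \eqref{yint}. For \eqref{expint}, I would rewrite the equation as $y'+Jy=-\lambda y^{2}$ and apply the integrating factor $e^{Jt}$, so that $\frac{d}{dt}\bigl(e^{Jt}y(t)\bigr)=-\lambda e^{Jt}y^{2}(t)$; integrating from $0$ to $t$ and multiplying through by $e^{-Jt}$ gives \eqref{expint}. (Equivalently, \eqref{expint} follows from \eqref{yint} by Duhamel's/variation-of-constants formula, or one may simply differentiate the claimed right-hand side of \eqref{expint} and check that it satisfies $y'=-Jy-\lambda y^{2}$ with the same initial value $y(0)=x(0)-x^{\star}$.)

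There is no substantive obstacle: the whole argument is a short computation. The only point worth double-checking carefully is the value and sign of the linear coefficient, because the exponential rate $J$ that drives the concentration estimates in the rest of the section depends on it; this is precisely the identity $\lambda(1-2x^{\star})-\mu-\varepsilon=-J$ established above, and it is consistent with the factor $e^{tJ}$ appearing in the denominator of the explicit solution \eqref{parsolepssis}.
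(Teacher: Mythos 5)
Your proposal is correct and follows essentially the same route as the paper: both derive the ODE $y'=-Jy-\lambda y^2$ (the paper by factoring $f(x)=-\lambda(x-x^{\star})(x-x_1^{\star})$, you by expanding $f(x^{\star}+y)$ and identifying the linear coefficient as $-J$), then integrate directly for \eqref{yint} and apply the integrating factor $e^{Jt}$ for \eqref{expint}.
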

\begin{proof}
    Using the definition of $x^{\star}$, the governing equation \eqref{epssis} can be rewritten as
    \begin{equation*}
        \frac{dx}{dt}=-\lambda(x-x^{\star})\left(x-x^{\star}+\frac{J}{\lambda}\right),
    \end{equation*}
    which is the same as
    \begin{equation}\label{ydiff}
        \frac{dy}{dt}=-\lambda y^2-Jy.
    \end{equation}
From here, \eqref{yint} follows by integration. Moreover, a simple calculation using \eqref{ydiff} shows that
    \begin{equation*}
        \frac{d}{ds}e^{-J(t-s)}y(s)=-\lambda e^{-J(t-s)} y^2(s),
    \end{equation*}
    which is the same as~\eqref{expint} after integrating.
\end{proof}
We compare $Y_N(t)=X_N(t)/N-x^{\star}$ to $y(t)$ via a representation that is similar to \eqref{expint}.
\begin{lemma}\label{yrep}
In the notation just defined, we have
    \begin{equation}\label{expsto}
    Y_N(t)=e^{-Jt}Y_N(0)-\int_0^t \lambda e^{-J(t-s)}Y^2_N(s)\,ds+\int_0^t e^{-J(t-s)}\,dM_N(s),
\end{equation}
where 
\begin{equation}\label{M_N}
    M_N(t):=Y_N(t)-Y_N(0)+\int_0^t \lambda Y^2_N(s)-JY_N(s)\,ds,
\end{equation}
is a zero-mean martingale.
\end{lemma}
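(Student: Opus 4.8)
The plan is to establish both \eqref{expsto} and the martingale property of $M_N$ by reading them off from the Dynkin martingale attached to the affine observable $g(x):=x/N-x^{\star}$, for which $Y_N(t)=g(X_N(t))$. For a continuous-time Markov chain on the finite set $\{0,1,\dots,N\}$ with bounded jump rates, and for bounded $g$, standard theory (Dynkin's formula) guarantees that
\[
  t\longmapsto g(X_N(t))-g(X_N(0))-\int_0^t (\mathcal{A}g)(X_N(s))\,ds
\]
is a genuine martingale — not merely a local one — that starts at $0$, where $\mathcal{A}$ is the generator of $X_N$; integrability, the martingale property, and hence $\mathbb{E}M_N(t)=0$ are immediate because $g$ is bounded and $X_N$ makes only finitely many jumps on compact time intervals. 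So the whole content of the lemma is (i) to identify $\mathcal{A}g$ and (ii) to convert this additive identity into the multiplicative (variation-of-constants) form \eqref{expsto}.

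For (i): since $g$ is affine, only the net drift rate survives in the generator, giving $(\mathcal{A}g)(x)=\tfrac{1}{N}\bigl(\lambda x(1-x/N)+\varepsilon(N-x)-\mu x\bigr)$, which is exactly the right-hand side of \eqref{epssis} evaluated at the proportion $u=x/N$. Invoking the algebraic rewriting of that right-hand side already carried out in the proof of Lemma~\ref{lem:altrepxt}, namely $\lambda u(1-u)+\varepsilon(1-u)-\mu u=-\lambda(u-x^{\star})^2-J(u-x^{\star})$, we obtain $(\mathcal{A}g)(X_N(s))=-\lambda Y_N^2(s)-J Y_N(s)$. Substituting back, the Dynkin martingale equals $Y_N(t)-Y_N(0)+\int_0^t\bigl(\lambda Y_N^2(s)+J Y_N(s)\bigr)\,ds$, i.e.\ the process $M_N$ of \eqref{M_N} (with the $J$-term entering with the sign fixed by this computation, which is also the one forced by consistency with \eqref{expsto}).

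For (ii): rearrange the definition of $M_N$ to $dY_N(t)=dM_N(t)-\bigl(\lambda Y_N^2(t)+J Y_N(t)\bigr)\,dt$. Here $M_N$ is a finite-variation process — the sum of the piecewise-constant path $Y_N$ and an absolutely continuous part — so the ``stochastic'' integral appearing in \eqref{expsto} is an ordinary pathwise Lebesgue--Stieltjes integral and no covariation correction arises. Multiplying by the integrating factor $e^{Jt}$ and using integration by parts against the $C^1$ function $t\mapsto e^{Jt}$,
\[
  d\bigl(e^{Jt}Y_N(t)\bigr)=e^{Jt}\,dY_N(t)+J e^{Jt}Y_N(t)\,dt=e^{Jt}\,dM_N(t)-\lambda e^{Jt}Y_N^2(t)\,dt;
\]
integrating from $0$ to $t$ and multiplying through by $e^{-Jt}$ yields \eqref{expsto}.

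The argument is essentially bookkeeping and I do not expect a genuine obstacle; the only points that need more than one line are the correct identification of $\mathcal{A}g$ via Lemma~\ref{lem:altrepxt}, and the (routine) observation that, because every process in sight is finite-variation, the passage to \eqref{expsto} is an elementary pathwise computation rather than one requiring It\^o's formula. It is worth recording at this stage, for use later, that the jumps of $M_N$ coincide with those of $Y_N$ and so are bounded by $1/N$, and that its predictable quadratic variation is finite on compacts — facts that feed directly into the control of the martingale term in Lemma~\ref{T_1bound}.
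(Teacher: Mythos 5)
Your argument is correct and follows the same two-step structure as the paper's: identify $M_N$ as the Dynkin martingale of the observable $g(x)=x/N-x^{\star}$, then pass from the additive decomposition to the variation-of-constants form \eqref{expsto} via the integrating factor $e^{Jt}$. The main difference is one of packaging: the paper outsources both steps to references — the martingale property to Kurtz's density-dependent-family framework (Proposition 2.1 of \cite{kurtz1971limit}) and the exponential representation to Lemma 4.1 of \cite{barbour2012law} — whereas you derive both from first principles, using Dynkin's formula for a finite-state chain and an elementary pathwise integration by parts (correctly observing that $M_N$ has finite variation, so no It\^o correction intervenes). Your route is more self-contained; the paper's is shorter and situates the model in a standard framework used repeatedly elsewhere in the paper. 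You also caught, implicitly, that the sign on the $J Y_N(s)$ term in \eqref{M_N} as printed should be $+$ rather than $-$: your computation of $\mathcal{A}g$ gives $-\lambda Y_N^2 - J Y_N$ for the drift, so $M_N(t)=Y_N(t)-Y_N(0)+\int_0^t(\lambda Y_N^2(s)+J Y_N(s))\,ds$, which is also the sign that makes \eqref{expsto} come out right; this is a typo in the displayed formula, not an error in your argument.
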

\begin{proof}
To show that the process $M_N(t)$ defined in~\eqref{M_N} is a martingale, note that the family of Markov chains $(X_N(t))_{N\geq1}$ is a \textit{density dependent family} (see Section 4 in \cite{kurtz1971limit}) with transitions of the form
    \begin{align*}
    &x\mapsto x+1\quad \text{at rate}\quad Nf\left(\frac{x}{N}, +1\right):= N\cdot\left(\lambda \frac{x}{N}\left(1-\frac{x}{N}\right)+\varepsilon\left(1-\frac{x}{N}\right)\right),\\
    &x\mapsto x-1\quad \text{at rate}\quad Nf\left(\frac{x}{N}, -1\right):=N\cdot\mu\frac{x}{N},
    \end{align*}
    and the state space $\{0, 1,\ldots, N\}$ is compact. It is easy to check that the scaled process $(X_N(t)/N)_{t\geq0}$ satisfies the conditions of \cite[Proposition 2.1]{kurtz1971limit} and thus
    \begin{equation*}
        M_N(t):=\frac{X_N(t)}{N}-\frac{X_N(0)}{N}-\int_0^tF\left(\frac{X_N(s)}{N}\right)\,ds,
    \end{equation*}
    is a zero-mean martingale, where $F(x)=f\left(x, +1\right)-f\left(x, -1\right)$. 
    Then formula~\eqref{expsto} holds, by \cite[Lemma~4.1]{barbour2012law}.   
\end{proof}
Comparing expressions \eqref{expint} and \eqref{expsto}, it is clear that to be able to approximate $Y_N(t)$ by $y(t)$ we must control the size of $\big\lvert\int_0^t e^{-J(t-s)}\,dM_N(s)\big\rvert$. To do this,  we use the following simplified version of~\cite[Lemma~7]{lopes2020extinction} for multi-dimensional Markov jump chains.
\begin{lemma}\cite[Lemma 7]{lopes2020extinction}\label{expmartingalebound}
Let $(X(t))_{t\geq0}$ be a pure jump Markov chain with state space $E\subseteq\mathbb{R}$. For $x,l\in\mathbb{R}$ such that $x, x+l \in E$, let $q(x, x+l)$ denote the rate the chain jumps from $x$ to $x+l$, and assume that there is a deterministic $B>0$ such that $q(x, x+l)=0$ for $\lvert l \rvert >B$. Suppose further that, for each $x\in E$, the drift $F(x):=\sum_{l}lq(x, x+l)$ at $x$ can be written in the form
\begin{equation*}
    F(x)=\widetilde{A}x+\widetilde{F}(x), \quad \text{for some } \widetilde{A}\leq 0.
\end{equation*}
Let $(M(t))_{t\geq0}$ be the corresponding Dynkin martingale given by
\begin{equation*}
    M(t):=X(t)-X(0)-\int_0^t F(X(s))\,ds,
\end{equation*}
and let $\widetilde{M}(t)=\int_0^t e^{\widetilde{A}(t-s)}\,dM(s)$. For $\delta>0$, define $\widetilde{T}(\delta)=\inf\{t\geq0:\lvert \widetilde{M}(t)\rvert>\delta\}$ to be  the infimum of time $t$ such that $\widetilde{M}(t)$ exceeds $\delta$ in absolute value. Further, given $u\in\mathbb{R}_+$, let $\nu(z, u):=\sum_w q(z, z+w)(e^{\widetilde{A}u}w)^2$, and suppose that for some $K>0$, $\int_0^t \nu(X(s), t-s)\,ds\leq K$. 
Then for any $\sigma>0$ and $0<\omega\leq 4(\log2)^2K/B^2$, we have
\begin{equation*}
    \mathbb{P}(\widetilde{T}(e^{-\widetilde{A}\sigma}\sqrt{\omega K})\leq \sigma \lceil e^{\omega/8} \rceil)\leq 4e^{-\omega/8}.
\end{equation*}
\end{lemma}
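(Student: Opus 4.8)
The plan is to undo the exponential discounting to expose a genuine martingale, chop the interval $[0,\sigma\lceil e^{\omega/8}\rceil]$ into $\lceil e^{\omega/8}\rceil$ windows of length $\sigma$, and on each window run the standard exponential supermartingale estimate for pure-jump martingales with bounded jumps; the hypothesis $\omega\le 4(\log 2)^2 K/B^2$ will turn out to be exactly what is needed for the optimal exponential tilt to lie in the range where that estimate is valid.

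First I would write $\widetilde M(t)=e^{\widetilde A t}N(t)$ with $N(t):=\int_0^t e^{-\widetilde A s}\,dM(s)$. Since the integrand $e^{-\widetilde A s}$ is deterministic and continuous, $N$ is a pure-jump martingale null at $0$, with jumps $\Delta N(s)=e^{-\widetilde A s}\Delta M(s)$ and predictable quadratic variation $\langle N\rangle_t=\int_0^t e^{-2\widetilde A s}\psi(X(s))\,ds$, where $\psi(z):=\sum_w q(z,z+w)w^2$. Since $\nu(z,u)=e^{2\widetilde A u}\psi(z)$, the hypothesis says precisely that $\int_0^t e^{2\widetilde A(t-s)}\psi(X(s))\,ds\le K$ for all relevant $t$.

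Now set $m:=\lceil e^{\omega/8}\rceil$, $T_i:=i\sigma$ and $\delta:=e^{-\widetilde A\sigma}\sqrt{\omega K}$. If $|\widetilde M(t)|>\delta$ for some $t\in[T_{i-1},T_i]$ then, because $\widetilde A\le 0$, $|N(t)|=e^{-\widetilde A t}|\widetilde M(t)|>e^{-\widetilde A T_{i-1}}\delta=e^{-\widetilde A T_i}\sqrt{\omega K}=:a_i$, so
\[
\{\widetilde T(\delta)\le T_m\}\ \subseteq\ \bigcup_{i=1}^{m}\Big\{\sup_{t\le T_i}|N(t)|\ge a_i\Big\}.
\]
For fixed $i$ and $t\le T_i$ the jumps satisfy $|\Delta N(t)|\le e^{-\widetilde A T_i}B=:c_i$, and the hypothesis at $t=T_i$ gives $\langle N\rangle_{T_i}=e^{-2\widetilde A T_i}\int_0^{T_i}e^{2\widetilde A(T_i-s)}\psi(X(s))\,ds\le e^{-2\widetilde A T_i}K=:v_i$. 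For any $\theta>0$ with $\theta c_i\le\log 2$, the usual exponential supermartingale bound for pure-jump martingales (valid in this range since $e^y-1-y\le y^2$ for $|y|\le\log 2$, after a routine localisation) gives $\mathbb E[e^{\theta N(T_i)}]\le e^{\theta^2 v_i}$, so Doob's maximal inequality applied to the submartingale $e^{\theta N}$ yields $\mathbb P(\sup_{t\le T_i}N(t)\ge a_i)\le e^{-\theta a_i+\theta^2 v_i}$. The unconstrained minimiser $\theta^\star=a_i/(2 v_i)$ obeys $\theta^\star c_i=\tfrac{B}{2}\sqrt{\omega/K}$, which is $\le\log 2$ exactly when $\omega\le 4(\log 2)^2 K/B^2$; taking $\theta=\theta^\star$ gives $\mathbb P(\sup_{t\le T_i}N(t)\ge a_i)\le e^{-a_i^2/(4 v_i)}=e^{-\omega/4}$, and the same bound for $-N$ gives $\mathbb P(\sup_{t\le T_i}|N(t)|\ge a_i)\le 2e^{-\omega/4}$. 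A union bound now gives
\[
\mathbb P(\widetilde T(\delta)\le T_m)\ \le\ 2m\,e^{-\omega/4}\ \le\ 2(e^{\omega/8}+1)e^{-\omega/4}\ =\ 2e^{-\omega/8}+2e^{-\omega/4}\ \le\ 4e^{-\omega/8},
\]
as required (the degenerate case $\widetilde A=0$, where $N=M$, is covered by the same computation).

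The conceptual content is entirely in the change of variables $\widetilde M(t)=e^{\widetilde A t}N(t)$ together with the observation that the hypothesis is a bound on $\langle N\rangle_{T_i}$ once $e^{-2\widetilde A T_i}$ is factored out; everything else is bookkeeping. The one point needing care --- and the main obstacle --- is the calibration in the exponential step, where one must track the discount factors $e^{-\widetilde A T_i}$ precisely so that (i) the threshold in the conclusion is exactly $e^{-\widetilde A\sigma}\sqrt{\omega K}$, (ii) the admissibility constraint $\theta^\star c_i\le\log 2$ becomes exactly $\omega\le 4(\log 2)^2 K/B^2$, and (iii) the per-window exponent is exactly $\omega/4$, so that the union over $\lceil e^{\omega/8}\rceil$ windows collapses to $4e^{-\omega/8}$ rather than a weaker bound. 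A secondary technical point is justifying that the exponential local martingale is genuinely a supermartingale, which follows by the standard localisation (and is immediate when the state space is compact, as in the intended application).
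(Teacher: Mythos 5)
The paper states this lemma as a cited result (\cite[Lemma 7]{lopes2020extinction}) and does not include a proof of it, so there is no in-paper argument to compare against. That said, your proof is correct and is the standard route for this type of maximal inequality: conjugating away the exponential discount to expose the martingale $N(t)=\int_0^t e^{-\widetilde A s}\,dM(s)$, slicing $[0,\sigma\lceil e^{\omega/8}\rceil]$ into windows of length $\sigma$, and running the exponential supermartingale plus Doob and union bound on each window, with the hypothesis $\omega\le 4(\log 2)^2K/B^2$ appearing exactly as the admissibility condition $\theta^\star c_i\le\log 2$ and the per-window exponent collapsing to $\omega/4$. The calibration checks out: $\theta^\star c_i=\tfrac{B}{2}\sqrt{\omega/K}$, $a_i^2/(4v_i)=\omega/4$, and $2\lceil e^{\omega/8}\rceil e^{-\omega/4}\le 4e^{-\omega/8}$. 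One small polish worth making: rather than invoking Doob on the submartingale $e^{\theta N}$ (which requires integrability one would have to justify), it is cleaner to apply the maximal inequality directly to the nonnegative (local, hence true) supermartingale $\exp(\theta N_t-\theta^2\langle N\rangle_t)$ together with $\langle N\rangle_t\le v_i$ on $[0,T_i]$; this gives the same bound $e^{-\theta a_i+\theta^2 v_i}$ with no extra hypotheses, and subsumes the localisation remark you made at the end.
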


Specialising the lemma to our setting, we obtain the following result, which  controls  the size of the term $\big\lvert\int_0^t e^{-J(t-s)}\,dM_N(s)\big\rvert$.
\begin{lemma}\label{T_1bound}
    Fix $h\in(0,1)$, set $\omega:=4(\log2)^2kN^h$, $k:=(\lambda+\mu+\varepsilon)/2J$, and 
    \begin{equation*}
        T_1:=\inf\left\{t\geq0: \left\lvert\int_0^t e^{-J(t-s)}\,dM_N(s)\right\rvert>e\sqrt{\frac{\omega k}{N}}\right\}.
    \end{equation*}
Then
    \begin{equation*}
        \mathbb{P}\left(T_1\leq \frac{1}{J}\left\lceil e^{\omega/8}\right\rceil\right)\leq 4e^{-\omega/8}.
    \end{equation*}
\end{lemma}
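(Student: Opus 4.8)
The plan is to obtain Lemma~\ref{T_1bound} as a direct specialisation of Lemma~\ref{expmartingalebound}, applied to the centred but \emph{unscaled} process $Z_N(t) := X_N(t) - x^{\star}N$. First I would record that $Z_N$ is a pure jump Markov chain on the finite (hence compact) set $\{j - x^{\star}N : 0 \le j \le N\} \subseteq \mathbb{R}$, with jumps only of size $\pm 1$, so that one may take $B = 1$. A constant shift does not change the drift, so the drift of $Z_N$ at a state $z$ is $NF(z/N + x^{\star})$, where $F(x) = \lambda x(1-x) + \varepsilon(1-x) - \mu x$ is the drift function of the scaled model; using the factorisation $F(x) = -\lambda(x-x^{\star})(x - x^{\star} + J/\lambda)$ from the proof of Lemma~\ref{lem:altrepxt}, this drift equals $-Jz - \lambda z^2/N$, which has the required form $\widetilde A z + \widetilde F(z)$ with $\widetilde A = -J \le 0$ and $\widetilde F(z) = -\lambda z^2/N$. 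Finally I would note that the Dynkin martingale of $Z_N$ is exactly $N M_N$, with $M_N$ the martingale of~\eqref{M_N}, so the transformed martingale appearing in Lemma~\ref{expmartingalebound} is $\widetilde M(t) = \int_0^t e^{-J(t-s)}\, d(NM_N)(s) = N\int_0^t e^{-J(t-s)}\, dM_N(s)$; keeping track of this factor $N$ is the main bookkeeping point.

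Next I would verify the remaining hypotheses. The total jump rate of $X_N$ out of any state $x$ is $\lambda x(1-x/N) + \varepsilon(N-x) + \mu x$, which is crudely at most $(\lambda+\mu+\varepsilon)N$ for $x \in \{0,1,\dots,N\}$; since all jumps have size $1$, this gives $\nu(z,u) = e^{-2Ju}\sum_w q(z,z+w)w^2 \le (\lambda+\mu+\varepsilon)N\, e^{-2Ju}$ for every state $z$ of $Z_N$, and hence, pathwise and uniformly in $t$, $\int_0^t \nu(Z_N(s), t-s)\, ds \le (\lambda+\mu+\varepsilon)N/(2J) = kN$. So one may take $K = kN$, and the constraint $0 < \omega \le 4(\log 2)^2 K/B^2 = 4(\log 2)^2 kN$ holds for all $N \ge 1$ because $\omega = 4(\log 2)^2 kN^h$ with $h \in (0,1)$. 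I would stress that it is precisely the crude rate bound $(\lambda+\mu+\varepsilon)N$ that yields the constant $k = (\lambda+\mu+\varepsilon)/(2J)$ stated in the lemma.

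The last step is to feed this into the conclusion of Lemma~\ref{expmartingalebound} with the choice $\sigma = 1/J > 0$, so that $e^{-\widetilde A\sigma} = e^{J\sigma} = e$, the threshold becomes $e^{-\widetilde A\sigma}\sqrt{\omega K} = e\sqrt{\omega k N}$, and the time bound becomes $\sigma\lceil e^{\omega/8}\rceil = \tfrac1J \lceil e^{\omega/8}\rceil$. Since $\widetilde M(t) = N\int_0^t e^{-J(t-s)}\, dM_N(s)$, the event $|\widetilde M(t)| > e\sqrt{\omega k N}$ is exactly the event $\big|\int_0^t e^{-J(t-s)}\, dM_N(s)\big| > e\sqrt{\omega k/N}$; therefore the stopping time $\widetilde T(e\sqrt{\omega k N})$ of Lemma~\ref{expmartingalebound} coincides with $T_1$, and that lemma delivers precisely $\mathbb{P}\big(T_1 \le \tfrac1J\lceil e^{\omega/8}\rceil\big) \le 4 e^{-\omega/8}$.

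I do not expect any genuine obstacle here: the argument is pure specialisation and bookkeeping. The only two points that require care are (i) working with the unscaled process $Z_N$ rather than $X_N/N$, so that jumps have size $1$ and $B = 1$ rather than $1/N$ — this matters because the admissible range of $\omega$ in Lemma~\ref{expmartingalebound} scales like $1/B^2$; and (ii) tracking the factor $N$ relating $\widetilde M$ to $\int_0^t e^{-J(t-s)}\, dM_N(s)$ when translating the threshold in $\widetilde T$ into the threshold defining $T_1$.
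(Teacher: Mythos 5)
Your proof is correct and follows essentially the same path as the paper's: both are direct applications of Lemma~\ref{expmartingalebound} with $\widetilde A = -J$, the same crude rate bound $(\lambda+\mu+\varepsilon)N$, $\sigma = 1/J$, and $\omega = 4(\log 2)^2 k N^h$. The only difference is that the paper applies the lemma to the scaled process $Y_N = X_N/N - x^{\star}$ (with $B = 1/N$, $K = k/N$), whereas you work with $Z_N = N Y_N$ (with $B = 1$, $K = kN$); this is a purely cosmetic change of normalisation, and I would note that the admissible range for $\omega$, namely $4(\log 2)^2 K/B^2$, is actually scale-invariant, so your concluding remark that the choice of scaling ``matters'' for the $\omega$-constraint is not quite right, though it has no bearing on the correctness of the argument.
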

\begin{proof}
    We apply Lemma \ref{expmartingalebound} to our process $Y_N(t)$. Clearly, the only possible jumps for $Y_N(t)$ are $\pm 1/N$ so we can take $B:=1/N$.  The drift $F_X$ of $X_N$ is given by 
    \begin{equation*}
        F_X(x)=1\cdot\lambda x\left(1-\frac{x}{N}\right)+\varepsilon(N-x)+(-1)\cdot \mu x=-\lambda N\left(\frac{x}{N}-x^{\star}\right)\left(\frac{x}{N}-x^{\star}+\frac{J}{\lambda}\right).
    \end{equation*}
    Hence it is easily seen that
the drift of $Y_N$ is given by $F_Y(y)=\widetilde{A}y+\widetilde{F}(y)$, where $\widetilde{A}=-J$ and $\widetilde{F}(y)=-\lambda y^2$. 

Moreover, there is a uniform bound on $\int_0^t\nu(Y_N(s), t-s)ds$, where $\nu(z, u):=\sum_w q(z, z+w)(e^{-Ju}w)^2$ and $q(y, y+w)$ is the rate of the jump from $y$ to $y+w$ for $y\in\{0, 1, \ldots, N\}/N$, $w = \pm 1/N$:
    \allowdisplaybreaks
    \begin{align*}
        &\int_0^t\nu(Y_N(s), t-s)\,ds\nonumber\\
        &\quad=\int_0^t\left(\lambda X_N(s)\left(1-\frac{X_N(s)}{N}\right)+\varepsilon\left(N-X_N(s)\right)+\mu X_N(s)\right)e^{-2J(t-s)}N^{-2}\,ds\nonumber\\
        &\quad=\frac{1}{N}\int_0^t\left(\lambda \frac{X_N(s)}{N}\left(1-\frac{X_N(s)}{N}\right)+\varepsilon\left(1-\frac{X_N(s)}{N}\right)+\mu\frac{X_N(s)}{N}\right)e^{-2J(t-s)}\,ds\nonumber\\
        &\quad\leq \frac{1}{N}\int_0^t\left((\lambda+\mu)\frac{X_N(s)}{N}+\varepsilon\right)e^{-2J(t-s)}\,ds\nonumber\\
        &\quad\leq \frac{\lambda+\mu+\varepsilon}{N}\int_0^te^{-2J(t-s)}\,ds\leq\frac{k}{N}.
    \end{align*}
    The result then follows directly from the  Lemma \ref{expmartingalebound} by setting $\sigma=\frac{1}{J}$ and $K=k/N$.
\end{proof}
The last missing ingredient for the proof of Lemma \ref{following} is a quantitative upper bound on the speed of convergence for the solution to the governing equation~\eqref{epssis} with initial condition $x(0)=X_N(0)/N$ to the fixed point $x^{\star}$. This will then be used to bound 
$\lvert Y_N(t)-y(t) \rvert= \lvert X_N(t)-x(t) \rvert$. The following lemma with an elementary calculus proof gives such a bound.
\begin{lemma}\label{detb}
Using the notation defined above, the following inequality holds, 
\begin{equation}\label{yboundpart3}
    \lvert y(t) \rvert\leq\frac{2J}{J-(\lambda-\mu-\varepsilon)}\lvert y(0)\rvert e^{-tJ}, \quad t\geq0.
\end{equation}
\end{lemma}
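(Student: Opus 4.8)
The plan is to read off \eqref{yboundpart3} from the closed-form solution in Proposition~\ref{detsol}. Write $\alpha := x(0) = X_N(0)/N \in [0,1]$ and subtract $x^{\star}$ from \eqref{parsolepssis}; using $x^{\star}-x_1^{\star} = J/\lambda$ (immediate from the formulas for $x^{\star}, x_1^{\star}$) one checks that $y(0) = \alpha - x^{\star}$, so that
\[
y(t) \;=\; \frac{(J/\lambda)(\alpha - x^{\star})}{(\alpha - x_1^{\star})e^{tJ} - (\alpha - x^{\star})} \;=\; \frac{(J/\lambda)\,y(0)}{D(t)}, \qquad D(t) := (\alpha - x_1^{\star})e^{tJ} - (\alpha - x^{\star}).
\]
Since $|x_1^{\star}| = \bigl(J - (\lambda-\mu-\varepsilon)\bigr)/(2\lambda)$ and $x^{\star}-x_1^{\star}=J/\lambda$, we have $\dfrac{J/\lambda}{|x_1^{\star}|} = \dfrac{2J}{J-(\lambda-\mu-\varepsilon)}$, so the whole lemma reduces to the single lower bound
\[
|D(t)| \;\ge\; |x_1^{\star}|\, e^{tJ}, \qquad t \ge 0 .
\]

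To prove this I would first record the elementary facts that $\varepsilon>0$ forces $J > |\lambda-\mu-\varepsilon|$, hence $x_1^{\star} < 0 < x^{\star}$, and that $\alpha \in [0,1]$ gives $\alpha - x_1^{\star} \ge -x_1^{\star} = |x_1^{\star}| > 0$. Then I split on the sign of $\alpha - x^{\star} = y(0)$. If $y(0) \le 0$ (the solution approaches $x^{\star}$ from below), then $D(t) = (\alpha - x_1^{\star})e^{tJ} + (x^{\star}-\alpha) \ge (\alpha - x_1^{\star})e^{tJ} \ge |x_1^{\star}|e^{tJ}$. If $y(0) > 0$, I rewrite $\alpha - x_1^{\star} = y(0) + J/\lambda$, so $D(t) = y(0)\bigl(e^{tJ}-1\bigr) + (J/\lambda)e^{tJ} \ge (J/\lambda)e^{tJ} \ge |x_1^{\star}|e^{tJ}$, the last step because $J/\lambda = x^{\star}-x_1^{\star} \ge -x_1^{\star}$ (valid since $x^{\star}\ge 0$). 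In both cases $D(t) > 0$, which incidentally re-confirms that the solution is well defined on all of $[0,\infty)$, and $|D(t)| = D(t) \ge |x_1^{\star}|e^{tJ}$. Plugging this into the displayed expression for $y(t)$ yields $|y(t)| \le \dfrac{(J/\lambda)|y(0)|}{|x_1^{\star}|\,e^{tJ}} = \dfrac{2J}{J-(\lambda-\mu-\varepsilon)}|y(0)|e^{-tJ}$, which is \eqref{yboundpart3}.

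I do not expect a genuine obstacle here; the only points needing care are the arithmetic identity $\tfrac{J/\lambda}{|x_1^{\star}|} = \tfrac{2J}{J-(\lambda-\mu-\varepsilon)}$ and choosing one constant that works for both orientations of the approach to $x^{\star}$ — in the case $y(0)\le 0$ the argument actually gives the sharper estimate $|y(t)| \le |y(0)|e^{-tJ}$, but it is convenient for the use of this lemma in Section~\ref{priorconcen} to state the uniform constant $\tfrac{2J}{J-(\lambda-\mu-\varepsilon)} \ge 1$. (If one preferred to avoid the closed form, an alternative is to start from the integral identity \eqref{expint}, invoke the monotonicity of $|y|$ noted after Proposition~\ref{detsol}, and close a Gr\"onwall-type estimate; but the computation via Proposition~\ref{detsol} above is shorter and more transparent.)
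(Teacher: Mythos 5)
Your proof is correct and follows essentially the same route as the paper: use the closed-form solution from Proposition~\ref{detsol}, write $y(t)$ as $(J/\lambda)y(0)/D(t)$, and show $D(t) \ge |x_1^\star|e^{tJ}$; the paper accomplishes the denominator bound in a single algebraic line (factoring out $e^{tJ}$ and using $y(0)\ge -x^\star$, $0\le 1-e^{-tJ}\le 1$), whereas you split into the two cases $y(0)\le 0$ and $y(0)>0$, but the bound obtained is identical. One small correction to your concluding parenthetical: the sharper estimate $|y(t)| \le |y(0)|e^{-tJ}$ actually holds in the case $y(0)\ge 0$ (where $D(t)\ge (J/\lambda)e^{tJ}$), not in the case $y(0)\le 0$; when $y(0)<0$ one has $D(t)=(y(0)+J/\lambda)e^{tJ}-y(0)\le (J/\lambda)e^{tJ}$, so the quantity $|y(t)|e^{tJ}/|y(0)|$ is increasing in $t$, not decreasing. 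This does not affect your main argument, since the uniform constant $2J/(J-(\lambda-\mu-\varepsilon))$ is all that the lemma asserts.
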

\begin{proof}
Recall that $x^{\star}=\frac{\lambda-\mu-\varepsilon+J}{2\lambda}\geq 0, x_1^{\star}=\frac{\lambda-\mu-\varepsilon-J}{2\lambda}\leq 0$, where $J=\sqrt{(\lambda-\mu-\varepsilon)^2+4\lambda\varepsilon}$. We have $-x^{\star} \leq y(0) \leq 1-x^{\star}$. Also, from \eqref{parsolepssis} and the fact $J/\lambda\geq x^{\star}$, we obtain
\begin{align*}
    \lvert y(t) \rvert
    &=\left\lvert\frac{J}{2\lambda}\frac{2 y(0)}{(x^{\star}-x_1^{\star}+y(0))e^{tJ}-y(0)}\right\rvert
    =\frac{\frac{J}{\lambda}\lvert y(0) \rvert}{\left(\frac{J}{\lambda}+y(0)\right)e^{tJ}-y(0)} \\
    &=\frac{\frac{J}{\lambda}\lvert y(0)\rvert}{\frac{J}{\lambda}+(1-e^{-tJ}) y(0)}e^{-tJ}.
\end{align*}
It then follows by easy manipulation that
\begin{equation*}
    \lvert y(t) \rvert\leq \frac{\frac{J}{\lambda}\lvert y(0)\rvert }{\frac{J}{\lambda}-(1-e^{-tJ})x^{\star}}e^{-tJ}
    \leq\frac{\frac{J}{\lambda}\lvert y(0)\rvert }{\frac{J}{\lambda}-\frac{\lambda-\mu-\varepsilon+J}{2\lambda}}e^{-tJ}
    =\frac{2J}{J-(\lambda-\mu-\varepsilon)}\lvert y(0)\rvert e^{-tJ}.
\qedhere\end{equation*}
\end{proof}
We can now prove Lemma \ref{following}. As previously discussed, we use representations \eqref{expint} and \eqref{expsto} to bound the difference between $X_N(t)/N$ and $x(t)$, and use Lemma \ref{T_1bound} to control the martingale term in \eqref{expsto}.
\begin{proof}[Proof of Lemma \ref{following}]
Using \eqref{expint} and \eqref{expsto}, we have
\begin{align*}
    \widetilde{e}_N(t)&:=\lvert Y_N(t)-y(t)\rvert \\
    &\leq\lambda\int_0^t e^{-J(t-s)}\left\lvert Y_N(s)^2-y(s)^2\right\rvert\,ds+\left\lvert\int_0^t e^{-J(t-s)}\,dM_N(s)\right\rvert\\
    &\leq\lambda\int_0^t e^{-J(t-s)} \widetilde{e}_N(s)(\widetilde{e}_N(s)+2\lvert y(s)\rvert)\,ds+\left\lvert\int_0^t e^{-J(t-s)}\,dM_N(s)\right\rvert.
\end{align*}
Using \eqref{yboundpart3} to bound $\lvert y(s) \rvert$, we obtain
\begin{align}\label{lowrangeerror}
    \begin{split}
        \widetilde{e}_N(t)\leq\frac{4\lambda J}{J-(\lambda-\mu-\varepsilon)}\lvert y(0) \rvert e^{-Jt}\int_0^t\widetilde{e}_N(s)\,ds +\lambda\int_0^t & e^{-J(t-s)} \widetilde{e}_N(s)^2\,ds\\
        &+\left\lvert\int_0^t e^{-J(t-s)}\,dM_N(s)\right\rvert.
    \end{split}
\end{align}
To bound the first two terms on the right-hand side of \eqref{lowrangeerror}, we define 
\begin{equation*}
    T_2:=\inf\left\{t:\widetilde{e}_N(t)>3\widetilde{C}\sqrt{\frac{\omega k}{N}}\right\},
\end{equation*}
where $\widetilde{C}:=4e^{\frac{4\lambda}{J-(\lambda-\mu-\varepsilon)}(x^{\star}\vee (1-x^{\star}))}$ (the exact form is not so important). Recall the definition of $T_1$, $\omega$ and $k$ from Lemma \ref{T_1bound}. We will show that on the event $t\leq T_1 \wedge T_2$, one has
\begin{equation}\label{tildeebound}
    \widetilde{e}_N(t)\leq\widetilde{C}\sqrt{\frac{\omega k}{N}}.
\end{equation}
For now, suppose this is true. Define
\begin{equation*}
    T_3:=\inf\left\{t:\widetilde{e}_N(t)>2\widetilde{C}\sqrt{\frac{\omega k}{N}}\right\}.
\end{equation*}
By definition, $\mathbb{P}(T_3 \leq T_2)=1$. Moreover, since we have assumed that \eqref{tildeebound} holds, we also have $\mathbb{P}(T_3\leq T_1 \wedge T_2)=0$, which implies that $0=\mathbb{P}(T_3\leq T_1\wedge T_2)=\mathbb{P}(T_3\leq T_1)$.  Applying Lemma \ref{T_1bound}, we obtain 
\begin{equation*}
    \mathbb{P}\left(T_3\leq\mbox{$\frac{1}{J}$}\lceil e^{\omega/8}\rceil\right)\leq\mathbb{P}\left(T_1\leq\mbox{$\frac{1}{J}$}\lceil e^{\omega/8}\rceil\right)\leq4e^{-\omega/8},
\end{equation*}
which completes the proof. The only thing left to check is \eqref{tildeebound}. On the event $t\leq T_1$, it follows from \eqref{lowrangeerror} that
\begin{equation*}
    \widetilde{e}_N(t)\leq\frac{4\lambda J}{J-(\lambda-\mu-\varepsilon)}\lvert y(0) \rvert e^{-Jt}\int_0^t \widetilde{e}_N(s)\,ds+\lambda\int_0^t e^{-J(t-s)} \widetilde{e}_N(s)^2\,ds+e\sqrt{\frac{\omega k}{N}}.
\end{equation*}
To further bound $\widetilde{e}_N(t)$, a standard method is to use the Gr\"onwall’s Lemma (see for example Theorem 1.3.1 in 
\cite{ames1997inequalities}). However, to apply the inequality we first need to control the $\widetilde{e}_N^2$ term. On the event $t\leq T_1 \wedge T_2$, one has
\begin{align*}
    \widetilde{e}_N(t)&\leq \frac{4\lambda J}{J-(\lambda-\mu-\varepsilon)}\lvert y(0) \rvert e^{-tJ}\int_0^t \widetilde{e}_N(s)\,ds+9\lambda\widetilde{C}^2\frac{\omega k}{N}\int_0^t e^{-J(t-s)}\,ds+e\sqrt{\frac{\omega k}{N}}\\
    &\leq\frac{4\lambda J}{J-(\lambda-\mu-\varepsilon)}\lvert y(0) \rvert e^{-tJ}\int_0^t \widetilde{e}_N(s)\,ds+\frac{9\lambda\widetilde{C}^2}{J}\frac{\omega k}{N}+e\sqrt{\frac{\omega k}{N}}.
\end{align*}
Choose $N>(18k\lambda\widetilde{C}^2J^{-1}\log{2} )^{\frac{2}{1-h}}$, so that 
\begin{equation}\label{eq:11}
    \frac{9\lambda\widetilde{C}^2}{J}\frac{\omega k}{N}=\frac{18k\lambda\widetilde{C}^2\log{2} }{J}N^{-\frac{1-h}{2}}\sqrt{\frac{\omega k}{N}}<\sqrt{\frac{\omega k}{N}},
\end{equation}
and hence
\begin{equation*}
    \widetilde{e}_N(t)\leq\frac{4\lambda J}{J-(\lambda-\mu-\varepsilon)}\lvert y(0) \rvert e^{-Jt}\int_0^t \widetilde{e}_N(s)\,ds+4\sqrt{\frac{\omega k}{N}}.
\end{equation*}
Let $\widehat{e}_N(t)=e^{tJ}\widetilde{e}_N(t)$, then on the event $t\leq T_1 \wedge T_2$, we have
\begin{equation*}
    \widehat{e}_N(t)\leq\frac{4\lambda J}{J-(\lambda-\mu-\varepsilon)}\lvert y(0) \rvert\int_0^t e^{-sJ}\widehat{e}_N(s)\,ds+4e^{tJ}\sqrt{\frac{\omega k}{N}},
\end{equation*}
for $N$ large enough. Applying Gr\"onwall’s lemma to $\widehat{e}_N(t)$, we obtain for $t\leq T_1 \wedge T_2$ that
\begin{align*}
    \widehat{e}_N(t)\leq4e^{\frac{4\lambda J}{J-(\lambda-\mu-\varepsilon)}\lvert y(0) \rvert\frac{1-e^{-tJ}}{J}}\sqrt{\frac{\omega k}{N}}e^{tJ}&\leq4e^{\frac{4\lambda}{J-(\lambda-\mu-\varepsilon)}\lvert y(0) \rvert}\sqrt{\frac{\omega k}{N}}e^{tJ}
\end{align*}
Dividing $e^{tJ}$ from both side the inequality above then establishes \eqref{tildeebound} as $\lvert y(0) \rvert \leq x^{\star}\vee (1-x^{\star})$. 
\end{proof}

\subsection{Proof of Lemma~\ref{mean}}
To explain the proof of the other main result of this section, write $y_0=x_0/N-x^{\star}$, which is the corresponding starting state of the chain $Y_N(t)$. Recall the definition of $M_N(t)$ in \eqref{M_N}, Since $\mathbb{E}(M_N(t))=0$ for all $t\geq0$, we have
    \begin{align}\label{yb}
        \mathbb{E}_{x_0}(Y_N(t))&=y_0-\mathbb{E}_{x_0}\left(\int_0^t JY_N(s)\,ds\right)-\mathbb{E}_{x_0}\left(\int_0^t \lambda Y^2_N(s)\,ds\right)\nonumber\\
        &= y_0-\int_0^t J\mathbb{E}_{x_0}\left(Y_N(s)\right)\,ds-\int_0^t \lambda\mathbb{E}_{x_0}\left( Y^2_N(s)\right)\,ds,
    \end{align}
    where we have used Fubini in the last equality to change the order of expectation and the integral. If we could replace $\mathbb{E}_{x_0}\left( Y^2_N(s)\right)$ by $\left(\mathbb{E}_{x_0}Y_N(s)\right)^2$, then $\mathbb{E}_{x_0}\left(Y_N(s)\right)$ would exactly solve the  integral equation~\eqref{expint}, and the claim of the lemma would be trivial. Thus we prove the result by showing the difference between $\mathbb{E}_{x_0}\left( Y^2_N(t)\right)$ and $\left(\mathbb{E}_{x_0}Y_N(t)\right)^2$ is small for all $t\leq t_{\rm{follow}}$, and so the difference between $\int_0^t \lambda\mathbb{E}_{x_0}\left( Y^2_N(s)\right)ds$ and $\int_0^t \lambda\left(\mathbb{E}_{x_0}Y_N(s)\right)^2ds$ is also small for not too large~$t$.

\begin{proof}[Proof of Lemma \ref{mean}]    
    Let $y(t)$ solve~\eqref{ydiff} and $A_t:=\{z:\lvert z-y(t)\rvert \leq C_2 N^{-\frac{1-h}{2}}\}$ for fixed $h< 1/2$ and $C_2$ from Lemma~\ref{following}. Applying that lemma with this $h$, we conclude that $\mathbb{P}_{x_0}(Y_N(t)\in A_t^c)$ is exponentially small in $N$ for  $t\leq t_{\rm{follow}}$. Together with the fact that $Y_N(t)$ is bounded for all $t\geq0$, we obtain
    \begin{equation*}
        \mathbb{E}_{x_0}(Y_N(t))=\mathbb{E}_{x_0}(Y_N(t)\mathbbm{1}_{Y_N(t)\in A_t})+\mathbb{E}_{x_0}(Y_N(t)\mathbbm{1}_{Y_N(t)\in A_t^c})=y(t)+O(N^{-\frac{1-h}{2}}),
    \end{equation*}
    and using this and a similar argument implies
    \begin{align}\label{varbound}   \mathrm{Var}_{x_0}\left(Y_N(t)\right)&=\mathbb{E}_{x_0}\left(\left(Y_N(t)-\mathbb{E}_{x_0}(Y_N(t))\right)^2\mathbbm{1}_{Y_N(t)\in A_t}\right)\nonumber\\
    &\qquad\qquad+\mathbb{E}_{x_0}\left(\left(Y_N(t)-\mathbb{E}_{x_0}(Y_N(t))\right)^2\mathbbm{1}_{Y_N(t)\in A_t^c}\right)=O(N^{-(1-h)}),
    \end{align}
    for  $t\leq t_{\rm{follow}}$.
    Rewriting \eqref{yb} to take advantage of these bounds, we have
    \begin{equation}\label{eq:inteq22}
        \mathbb{E}_{x_0}(Y_N(t))
        	=y_0-\int_0^t J\mathbb{E}_{x_0}\left(Y_N(s)\right)\,ds
		-\int_0^t \lambda\left( \mathbb{E}_{x_0}Y_N(s)\right)^2\, ds-\int_0^t \mathrm{Var}_{x_0}\left(Y_N(s)\right)\,ds.
    \end{equation}
    With this equation and the bound~\eqref{varbound}, we can use standard differential equation comparison theorems to bound $\mathbb{E}_{x_0}(Y_N(t))$. Let $u(t):=\mathbb{E}_{x_0}(Y_N(t))$ and differentiate the integral equation~\eqref{eq:inteq22} to get the initial value problem 
    \begin{equation}\label{eq:u}
        \frac{d}{dt}u(t)= -\lambda u^2(t)-Ju(t)-\mathrm{Var}_{x_0}\left(Y_N(t)\right), \quad u(0)=y_0, \quad t\leq t_{\rm{follow}}.
    \end{equation}
    It follows from \eqref{varbound} that there exists a suitable constant $C^*>0$ such that $\mathrm{Var}_{x_0}\left(Y_N(s)\right)\leq C^*N^{-(1-h)}$ and so we obtain 
    \begin{equation*}
        \frac{d}{dt}u(t)\geq -\lambda u^2(t)-Ju(t)-C^*N^{-(1-h)}, \quad u(0)=y_0, \quad t\leq t_{\rm{follow}}.
    \end{equation*}
    From  standard comparison theory of differential equations (see for example Theorem 2.2.2 in \cite{ames1997inequalities}), we have $u(t)\geq z(t)$ for all $t\geq0$, where $z(t)$ is the minimal solution to the initial value problem    
    \begin{equation*}
        \frac{\,d}{dt}z(t)= -\lambda z^2(t)-Jz(t)-C^*N^{-(1-h)}, \quad z(0)=y_0, \quad t\leq t_{\rm{follow}}.
    \end{equation*}
    
    This differential equation is of the same form as the governing equation \eqref{epssis}, so it is not hard to see that for sufficiently large $N$ such that $J^2-4\lambda C^*N^{-(1-h)}>0$, the solution is uniquely defined for all $t\geq0$ and thus
    \begin{equation*}
        u(t)\geq z(t)=c_2 + \frac{\frac{c_1}{\lambda}(y_0-c_2)}{(y_0-c_3)-(y_0-c_2)e^{-c_1t}}e^{-c_1t},
    \end{equation*}
    where
    \begin{equation*}
        c_1:=\sqrt{J^2-4\lambda C^*N^{-(1-h)}}\in(0,J), \quad c_2:=\frac{-J+c_1}{2\lambda}<0, \quad  c_3:=\frac{-J-c_1}{2\lambda}<0.
    \end{equation*}
    For any $h < 1/2$, we can write $c_1=J-\delta$, $c_2=-\frac{\delta}{2\lambda}$, and $c_3=-\frac{J}{\lambda}+\frac{\delta}{2\lambda}$, where $\delta=o(N^{-\frac{1}{2}})>0$. Using this notation, the bound above then becomes
    \begin{equation}\label{eq: mean_lower_bound}
        u(t)\geq z(t)= -\frac{\delta}{2\lambda} + \frac{\frac{J-\delta}{\lambda}(y_0+\frac{\delta}{2\lambda})}{(y_0+\frac{J}{\lambda}-\frac{\delta}{2\lambda})e^{-\delta t}-(y_0+\frac{\delta}{2\lambda})e^{-Jt}}e^{-Jt}.
    \end{equation}
    Note that if $z(0)=y_0>-\frac{\delta}{2\lambda}$ then, as $t$ increases, $z(t)$ monotonically approaches $-\frac{\delta}{2\lambda}$ from above, whereas $z(t)$ monotonically approaches $-\frac{\delta}{2\lambda}$ from below if $z(0)=y_0\in(-\frac{2J-\delta}{2\lambda}, -\frac{\delta}{2\lambda})$. (Note that $y_0 \le -\frac{2J-\delta}{2\lambda}$ is not biologically viable.) Moreover, for any fixed $t$, $z(t)$ is monotonically increasing as $y_0$ increases, which can be verified by differentiating \eqref{eq: mean_lower_bound} with respect to $y_0$. 
    
    On the other hand, since $\mathrm{Var}_{x_0}\left(Y_N(t)\right)>0$, it follows from \eqref{eq:u} that 
    \begin{equation*}
        \frac{\,d}{dt}u(t)\leq -\lambda u^2(t)-Ju(t), \quad u(0)=y_0, \quad t\geq 0.
    \end{equation*}
    Again, by the comparison theorem and using~\eqref{ydiff}, we have
    \begin{equation}\label{eq: mean_upper_bound}
        u(t)\leq y(t)=\frac{\frac{J}{\lambda}y_0}{(y_0+\frac{J}{\lambda})-y_0e^{-Jt}}e^{-Jt}.
    \end{equation}
    Similar to the behaviour of $z(t)$, if $y(0)=y_0>0$ then $y(t)$ monotonically approaches $0$ from above, whereas $y(t)$ monotonically approaches $0$ from below if $y(0) \in ( - \frac{J}{\lambda}, 0)$. (Note that $y_0 \le -\frac{J}{\lambda}$ is not biologically viable.) Also, by differentiating \eqref{eq: mean_upper_bound} with respect to $y_0$, one can see that for any fixed $t$, $y(t)$ is monotonically increasing as $y_0$ increases. 
    
    For any $c\in\mathbb{R}$, we take $t=\frac{1}{2J}\log(N)+c$, which is then non-negative for $N$ sufficiently large. If in the right-hand side of the relation~\eqref{eq: mean_lower_bound}, one has $y_0=x_0/N-x^{\star}>\alpha>0$, then, since $e^{-\delta t} \to1$ and $e^{-Jt} \to0$ as $N\to\infty$, we obtain
    \begin{align*}
        u(t)\geq z(t) \geq \left(o(1)+\left( \frac{\alpha\frac{J}{\lambda}}{\alpha+\frac{J}{\lambda}} + o(1)\right)\right) \frac{e^{-Jc}}{\sqrt{N}}.
    \end{align*}
    Therefore, there exists a constant $K_2$ depending on $\lambda, \mu, \varepsilon$ and $\alpha$ such that for $N$ large enough,
    \begin{equation}\label{eq:mean_part_2}
        u(t)\geq z(t) \geq \frac{e^{-Jc}K_2}{\sqrt{N}},
    \end{equation}
    which finishes the proof of the second statement in Lemma~\ref{mean}. For the first statement, in view of~\eqref{eq: mean_lower_bound} and \eqref{eq: mean_upper_bound}, and the monotonicity of $y(t), z(t)$ with respect to the initial value $y_0$, one has, for any $0<y_0 \leq 1-x^{\star}$, 
    \begin{equation}\label{eq: utupper}
        -\frac{\delta}{2\lambda} < z(t) ,\qquad 0<y(t) \leq \bar{y}(t),
    \end{equation}
    where
    \begin{equation*}
        \bar{y}(t):=\frac{\frac{J}{\lambda}(1-x^{\star})}{(1-x^{\star}+\frac{J}{\lambda})-(1-x^{\star})e^{-Jt}}e^{-Jt}.
    \end{equation*}
    Similarly, for any $-x^{\star} \leq y_0<-\frac{\delta}{2\lambda}$, 
    \begin{equation}\label{eq: utlower}
        \underline{z}(t) \leq z(t) \leq  y(t) < 0,
    \end{equation}
    where
    \begin{equation*}
        \underline{z}(t):=-\frac{\delta}{2\lambda} + \frac{\frac{J-\delta}{\lambda}(-x^{\star}+\frac{\delta}{2\lambda})}{(-x^{\star}+\frac{J}{\lambda}-\frac{\delta}{2\lambda})e^{-\delta t}-(-x^{\star}+\frac{\delta}{2\lambda})e^{-Jt}}e^{-Jt}.
    \end{equation*}
    Also, observe that when $y_0\in[-\frac{\delta}{2\lambda},0]$, we have $z(t), y(t) \in [-\frac{\delta}{2\lambda},0]$ for all $t\geq0$. Since $\lvert u(t) \rvert \leq \lvert z(t) \rvert \vee \lvert y(t) \rvert$, it follows from \eqref{eq: utupper} and \eqref{eq: utlower} and the previous observation that
    \begin{equation*}
    \lvert u(t) \rvert\leq
        \begin{cases}
            -\underline{z}(t), & y_0\in [-x^{\star},-\frac{\delta}{2\lambda}),\\
            \frac{\delta}{2\lambda}, & y_0\in[-\frac{\delta}{2\lambda},0],\\
            \frac{\delta}{2\lambda}\vee\bar{y}(t), & y_0\in (0, 1-x^{\star}].
        \end{cases}
    \end{equation*}
    
    Now, fix any $h < 1/2$, so that $\delta=o(N^{-\frac{1}{2}})>0$. The proof is completed by noticing that for some positive constants $K_1^*, K_1'$ depending only on $\lambda, \mu$ and $\varepsilon,$ one has
    \begin{equation*}
        \bar{y}(t)\leq \frac{e^{-Jc}K_1^*}{\sqrt{N}}, \quad -\underline{z}(t)\leq \frac{e^{-Jc}K_1'}{\sqrt{N}},
    \end{equation*}
    for $N$ large enough.
\end{proof}
\section{Proof of the upper bound for the mixing time}\label{coupling}
In this section, we prove the upper bound in Theorem \ref{cutoff}, which in particular shows 
that, for a population of size $N$, the \textit{mixing time} of the Markov chain $X_N$ is asymptotically at most $\frac{1}{2J}\log N$. 

Recall $\rho_N(t)=\max_{x\in\{0,1,\ldots,N\}}\lVert P^t_N(x,\cdot)-\pi_N \rVert_{TV}$ and $t_N=\frac{1}{2J}\log(N)$ from Definition~\ref{cutoffphenomenon} and Theorem~\ref{cutoff}.
\begin{lemma}[Cutoff upper bound, rapid mixing]\label{rapidlymixing}
For all $\xi\geq0$, we have 
\begin{equation*}
    \limsup_{N\to\infty}\rho_N\left(t_N+\xi\right)\leq \psi_1(\xi),
\end{equation*}
where $\psi_1(\cdot)$ is a non-negative real valued function depending only on $\lambda, \mu$ and $\varepsilon$ and $\psi_1(\xi)=O(1/\sqrt{\xi})$ as $\xi\to\infty$.
\end{lemma}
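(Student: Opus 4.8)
The plan is to dominate $\rho_N(t_N+\xi)$ by a coupling time. By stationarity $\pi_N=\sum_y\pi_N(y)P_N^{t}(y,\cdot)$, so the triangle inequality gives $\rho_N(t)\le\max_{x,y}\lVert P_N^t(x,\cdot)-P_N^t(y,\cdot)\rVert_{\mathrm{TV}}\le\max_{x,y}\mathbb{P}_{x,y}(\tau>t)$, where $(X_N,X_N')$ is the coalescing coupling: started from $X_N(0)=x$, $X_N'(0)=y$, the two copies run independently until $\tau:=\inf\{t:X_N(t)=X_N'(t)\}$ and then move together. This is well defined because each copy moves by unit steps, so $D_N(t):=X_N(t)-X_N'(t)$ changes by $\pm1$ at the a.s.\ distinct jump times of the two independent copies and hence reaches $0$. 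It thus suffices to produce a non-negative $\psi_1$ with $\psi_1(\xi)=O(1/\sqrt\xi)$ and $\limsup_N\max_{x,y}\mathbb{P}_{x,y}(\tau>t_N+\xi)\le\psi_1(\xi)$, which I would obtain by controlling $D_N$ over three consecutive windows. In the \emph{burn-in} window $[0,t_1]$, $t_1:=\frac{1-h}{2J}\log N$ (fixed $h\in(0,1)$, ultimately $h=\tfrac12$), Lemma~\ref{detb} shows the solution of~\eqref{epssis} from any $x(0)\in[0,1]$ is within $CN^{-(1-h)/2}$ of $x^\star$ by time $t_1$ and stays there, so by Lemma~\ref{following} (applied to each copy) there is an event $G$ of probability $1-o(1)$ uniformly in $x,y$ on which both copies remain within $C_3N^{(1+h)/2}$ of $x^\star N$ throughout $[t_1,t_N+\xi/2]$; the time-$t_1$ version of this is Corollary~\ref{burn}.

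\emph{Intermediate window $[t_1,t_N+\xi/2]$.} Restarting the semimartingale representation~\eqref{expsto} at $t_1$ for each copy, subtracting, and writing $Y_N:=X_N/N-x^\star$ and $M_N$ for the martingale of Lemma~\ref{yrep},
\begin{align*}
Y_N(t)-Y_N'(t)&=e^{-J(t-t_1)}\bigl(Y_N(t_1)-Y_N'(t_1)\bigr)-\lambda\int_{t_1}^{t}e^{-J(t-s)}\bigl(Y_N(s)-Y_N'(s)\bigr)\bigl(Y_N(s)+Y_N'(s)\bigr)\,ds\\
&\qquad\qquad+\int_{t_1}^{t}e^{-J(t-s)}\,d(M_N-M_N')(s).
\end{align*}
On $G$ one has $|Y_N(s)|+|Y_N'(s)|=O(N^{-(1-h)/2})=o(1/\log N)$ over the window, so a Gr\"onwall estimate makes the first two terms $O(e^{-J(t-t_1)}N^{-(1-h)/2})+O(N^{-(1-h)})$, hence $O(N^{-1/2})$ at $t=t_N+\xi/2$ once $h\le\tfrac12$. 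The martingale term I would control with Lemma~\ref{expmartingalebound} ($\widetilde A=-J$, $B=1/N$, $K=O(1/N)$, as both copies have total jump rate $\Theta(N)$ on $G$); crucially, unlike in the proof of Lemma~\ref{following} — where a uniform bound over a stretched-exponential horizon forces the deviation parameter $\omega\asymp N^h$ and a fluctuation of order $N^{(1+h)/2}$ — only a single-time (equivalently, constant-horizon) estimate is needed here, so a bounded $\omega$ suffices and yields a genuinely sub-Gaussian tail at scale $\sqrt N$. Combining, conditionally on $G$ the variable $D_N(t_N+\xi/2)=N(Y_N-Y_N')(t_N+\xi/2)$ satisfies $\mathbb{P}_{x,y}(|D_N(t_N+\xi/2)|>s,\,G)\le\exp(-c\,s^2/N)$ for $s\ge C'\sqrt N$; in particular $\mathbb{E}\bigl[\mathbbm{1}_G\,|D_N(t_N+\xi/2)|\bigr]=O(\sqrt N)$. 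This is the step I would record as Lemma~\ref{med}.

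\emph{Final window $[t_N+\xi/2,\,t_N+\xi]$ and conclusion.} On $G$, Lemma~\ref{following} keeps both copies within $o(N)$ of $x^\star N$ over this constant-length window; since the linearisation of the drift at $x^\star$ has rate $-J<0$ (cf.~\eqref{ydiff}), the independent difference $D_N$ there has drift $-(J+o(1))D_N$, i.e.\ restoring toward $0$, and jump rate $\Theta(N)$, so $|D_N|$ is stochastically dominated by an unbiased continuous-time random walk with jump rate of order $N$. By the reflection principle and concentration of the (Poisson) number of steps, such a walk started at level $s$ fails to reach $0$ within time $\xi/2$ with probability $O(\min\{1,\,s/\sqrt{N\xi}\})$. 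Integrating against the (essentially sub-Gaussian, scale $\sqrt N$) law of $|D_N(t_N+\xi/2)|$,
\begin{equation*}
\mathbb{P}_{x,y}(\tau>t_N+\xi)\ \le\ o(1)+\int_0^{\infty}O\bigl(\min\{1,\,s/\sqrt{N\xi}\}\bigr)\,\mathbb{P}_{x,y}\bigl(|D_N(t_N+\xi/2)|\in ds\bigr)\ =\ o(1)+O(1/\sqrt\xi),
\end{equation*}
uniformly in $x,y$, using $\mathbb{P}_{x,y}(|D_N(t_N+\xi/2)|>\sqrt{N\xi},\,G)\le e^{-c\xi}$ and $\tfrac{1}{\sqrt{N\xi}}\,\mathbb{E}[\mathbbm{1}_G\,|D_N(t_N+\xi/2)|]=O(1/\sqrt\xi)$. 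Taking $\limsup_{N\to\infty}$ gives the lemma with $\psi_1(\xi):=A/\sqrt{\xi}+Be^{-c\xi}$ (capped at $1$), $A,B,c$ depending only on $\lambda,\mu,\varepsilon$; this is Lemma~\ref{final}.

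\emph{Main obstacle.} The crux is the intermediate step. Lemma~\ref{following} alone pins each copy only to within $O(N^{(1+h)/2})\gg\sqrt N$ of $x^\star N$, whereas the random-walk comparison in the final window needs $\sqrt N$-scale, sub-Gaussian control of $D_N(t_N+\xi/2)$. That finer control has to be squeezed out of two facts: the mean reversion of the independent difference (drift $-JD_N$, from the linearisation rate $-J$ at $x^\star$), which contracts the deterministic part down to the $\sqrt N$ scale once one restarts at $t_1$; and a re-application of the exponential martingale inequality of Lemma~\ref{expmartingalebound} with its deviation parameter tuned for a single-time estimate instead of a long-horizon uniform bound, which is what turns the $O(N^{(1+h)/2})$ fluctuation of Lemma~\ref{following} into a sub-Gaussian tail at scale $\sqrt N$. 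A secondary technical nuisance is making the final-window random-walk domination rigorous: verifying that $X_N$ and $X_N'$ stay close enough to $x^\star N$ over the last window that the drift of $D_N$ stays non-positive and its rate $\Theta(N)$, and handling the state-dependence of that rate (e.g.\ by stopping at the exit time from the good region).
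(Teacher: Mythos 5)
Your overall three-phase coupling strategy (burn-in to the good set, contraction to the $\sqrt N$ scale, then a random-walk hitting-time argument) is exactly the paper's, and the end result is obtained by the same kind of integration of a hitting-time tail against the law of $D_N$ at the end of the intermediate window. The substantive divergence is in how you propose to establish the intermediate estimate, and here there is a real gap. You claim that Lemma~\ref{expmartingalebound} applied ``with bounded $\omega$'' yields a sub-Gaussian tail for $D_N(t_N+\xi/2)$ at scale $\sqrt N$. But that lemma is a stopping-time bound whose validity window is $\sigma\lceil e^{\omega/8}\rceil$; with $\sigma=1/J$ and $\omega$ bounded this only covers a bounded time horizon, whereas your intermediate window $[t_1,t_N+\xi/2]$ has length $\tfrac{h}{2J}\log N+\xi/2\to\infty$. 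Forcing $\sigma\lceil e^{\omega/8}\rceil$ to cover this window pushes $\omega$ up to order $\log\log N$, and then the deviation level $e\sqrt{\omega k/N}$ is a $\sqrt{\log\log N}$ factor worse than $1/\sqrt N$, which is not what you assert. (The decay factor $e^{-J(t-s)}$ does suppress the early part of the martingale integral, but turning that into a genuine single-time exponential tail bound needs a different tool, e.g.\ a Freedman-type inequality, not Lemma~\ref{expmartingalebound} as stated.)

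Fortunately, the sub-Gaussian claim is more than you need, and the quantity your final integration actually uses — $\mathbb{E}\bigl[\mathbbm 1_G\,|D_N(t_N+\xi/2)|\bigr]=O(\sqrt N)$, or more precisely $O(\sqrt N\,e^{-J\xi/2})$ — is exactly what the paper proves in Lemma~\ref{med}, and it proves it without any exponential martingale inequality: one simply writes the Dynkin martingale for the two-dimensional chain $(W_N,Z_N)$, takes expectations (the martingale term has zero mean, so no tail bound is needed), observes that while both copies stay in the good set the drift of $D_N$ is at most $-(J-2\lambda\eta)D_N$, applies Gr\"onwall to get $\mathbb{E}\bigl[D_N(t)\mathbbm 1_{t<\tau_{\rm exit}}\bigr]\le d_0 e^{-(J-2\lambda\eta)t}$, and then uses Markov's inequality. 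Replacing your sub-Gaussian claim by this first-moment argument repairs the intermediate step and actually simplifies it. For the final phase, your reflection-principle domination is the right intuition, but you flag and do not resolve the issue of the drift/rate conditions only holding on the good set; the paper handles this cleanly by introducing a reflected chain $\overline X_N$ that agrees with $X_N$ up to the exit time, verifying the hypotheses of Proposition~\ref{RWBound} for the coupled reflected pair, and transferring the bound back. With those two repairs your proposal matches the paper's proof.
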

Let $Z_N(t)$ and $W_N(t)$ be two copies of the process $X_N(t)$, with initial states $Z_N(0)\geq W_N(0)$, moving independently until the \textit{coalescence time}, which is defined as
\begin{equation*}
    \tau_{\text{couple}}:=\inf\{t: W_N(t)=Z_N(t)\}.
\end{equation*}
After the coalescence time, the states in the two copies are the same and move together forever after. Note that this defines a \textit{Markovian coupling}.

By standard results (see Corollary 5.5 in \cite{levin2017markov}), to prove Lemma \ref{rapidlymixing}, it is sufficient to show the following. Write $\mathbb{P}_{w_0,z_0}$ for the probability measure of the chain given $W_N(0)=w_0$ and $Z_N(0)=z_0$.
\begin{lemma}\label{rapidmixing}
For all $w_0, z_0\in \{0,\ldots,N\}$ with $w_0\leq z_0$, and all $\xi\geq 0$, we have 
\begin{equation*}
\limsup_{N\to\infty}\mathbb{P}_{w_0,z_0}\left(\tau_{\rm{couple}}>\mbox{$\frac{1}{2J}$}\log(N)+\xi\right)\leq\psi_1(\xi),
\end{equation*}
where $\psi_1(\cdot)$ is a non-negative real valued function depending only on $\lambda, \mu$ and $\varepsilon$ and $\psi_1(\xi)=O(1/\sqrt{\xi})$ as $\xi\to\infty$.
\end{lemma}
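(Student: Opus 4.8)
The plan is to realise the bound through the three-phase scheme announced in the introduction: a burn-in phase driving both copies near $x^{\star}N$, an intermediate phase during which the gap between them contracts from order $N^{(1+h)/2}$ down to order $\sqrt N$, and a final phase during which the gap, now comparable to an unbiased random walk with jump rate of order $N$, is absorbed at $0$. Fix $h\in(0,1/2)$ once and for all (its value is immaterial) and write $D_N(t):=Z_N(t)-W_N(t)$. Since $Z_N$ and $W_N$ are independent $\pm1$ jump processes they never jump simultaneously, so $D_N(t)\ge0$ for $t<\tau_{\mathrm{couple}}$ and $\tau_{\mathrm{couple}}$ is precisely the first hitting time of $0$ by $D_N$. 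For the \emph{burn-in}, apply Lemma~\ref{following} separately to $Z_N$ and to $W_N$ with this $h$; combining the uniform control of $\sup_{t\le t_{\mathrm{follow}}}|X_N(t)/N-x(t)|$ with the decay estimate $|x(t)-x^{\star}|\le Ce^{-Jt}$ of Lemma~\ref{detb}, we obtain an event $\mathcal E$ with $\mathbb P(\mathcal E^c)\le 8e^{-C_1N^h}$ on which both copies lie within $R_N:=O(N^{(1+h)/2})$ of $x^{\star}N$ for all $t\in[t_1,t_{\mathrm{follow}}]$, where $t_1:=\tfrac{1-h}{2J}\log N$. On $\mathcal E$ and for $t\ge t_1$ (a range comfortably containing $t_N+\xi$): the up- and down-rates of each copy are both of order $N$ since $x^{\star}\in(0,1)$, so the total jump rate of $D_N$ is of order $N$; using $F_X(x)=-\tfrac{\lambda}{N}(x-x^{\star}N)^2-J(x-x^{\star}N)$, the drift of $D_N$ is $F_X(Z_N)-F_X(W_N)=-\bigl(J+O(R_N/N)\bigr)D_N$, which is strictly negative whenever $D_N\ge1$ for $N$ large; consequently $D_N$ steps down with probability at least $1/2$ at each of its jumps; and $D_N(t_1)\le 2R_N$.

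For the \emph{intermediate} phase, run $D_N$ from $t_1$ to $t_2:=t_1+\tfrac{h}{2J}\log N+\xi/2=t_N+\xi/2$. Using the Dynkin martingale of $D_N$ together with the contractive drift valid on $\mathcal E$, and stopping at the exit time from the good ball (whose contribution is the exponentially small $\mathbb P(\mathcal E^c)$ times a polynomial factor, hence negligible), a Gr\"onwall/comparison argument of the type used to prove Lemma~\ref{following} yields $\mathbb E[D_N(t\wedge\tau_{\mathrm{couple}})\mid\mathcal F_{t_1}]\le (1+o(1))D_N(t_1)e^{-J(t-t_1)}$ for $t_1\le t\le t_2$. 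Since $D_N(t_1)\le 2R_N=O(N^{(1+h)/2})$ and $e^{-J(t_2-t_1)}=N^{-h/2}e^{-J\xi/2}$, this gives $\mathbb E[D_N(t_2\wedge\tau_{\mathrm{couple}})]\le O\bigl(\sqrt N\,e^{-J\xi/2}\bigr)$, so by Markov's inequality $\mathbb P\bigl(\tau_{\mathrm{couple}}>t_2,\ D_N(t_2)>\sqrt N\bigr)\le O(e^{-J\xi/2})+\mathbb P(\mathcal E^c)$.

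For the \emph{final} phase, work on $\mathcal E\cap\{\tau_{\mathrm{couple}}>t_2,\ D_N(t_2)\le\sqrt N\}$ and run $D_N$ from $t_2$. Because the drift of $D_N$ is $\le0$, the stopped process $D_N(\cdot\wedge\sigma)$ (with $\sigma$ the exit time from the good ball) is a non-negative supermartingale on $\mathcal E$, so optional stopping gives $\mathbb P\bigl(D_N\text{ reaches }N^{3/4}\text{ before }0\bigr)\le \sqrt N/N^{3/4}\to0$. On the complement, while $0<D_N<N^{3/4}$ the embedded jump chain of $D_N$ is dominated above, by a monotone coupling, by a simple symmetric $\pm1$ walk started from $D_N(t_2)\le\sqrt N$, while the jumps of $D_N$ occur at rate at least $cN$ for some $c>0$; a simple symmetric walk started at $\sqrt N$ reaches $0$ within $M$ steps with probability at least $1-O(\sqrt N/\sqrt M)$, and with probability $1-o(1)$ at least $c'N\xi$ jumps of $D_N$ have occurred by time $t_2+\xi/2$. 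Hence $\mathbb P\bigl(\tau_{\mathrm{couple}}>t_2+\xi/2\mid\mathcal E,\ \tau_{\mathrm{couple}}>t_2,\ D_N(t_2)\le\sqrt N\bigr)=O(1/\sqrt\xi)+o(1)$. Combining the failure probabilities of the three phases and letting $N\to\infty$ gives $\limsup_{N\to\infty}\mathbb P_{w_0,z_0}(\tau_{\mathrm{couple}}>t_N+\xi)\le O(e^{-J\xi/2})+O(1/\sqrt\xi)=:\psi_1(\xi)$, a function of $\lambda,\mu,\varepsilon$ alone with $\psi_1(\xi)=O(1/\sqrt\xi)$ as $\xi\to\infty$.

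The step I expect to be the main obstacle is the final phase. The restoring drift of $D_N$ is only of order $\sqrt N$ whereas its jump rate is of order $N$, so one cannot dominate $D_N$ by a biased random walk with a useful negative drift: replacing the up- and down-rates by their separate supremum and infimum over the good ball would introduce a spurious upward bias of order $R_N=N^{(1+h)/2}\gg\sqrt N$, destroying the estimate. The way around this is to exploit the \emph{exact} supermartingale property of $D_N$ to confine it below $N^{3/4}$ first, and only then to compare its embedded chain to an unbiased walk and its clock to a Poisson process of rate of order $N$, using the $O(1/\sqrt{\text{number of steps}})$ tail of the hitting time of $0$ for the symmetric walk. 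A secondary technical point is the bookkeeping needed to make the conditioning across the three phases fit together cleanly, since the ``good'' events are not adapted.
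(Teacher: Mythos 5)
Your three-phase decomposition (burn-in to the good set by time $\tfrac{1-h}{2J}\log N$, contraction of the gap to order $\sqrt N$ over a further $\tfrac{h}{2J}\log N+\tfrac{\xi}{2}$, then coalescence in constant extra time) is exactly the paper's strategy, and your Phases 1 and 2 are essentially the paper's Corollary~\ref{burn} and Lemma~\ref{med} (Dynkin martingale plus Gr\"onwall giving $\mathbb E[D_N(t)\mathbbm{1}_{t<\tau_{\rm exit}}]\le d_0 e^{-(J-2\lambda\eta)t}$, then Markov). Where you diverge is the final phase: the paper invokes the packaged hitting-time bound of \cite[Proposition~4.1]{barbour2022long} (Proposition~\ref{RWBound}) after replacing the pair $(W_N,Z_N)$ by a reflected chain $(\overline W_N,\overline Z_N)$ that literally satisfies the non-positive-drift hypothesis on its whole state space, whereas you give a bare-hands argument via the maximal inequality for the stopped supermartingale $D_N(\cdot\wedge\sigma)$, stochastic domination of the embedded chain of $D_N$ by an unbiased $\pm1$ walk (valid because the conditional down-probability at each jump exceeds $1/2$ inside the good set), and a Poisson lower bound on the number of jumps. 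This is the same underlying random-walk estimate that Proposition~4.1 encapsulates, so both give $O(1/\sqrt\xi)$; the paper's route is tidier because the reflected chain makes the drift and variance hypotheses hold globally, whereas yours requires tracking the exit time throughout.

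Two specific remarks on your Phase 3. First, the $N^{3/4}$ confinement step is superfluous: the domination of the embedded chain by a symmetric walk and the lower bound $\ge cN$ on the jump rate hold at \emph{every} state in the good set, irrespective of the current size of $D_N$, so there is nothing to gain from first confining $D_N$ below $N^{3/4}$. The concern you raise about a ``spurious upward bias of order $R_N$'' only arises if you try to bound the up- and down-rates by state-independent constants; the step-by-step domination you actually use (couple a single uniform per jump to both walks) never introduces that bias. Second, the bookkeeping problem you flag as secondary is in fact the one place where the argument as written does not go through: you condition on the event $\mathcal E$ that both copies remain in the good set up to $t_{\rm follow}$, but $\mathcal E$ is not $\mathcal F_{t_2}$-measurable, so neither the supermartingale property nor the step-by-step domination holds under $\mathbb P(\,\cdot\mid\mathcal E)$. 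The fix is exactly what the paper does in Lemma~\ref{final}: replace conditional probabilities by probabilities of joint events, e.g.\ bound $\mathbb P_{w_0,z_0}(\tau_0>\xi/2,\,\tau_{\rm exit}>\xi/2)$ directly, using that the drift and variance estimates hold for the process stopped at $\tau_{\rm exit}$ (or, equivalently, pass to the reflected chain, for which no stopping is needed). With that rephrasing your final phase is correct.
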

Before we dive into the details, we give an outline of the proof. As mentioned in the introduction, we will break up the analysis into the following phases. 
\begin{itemize}
    \item[] \textbf{Initial phase}: We use the concentration result from the Section \ref{priorconcen} to show that with high probability, by the time $\frac{1-h}{2J}\log(N)$, both copies $W_N(t), Z_N(t)$ will be in the \emph{interior} of a \emph{good set} consisting of a ball of size $O(N^{\frac{1+h}{2}})$ around $x^{\star}N$. We also prove that if the copies start anywhere in the interior of the good set, then with high probability they remain in the good set for a time period that is exponential in $N$.
    \item[] \textbf{Intermediate phase}: After both copies reach the interior of the good set, the independent coupling of the chains is contractive while the copies stay in the good set. As a consequence, after another $\frac{h}{2J}\log(N)$ time (plus a constant time), the distance between them will drop to $\sqrt{N}$ with high probability. 
    \item[] \textbf{Final phase}: Once the copies are within $\sqrt{N}$ of each other and given that they are still in the good set, we show the coalescence time of the copies is no greater than the time it takes for an unbiased random walk (with transition rate of order $N$, so on average, there are $N$ events happening in a unit time period) to hit $0$, which is at most a time of constant order with high probability.
\end{itemize}

Throughout the rest of this section, we use $\mathbb{P}_{x_0}(\cdot)$ to denote the underlying probability measure of the process $X_N(t)$ with starting state $X_N(0)=x_0$. 
\subsection{Initial phase}
We first show that with high probability uniformly over all starting states, by the time $\frac{1-h}{2J}\log(N)$, the process $X_N(t)$ will be within $O(N^{\frac{1+h}{2}})$ of the fixed point $x^{\star}N$. This will be done by a simple application of Lemma \ref{following}, on account of Proposition \ref{detsol}. 

To state the result, we first need to introduce some notation, which will be used frequently later. Consider intervals of the form 
\begin{equation}\label{def:interval}
    I(r):=\left[x^{\star}-r, x^{\star}+r\right], \quad r>0.
\end{equation}
We denote the first time that the scaled chain leaves an interval $I$ by
\begin{equation}\label{xexit}
    \tau^{X}_{\rm{exit}}(I):=\inf\{t\geq 0: X_N(t)/N\notin I\}.
\end{equation}
To prove the upper bound for the mixing time, we fix a $h\in(0,1)$ and consider the following set to be the good set,
\begin{equation}\label{goodset}
    \widehat{S}\equiv\widehat{S}(N):=I\left(\eta\right), \quad \eta\equiv\eta(N):=2(C_2+C_3)N^{-\frac{1-h}{2}}, 
\end{equation}
where $C_2$ was defined in Lemma~\ref{following} and $C_3:=\left(\frac{J}{\lambda} \frac{x^{\star}}{\lvert x_1^{\star}\rvert}\right) \vee (1-x^{\star})$. We also define the interior of the good set by $\widetilde{S}\equiv\widetilde{S}(N):=I\left(\frac{\eta}{2}\right)$. Moreover, to simplify the notation, we write $\tau^{X}_{\rm{exit}}$ for $\tau^{X}_{\rm{exit}}(\widehat{S})$, i.e. the first exit time of $\widehat{S}$. 

Recall that $t_{\rm{follow}}=\mbox{$\frac{1}{J}$}\lceil e^{C_1N^{h}}\rceil$ from \eqref{tfollow}, where $C_1$ was defined in Lemma~\ref{following}. We prove the following lemma, where the first part of the lemma serves as the initial phase and the second part controls the exit time of the interval $I(r)$
\begin{lemma}\label{burn_x}
\hfill
\begin{enumerate}
    \item[(1)] For any $h\in(0,1)$ and $x_0\in\{0,\ldots, N\}$, one has  
    \begin{equation*}
        \mathbb{P}_{x_0}\left(N^{-1}X_N\left(\frac{1-h}{2J}\log(N)\right)\in\widetilde{S}\right)\geq 1-4e^{-C_1N^h},
    \end{equation*}
    for $N$ sufficiently large such that $\frac{1-h}{2J}\log(N)\leq t_{\rm{follow}}$ and as per Lemma~\ref{following}.
    \item[(2)] For any $h\in(0,1)$, suppose $r=r(N)$ satisfies the following condition 
    \begin{equation}\label{cond:eta}
        \frac{r}{2C_2}N^{\frac{1-h}{2}}\geq 1, \quad \text{as} \quad N\to\infty,
    \end{equation}
    and $x_0/N\in I(\frac{r}{2})$. Then we have 
    \begin{equation*}
        \mathbb{P}_{x_0}(\tau^{X}_{\rm{exit}}(I(r))> t_{\rm{follow}})\geq 1-4e^{-C_1N^h}.
    \end{equation*} 
    In particular, $r=\eta(N)$ satisfies \eqref{cond:eta} and so we have $\mathbb{P}_{x_0}(\tau^{X}_{\rm{exit}}> t_{\rm{follow}})\geq 1-4e^{-C_1N^h}$ for all $x_0/N\in \widetilde{S}$.
\end{enumerate}
\end{lemma}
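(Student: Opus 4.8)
The plan is to read off both statements directly from the deterministic concentration estimate of Lemma~\ref{following}, combined with the elementary contraction of the ODE~\eqref{epssis} towards $x^{\star}$. Throughout, let $x(t)$ denote the solution of~\eqref{epssis} with $x(0)=X_N(0)/N\in[0,1]$ and set $y(t):=x(t)-x^{\star}$. The only deterministic fact needed beyond Lemma~\ref{following} is that $|y(t)|$ is non-increasing in $t$: since $dx/dt>0$ on $[0,x^{\star})$ and $dx/dt<0$ on $(x^{\star},1]$ (the phase-line argument of Section~\ref{detsismodel}), a trajectory started in $[0,1]$ approaches $x^{\star}$ monotonically and never crosses it, so $|y(t)|\le|y(0)|$ for all $t\ge0$.

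For part~(1), I would first record the uniform bound $|y(t)|\le C_3 e^{-Jt}$, valid for all $t\ge0$ and every $x(0)\in[0,1]$; this follows from the explicit solution~\eqref{parsolepssis} exactly as in the proof of Lemma~\ref{detb}, by treating $y(0)\ge0$ (where the denominator $\frac{J}{\lambda}+y(0)(1-e^{-Jt})$ is at least $\frac{J}{\lambda}$, so $|y(t)|\le|y(0)|e^{-Jt}\le(1-x^{\star})e^{-Jt}$) and $y(0)<0$ (where the denominator is at least $\frac{J}{\lambda}-x^{\star}=|x_1^{\star}|$, so $|y(t)|\le\frac{J}{\lambda}\frac{x^{\star}}{|x_1^{\star}|}e^{-Jt}$) separately, and taking the maximum of the two prefactors, which is $C_3$. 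Evaluating at $t=\frac{1-h}{2J}\log N$ gives $e^{-Jt}=N^{-\frac{1-h}{2}}$, hence $|x(t)-x^{\star}|\le C_3 N^{-\frac{1-h}{2}}$. For $N$ large enough that $\frac{1-h}{2J}\log N\le t_{\rm{follow}}$, Lemma~\ref{following} gives $|X_N(t)/N-x(t)|\le C_2 N^{-\frac{1-h}{2}}$ off an event of probability at most $4e^{-C_1 N^h}$, and the triangle inequality yields $|X_N(t)/N-x^{\star}|\le(C_2+C_3)N^{-\frac{1-h}{2}}=\eta/2$, i.e.\ $X_N(t)/N\in\widetilde{S}$.

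For part~(2), I would work on the same good event of probability at least $1-4e^{-C_1 N^h}$ supplied by Lemma~\ref{following}, on which $\sup_{t\le t_{\rm{follow}}}|X_N(t)/N-x(t)|\le C_2 N^{-\frac{1-h}{2}}$. From $x_0/N\in I(r/2)$ we get $|y(0)|\le r/2$, so monotonicity gives $|x(t)-x^{\star}|=|y(t)|\le r/2$ for every $t\ge0$; combining, $|X_N(t)/N-x^{\star}|\le r/2+C_2 N^{-\frac{1-h}{2}}$ for all $t\le t_{\rm{follow}}$. Condition~\eqref{cond:eta} is exactly what makes $C_2 N^{-\frac{1-h}{2}}\le r/2$ for all large $N$, so the right-hand side is at most $r$ and $X_N(t)/N\in I(r)$ throughout $[0,t_{\rm{follow}}]$, i.e.\ $\tau^{X}_{\rm{exit}}(I(r))>t_{\rm{follow}}$ on the good event. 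The final claim follows since $r=\eta$ satisfies~\eqref{cond:eta} (because $\frac{\eta}{2C_2}N^{\frac{1-h}{2}}=(C_2+C_3)/C_2\ge1$) and $\widetilde{S}=I(\eta/2)$, so with $\widehat{S}=I(\eta)$ we obtain $\mathbb{P}_{x_0}(\tau^X_{\rm{exit}}>t_{\rm{follow}})\ge1-4e^{-C_1 N^h}$ for all $x_0/N\in\widetilde{S}$.

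There is no real obstacle beyond bookkeeping, but one should keep the deterministic comparison sharp: the crude bound $|y(t)|\le\frac{2J}{J-(\lambda-\mu-\varepsilon)}|y(0)|$ coming straight from Lemma~\ref{detb} does not suffice for part~(2), since that prefactor may exceed $1$, whereas the argument needs the genuine non-expansion $|y(t)|\le|y(0)|$; this is precisely why the monotonicity observation from Section~\ref{detsismodel} is invoked rather than Lemma~\ref{detb} itself.
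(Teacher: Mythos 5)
Your proof is correct and matches the paper's argument in both parts: you use the explicit solution to bound $\lvert x(t)-x^{\star}\rvert\le C_3 e^{-Jt}$ uniformly in the initial condition, then apply Lemma~\ref{following} and a triangle inequality for part~(1), and for part~(2) you combine the monotonicity of $x(t)$ with Lemma~\ref{following} and condition~\eqref{cond:eta}. Your careful case-split on the sign of $y(0)$ to recover exactly the paper's constant $C_3$ (which is sharper than what the stated bound of Lemma~\ref{detb} provides) makes precise the paper's terser phrase ``a simple calculation using Lemma~\ref{detb},'' and your closing remark correctly identifies why the non-expansive bound $\lvert y(t)\rvert\le\lvert y(0)\rvert$, rather than Lemma~\ref{detb} itself, is needed in part~(2).
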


\begin{proof}
Recall the solution to the governing equation of the deterministic $\varepsilon$-SIS model given in \eqref{parsolepssis}. A simple calculation using Lemma~\ref{detb} shows that 
\begin{equation}\label{x(t)bound}
    \left\lvert x\left( \mbox{$\frac{1-h}{2J}$}\log(N)\right) - x^{\star}\right\rvert\leq C_3N^{-\frac{1-h}{2}},
\end{equation}
uniformly for $x(0)\in \{0,\ldots, N\}$. It follows from a union bound and Lemma \ref{following} that, for large enough $N$ such that $\frac{1-h}{2J}\log(N)\leq t_{\rm{follow}}$,
\begin{align*}
    \mathbb{P}_{x_0}&\left(\left\lvert X_N\left(\mbox{$\frac{1-h}{2J}$}\log(N)\right)-x^{\star}N\right\rvert > (C_2+C_3)N^{\frac{1+h}{2}}\right)\\  
    &\qquad\leq \mathbb{P}_{x_0}\left(\left\lvert X_N\left(\mbox{$\frac{1-h}{2J}$}\log(N)\right)-x\left(\mbox{$\frac{1-h}{2J}$}\log(N)\right)N\right\rvert > C_2N^{\frac{1+h}{2}} \right)\\
    &\qquad\leq\mathbb{P}_{x_0}\left(\sup_{t\leq t_{\rm{follow}}}\left\lvert\frac{X_N(t)}{N}-x(t)\right\rvert>C_2N^{-\frac{1-h}{2}}\right)\leq4e^{-C_1N^h},
\end{align*}
which completes the proof of the first statement. For the second statement, note that, given $x(0) \in I(\frac{r}{2})$, by the monotonicity of $x(t)$ in $t$, we have $x(t)\in I(\frac{r}{2})$ for all $t\geq 0$. The condition \eqref{cond:eta}, implies that $I(\frac{r}{2}+C_2N^{-\frac{1-h}{2}}) \subset I(r)$ for sufficiently large $N$. Moreover, it follows from Lemma \ref{following} that the probability that $X_N(t)/N$ is within $C_2N^{-\frac{1-h}{2}}$ of $x(t)$ for all $t\leq t_{\rm{follow}}$ is at least $1-4e^{-C_1N^h}$. Together with the previous observation, this implies that the probability that $X_N(t)/N \in I(\frac{r}{2}+C_2N^{-\frac{1-h}{2}}) \subset I(r)$  for all $t\leq t_{\rm{follow}}$ is at least $1-4e^{-C_1N^h}$. 
\end{proof}
The next corollary states the result of Lemma \ref{burn_x} in terms of the  two coupled copies $W_N(t)$ and $Z_N(t)$. That is, by the time $\frac{1-h}{2J}\log(N)$, both $W_N(t), Z_N(t)$ have entered the interior of the good set with high probability. Moreover, once they have reached the interior, they will then remain in the good set for at least $t_{\rm{follow}}$ time which is exponential in $N$ with high probability. Let
\begin{align}\label{texit}
    \tau_{\rm{exit}}:=\inf\left\{t\geq 0: W_N(t)/N \notin \widehat{S} \text{\, or \,} Z_N(t)/N \notin \widehat{S} \right\},
\end{align}
be the first time either copy leaves the good set. The corollary follows directly from Lemma \ref{burn_x} by applying a union bound.
\begin{corollary}\label{burn}
For any $h\in(0,1)$, the following holds for $N$ sufficiently large such that $\frac{1-h}{2J}\log(N)\leq t_{\rm{follow}}$ and as per Lemma~\ref{following}:
\begin{enumerate}
    \item[(1)] For any $w_0, z_0\in\{0,\ldots, N\}$, 
    \begin{equation*}
        \mathbb{P}_{w_0,z_0}\left( N^{-1}W_N\left(\mbox{$\frac{1-h}{2J}$}\log(N)\right)\in\widetilde{S}, N^{-1}Z_N\left(\mbox{$\frac{1-h}{2J}$}\log(N)\right)\in\widetilde{S}\right)\geq 1-8e^{-C_1N^h}.
    \end{equation*}
    \item[(2)] Suppose $w_0/N\in\widetilde{S}, z_0/N \in\widetilde{S}$, then we have $\mathbb{P}_{w_0,z_0}(\tau_{\rm{exit}}> t_{\rm{follow}})\geq 1-8e^{-C_1N^h}$.
\end{enumerate}
\end{corollary}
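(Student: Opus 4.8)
The plan is to derive both statements directly from Lemma~\ref{burn_x} together with a union bound, exploiting the fact that, marginally, each of $W_N$ and $Z_N$ is a copy of the chain $X_N$.

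For part~(1), I would first observe that $W_N$ is distributed as $X_N$ started from $w_0$ and $Z_N$ as $X_N$ started from $z_0$. Applying Lemma~\ref{burn_x}(1) to each of them separately gives
\[
\mathbb{P}_{w_0}\!\left(N^{-1}W_N\!\left(\tfrac{1-h}{2J}\log N\right)\notin\widetilde{S}\right)\le 4e^{-C_1N^h},
\qquad
\mathbb{P}_{z_0}\!\left(N^{-1}Z_N\!\left(\tfrac{1-h}{2J}\log N\right)\notin\widetilde{S}\right)\le 4e^{-C_1N^h},
\]
valid for all $N$ large enough that $\frac{1-h}{2J}\log N\le t_{\rm follow}$ and as per Lemma~\ref{following}. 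The complement of the event in the statement is contained in the union of these two events, so the union bound yields the claimed lower bound $1-8e^{-C_1N^h}$. Note that independence of the two copies is not needed here; the union bound suffices (independence would only improve the constant, which is immaterial).

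For part~(2), I would apply Lemma~\ref{burn_x}(2) with $r=\eta(N)$. Recall from \eqref{goodset} that $\widehat{S}=I(\eta)$ and $\widetilde{S}=I(\eta/2)$, and that, as recorded in the statement of Lemma~\ref{burn_x}(2), $r=\eta(N)$ satisfies condition~\eqref{cond:eta}. Hence, since $w_0/N\in\widetilde{S}=I(\eta/2)$, we obtain $\mathbb{P}_{w_0}\bigl(\tau^{W}_{\rm exit}(\widehat{S})>t_{\rm follow}\bigr)\ge 1-4e^{-C_1N^h}$, and similarly $\mathbb{P}_{z_0}\bigl(\tau^{Z}_{\rm exit}(\widehat{S})>t_{\rm follow}\bigr)\ge 1-4e^{-C_1N^h}$. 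Since $\tau_{\rm exit}=\tau^{W}_{\rm exit}(\widehat{S})\wedge\tau^{Z}_{\rm exit}(\widehat{S})$ by definition~\eqref{texit}, the event $\{\tau_{\rm exit}>t_{\rm follow}\}$ is exactly the intersection $\{\tau^{W}_{\rm exit}(\widehat{S})>t_{\rm follow}\}\cap\{\tau^{Z}_{\rm exit}(\widehat{S})>t_{\rm follow}\}$, and a second union bound gives the result.

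There is essentially no obstacle here; the only points requiring a moment's care are (i) checking that the ``$N$ sufficiently large'' hypotheses of Lemma~\ref{burn_x}(1)--(2) are precisely the ones inherited by the corollary, and (ii) confirming that $\eta(N)$ meets condition~\eqref{cond:eta} so that part~(2) of Lemma~\ref{burn_x} applies with $r=\eta(N)$ --- both of which are already supplied by the cited lemma.
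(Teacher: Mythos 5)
Your proof is correct and matches the paper's approach exactly: the paper simply notes that the corollary ``follows directly from Lemma~\ref{burn_x} by applying a union bound,'' which is precisely the argument you spell out, applying each part of Lemma~\ref{burn_x} to the two marginal copies $W_N$ and $Z_N$ and taking a union bound over the two failure events. Your remarks about independence being unnecessary, and about $r=\eta(N)$ satisfying condition~\eqref{cond:eta}, are both accurate and consistent with the statement of Lemma~\ref{burn_x}.
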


\subsection{Intermediate phase}
For this phase, we prove that, after the burn-in period in the first phase, the distance between the two copies will drop to $\sqrt{N}$ with high probability after another $\frac{h}{2J}\log N$ time (plus a constant time) by showing that the coupling is contracting if both $W_N(t)/N, Z_N(t)/N$ are in the good set $\widehat{S}$, provided that $N$ is large enough so that $J/2\lambda>\eta(N)$, where $\eta$ is as defined in \eqref{goodset}. To be precise, recall from \eqref{texit} that $\tau_{\rm{exit}}$ is the first time either copies exits the good set $\widehat{S}$. Also, let $D_N(t):=Z_N(t)-W_N(t)$ be the difference between the two copies of the chain at time $t$. The lemma below states that, for any $\xi>0$, given that $D_N(0)=d_0$ is of order $N^{\frac{1+h}{2}}$, with high probability, on the event $\{\tau_{\rm{exit}}>\frac{h}{2J}\log(N)+\frac{\xi}{2}\}$, $D_N(t)$ drops below $\sqrt{N}$ by the time $\frac{h}{2J}\log(N)+\frac{\xi}{2}$.

For use in the proof, the jump rates of the  coupling $(W_N(t), Z_N(t))$ for $t\leq \tau_{\rm{couple}}$ are given by
\begin{equation}\label{couplingrule}
    \begin{aligned}
        &(w,z)\mapsto (w+1,z)&&\text{ at rate\quad} \mathcal{I}_w,\\
        &(w,z)\mapsto (w,z+1)&&\text{ at rate\quad} \mathcal{I}_z,\\
        &(w,z)\mapsto (w-1,z)&&\text{ at rate\quad} \mathcal{C}_w,\\
        &(w,z)\mapsto (w,z-1)&&\text{ at rate\quad} \mathcal{C}_z,
    \end{aligned}
\end{equation}
where 
\begin{equation*}
    \mathcal{I}_x\equiv \mathcal{I}(x):= \lambda x\left(1-\frac{x}{N}\right)+\varepsilon (N-x) \quad \text{and} \quad \mathcal{C}_x \equiv \mathcal{C}(x):= \mu x, \quad x\in\{0, \ldots, N\}.
\end{equation*}  
Note that before colliding the two copies almost surely do not jump simultaneously, and thus they do not cross without colliding. Since the chains move together after colliding and we assume (without loss of generality) that $W_N(0)\leq Z_N(0)$, the coupling is \textit{monotonic}, meaning $W_N(t)\leq Z_N(t)$ for all $t\geq 0$.

\begin{lemma}\label{med}
For $h\in(0,1)$, let $d_0:=z_0-w_0$. Suppose $w_0/N, z_0/N\in \widehat{S}$, so that $d_0\leq 2\eta N=4(C_2+C_3)N^{\frac{1+h}{2}}$. There exists a positive constant $C_4$ depending only on $\lambda, \mu$ and $\varepsilon$ such that the following relation holds for $N$ sufficiently large and any $\xi>0$,
\begin{equation}\label{hit}    
\mathbb{P}_{w_0,z_0}\left(D_{N}\left(\mbox{$\frac{h}{2J}$}\log(N)+\mbox{$\frac{\xi}{2}$}\right)\geq \sqrt{N}, \, \tau_{\rm{exit}}>\mbox{$\frac{h}{2J}$}\log(N)+\mbox{$\frac{\xi}{2}$}\right)\leq C_4e^{-\mbox{$\frac{J}{2}$}\xi}.
\end{equation}
\end{lemma}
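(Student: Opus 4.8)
The plan is to analyze the process $D_N(t) = Z_N(t) - W_N(t)$ directly while both copies remain in the good set $\widehat S$, and show it contracts at an exponential rate. First I would write down the drift of $D_N$. Using the coupling rates~\eqref{couplingrule}, the process $D_N$ increases by $1$ at rate $\mathcal{I}_z + \mathcal{C}_w$ and decreases by $1$ at rate $\mathcal{I}_w + \mathcal{C}_z$ (before coalescence), so its drift is
\begin{equation*}
    (\mathcal{I}_z - \mathcal{I}_w) - (\mathcal{C}_z - \mathcal{C}_w) = \lambda(z-w)\Bigl(1 - \tfrac{z+w}{N}\Bigr) - \varepsilon(z-w) - \mu(z-w).
\end{equation*}
When $w/N, z/N \in \widehat S = I(\eta)$, we have $(z+w)/N \in [2x^\star - 2\eta, 2x^\star + 2\eta]$, so the coefficient multiplying $(z-w) = D_N$ is $\lambda(1 - (z+w)/N) - \mu - \varepsilon = \lambda - \mu - \varepsilon - \lambda(z+w)/N$, which equals $-J + \lambda(2x^\star - (z+w)/N)$ since $\lambda - \mu - \varepsilon = 2\lambda x^\star - J$. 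For large $N$ this is at most $-J + 2\lambda\eta \le -J/2$ (this is where the hypothesis $J/2\lambda > \eta(N)$ enters, roughly). Hence, writing $\tau := \tau_{\rm exit} \wedge \tau_{\rm couple}$, the stopped process $D_N(t\wedge\tau) e^{(J/2)(t\wedge\tau)}$ is a supermartingale: indeed, Dynkin's formula gives $\frac{d}{dt}\mathbb{E}[D_N(t\wedge\tau)e^{(J/2)(t\wedge\tau)}] \le 0$ on the relevant event. I would make this rigorous via the generator applied to $g(t,d) = d\,e^{(J/2)t}$ and optional stopping.

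From the supermartingale property, $\mathbb{E}_{w_0,z_0}[D_N(t\wedge\tau)] \le d_0 e^{-(J/2)t}$. Taking $t = \frac{h}{2J}\log N + \frac{\xi}{2}$, on the event $\{\tau_{\rm exit} > t\}$ one has $\tau \wedge t = \tau_{\rm couple}\wedge t$, and since $D_N$ vanishes after coalescence, $D_N(t\wedge\tau)\mathbbm{1}_{\tau_{\rm exit} > t} = D_N(t)\mathbbm{1}_{\tau_{\rm exit}>t}$. Therefore
\begin{equation*}
    \mathbb{E}_{w_0,z_0}\bigl[D_N(t)\,\mathbbm{1}_{\tau_{\rm exit} > t}\bigr] \le d_0 e^{-(J/2)t} \le 4(C_2+C_3)N^{\frac{1+h}{2}} \cdot N^{-h/2} e^{-(J/2)(\xi/2)} = 4(C_2+C_3)\sqrt{N}\, e^{-(J/4)\xi}.
\end{equation*}
Since $D_N(t) \ge 0$ on the monotonic coupling, Markov's inequality then bounds the probability in~\eqref{hit} by $4(C_2+C_3)e^{-(J/4)\xi}$. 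This gives the claimed bound with $C_4 := 4(C_2+C_3)$ — modulo the discrepancy between $e^{-(J/4)\xi}$ and $e^{-(J/2)\xi}$ in the statement, which I would reconcile either by tightening the drift bound (the coefficient is actually close to $-J$, not $-J/2$, once $\eta \to 0$, so one can absorb the loss for large $N$) or by adjusting constants; the cleanest route is to note $-J + 2\lambda\eta(N) \to -J$, so for $N$ large the contraction rate exceeds any fixed $J' < J$, in particular exceeds $J$ itself is false, but exceeds $2J/3 > J/2$, and then redo the exponent bookkeeping with rate slightly below $J$ to land on $e^{-(J/2)\xi}$ comfortably.

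The main obstacle is the careful handling of the two stopping times together: $D_N$ is only a supermartingale (after the exponential tilt) while \emph{both} copies stay in $\widehat S$, so I must stop at $\tau_{\rm exit}$, but the event in~\eqref{hit} concerns the \emph{unstopped} $D_N(t)$ restricted to $\{\tau_{\rm exit} > t\}$. The identity $D_N(t\wedge\tau)\mathbbm{1}_{\tau_{\rm exit}>t} = D_N(t)\mathbbm{1}_{\tau_{\rm exit}>t}$ is what makes the argument go through, and it relies on $D_N$ being absorbed at $0$ at $\tau_{\rm couple}$ (so the value is unchanged whether or not we further stop at $\tau_{\rm couple}$) — this should be stated explicitly. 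A secondary technical point is justifying the supermartingale claim rigorously for a continuous-time jump process with bounded rates: since all jump rates are $O(N)$ and jumps are of size $1$, integrability is immediate and Dynkin's formula applies without fuss, but I would spell out that $\mathcal{A}g(t, D_N(t)) \le 0$ on $\{t < \tau_{\rm exit}\wedge\tau_{\rm couple}\}$ where $\mathcal{A}$ is the space-time generator.
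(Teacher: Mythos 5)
Your conceptual route — compute the drift of $D_N$ (correct), observe it is $\le -(J-2\lambda\eta)D_N$ while both copies stay in $\widehat S$ (correct), stop at $\tau_{\rm exit}\wedge\tau_{\rm couple}$, and apply Markov's inequality — is essentially the paper's argument. The paper integrates the drift bound and invokes Gr\"onwall to get $d(t):=\mathbb{E}_{w_0,z_0}[D_N(t)\mathbbm{1}_{t<\tau_{\rm exit}}]\le d_0 e^{-(J-2\lambda\eta)t}$, whereas you propose the equivalent exponential supermartingale; that packaging difference is harmless, and your handling of the absorption at $\tau_{\rm couple}$ is exactly right.

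However, there is a genuine gap in the exponent bookkeeping that undermines the conclusion as you have written it. You choose the contraction rate $J/2$ and then compute $e^{-(J/2)t}$ with $t=\frac{h}{2J}\log N+\frac{\xi}{2}$. The $N$-part of that exponent is $\exp\bigl(-\frac{J}{2}\cdot\frac{h}{2J}\log N\bigr)=N^{-h/4}$, not $N^{-h/2}$ as you wrote. Carrying the correct power through, $d_0 e^{-(J/2)t}\le 4(C_2+C_3)N^{(1+h)/2}\cdot N^{-h/4}e^{-J\xi/4}=4(C_2+C_3)N^{(2+h)/4}e^{-J\xi/4}$, and after dividing by $\sqrt N$ in Markov's inequality you are left with $4(C_2+C_3)N^{h/4}e^{-J\xi/4}$, which diverges as $N\to\infty$ for any $h>0$. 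So the rate $J/2$ does not merely cost a factor in the $\xi$-exponent — it breaks the $N$-scaling entirely, and your arithmetic slip masked this. The fix is not optional: you must run the estimate with the rate $J-2\lambda\eta(N)$ (or any sequence of rates tending to $J$, not to any number strictly below $J$). With that rate, $e^{-(J-2\lambda\eta)t}=N^{-h/2}N^{\lambda h\eta/J}e^{-(J-2\lambda\eta)\xi/2}$, the $N^{(1+h)/2}\cdot N^{-h/2}=\sqrt N$ cancellation works, and the stray factor $N^{\lambda h\eta/J}\to 1$ because $\eta=O(N^{-(1-h)/2})$. You gesture at this at the very end (``the coefficient is actually close to $-J$''), but you frame it as a small adjustment to the $\xi$-exponent rather than as the load-bearing requirement that makes the $N$-exponent come out to exactly $1/2$. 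Once you rerun the computation with rate $J-2\lambda\eta$ and the correct arithmetic, you recover the paper's bound and the argument closes.
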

\begin{proof}
Recall the possible transitions for this phase defined in \eqref{couplingrule}. Since the coupling is monotonic,
$D_N(t)$ is non-negative for all $t\geq0$. Also, $\tau_{\text{couple}}=\inf\{t\geq0:D_N(t)=0\}$. Note that the process $D_N(t)$ is not a Markov process by itself, but $D_N(t)$ is uniquely determined by the two-dimensional process $(W_N(t), Z_N(t))$, which is Markov with respect to its natural filtration. 

For each $N$, Proposition 2.1 in \cite{kurtz1971limit} implies the process
\begin{equation}\label{2dcoupling}
    \left(W_N(t), Z_N(t)\right)^{\top}-(W_N(0), Z_N(0))^{\top}-\int_0^t F\left((W_N(s), Z_N(s))^{\top}\right)\,ds, \quad t\geq 0,
\end{equation}
is a zero-mean martingale, where 
\begin{align*}
     F((w, z)^{\top})& := 
     \begin{cases}
     (0,1)^{\top} \mathcal{I}_z+ (1,0)^{\top}\mathcal{I}_w+ (0,-1)^{\top}\mathcal{C}_z + (-1,0)^{\top}\mathcal{C}_w, & w\not=z \\
     (1,1)^{\top} \mathcal{I}_w +(-1,-1)^{\top}\mathcal{C}_w, & w = z.
     \end{cases}
\end{align*}
Taking expectations in \eqref{2dcoupling} and subtracting the coordinates, we obtain 
\begin{align*}
    \mathbb{E}_{w_0, z_0}D_N(t)=(z_0-w_0) &+ \mathbb{E}_{w_0, z_0} \int_0^t \left(\mathcal{I}(Z_N(s))-\mathcal{I}(W_N(s))\right) \,ds\\ 
    &- \mathbb{E}_{w_0, z_0} \int_0^t (\mathcal{C}(Z_N(s))-\mathcal{C}(W_N(s))) \,ds.  
\end{align*}
Note that, for all $t\leq \tau_{\rm{exit}}$, one has $\left(x^{\star}-\eta\right)N\leq W_N(t)\leq Z_N(t)$. Moreover, for any states $w,z \in\widehat{S}$, we have
\begin{align*}
    \mathcal{I}_z-\mathcal{I}_w
    &=(\lambda-\varepsilon)(z-w)-\frac{\lambda}{N}(z-w)(z+w)\nonumber\\
    &\leq (\lambda-\varepsilon)(z-w)-\frac{2\lambda}{N}(z-w)\left(x^{\star}-\eta\right)N\nonumber\\
    &=\left(\mu-J+2\lambda\eta\right)(z-w).
\end{align*}
Choose $N$ large enough so that $J/2\lambda\geq \eta(N)$.
Applying the optional stopping theorem to the Dynkin martingale for $D_N(t)$ (which is a function of the two-dimensional Markov chain $(W_N(t),Z_N(t))$) and bounded stopping time $t \wedge \tau_{\rm{exit}}$, as well as Fubini's theorem, and the fact that $D_N(t)$ is non-negative, we obtain
\begin{align*}
    \mathbb{E}_{w_0, z_0}\left(D_N(t)\mathbbm{1}_{t< \tau_{\rm{exit}}}\right) &\leq \mathbb{E}_{w_0, z_0}D_N(t\wedge \tau_{\rm{exit}})\\
    &\leq(z_0-w_0) - (J-2\lambda\eta) \mathbb{E}_{w_0, z_0} \left(\int_0^{t \wedge \tau_{\rm{exit}}} D_N(s) \,ds\right)\\
    &=(z_0-w_0) - (J-2\lambda\eta) \mathbb{E}_{w_0, z_0} \left(\int_0^{t} D_N(s)\mathbbm{1}_{s<\tau_{\rm{exit}}} \,ds\right)\\
    &=(z_0-w_0) - (J-2\lambda\eta)\int_0^{t} \mathbb{E}_{w_0, z_0}\left(D_N(s)\mathbbm{1}_{s<\tau_{\rm{exit}}} \right)\,ds.
\end{align*}
Letting $d(t):=\mathbb{E}_{w_0, z_0}\left(D_N(t)\mathbbm{1}_{t< \tau_{\rm{exit}}}\right)$, we see that $d(t)$ satisfies the following integral inequality: 
\begin{equation*}
    d(t)\leq d_0 - (J-2\lambda\eta) \int_0^t d(s) \,ds. 
\end{equation*}
Then, for all $N$ large enough so that $J/2\lambda\geq \eta(N)$, by adapting Bellman's proof of the Gr\"{o}nwall's inequality (see Theorem 1.2.2 in \cite{ames1997inequalities}) to this setting, we obtain that for all $t\geq0$,
\begin{equation*}
    d(t)\leq d_0e^{-(J-2\lambda\eta)t}.
\end{equation*}
Since we have assumed that $d_0\leq 4(C_2+C_3)N^{\frac{1+h}{2}}$, we have
 \begin{align*}
    \mathbb{E}_{w_0, z_0}\left(D_N\left(\mbox{$\frac{h}{2J}$}\log(N)+\mbox{$\frac{\xi}{2}$}\right)
    \mathbbm{1}_{\tau_{\rm{exit}}>\mbox{$\frac{h}{2J}$}\log(N)+\mbox{$\frac{\xi}{2}$}}\right)&\leq d_0e^{-(J-2\lambda\eta) \left(\mbox{$\frac{h}{2J}$}\log(N)+\mbox{$\frac{\xi}{2}$}\right)}\\
    &\leq 4(C_2+C_3)N^{\frac{1}{2}}N^{\frac{\lambda h \eta}{J}}e^{-\mbox{$\frac{J-2\lambda\eta}{2}$}\xi}.
\end{align*}
Now, using the fact that $
\mathbbm{1}_{D_{N}\left(t\right)\geq \sqrt{N}}\leq D_{N}(t)/\sqrt{N}$ and the previous display,
we have
\begin{align*}
    \mathbb{P}_{w_0,z_0}&\left(D_{N}\left(\mbox{$\frac{h}{2J}$}\log(N)+\mbox{$\frac{\xi}{2}$}\right)\geq \sqrt{N}, \, \tau_{\rm{exit}}>\mbox{$\frac{h}{2J}$}\log(N)+\mbox{$\frac{\xi}{2}$}\right) \\
    	 &\leq N^{-1/2}
    \mathbb{E}_{w_0, z_0}\left(D_N\left(\mbox{$\frac{h}{2J}$}\log(N)+\mbox{$\frac{\xi}{2}$}\right)
    \mathbbm{1}_{\tau_{\rm{exit}}>\mbox{$\frac{h}{2J}$}\log(N)+\mbox{$\frac{\xi}{2}$}}\right)\\
    &\leq 4(C_2+C_3)N^{\frac{\lambda h \eta}{J}}e^{-\mbox{$\frac{J-2\lambda\eta}{2}$}\xi}.
\end{align*}
The proof is complete after noting that, as $\eta(N)=O(N^{-\frac{1-h}{2}})$, we have $\lim_{N\to\infty} N^{\frac{\lambda h}{J}\eta}=1$.
\end{proof}
\subsection{Final phase}\label{sec:finphase}
We now show that, once the two copies are within distance $\sqrt{N}$ away from each other, the additional time to coalescence is at most $\xi/2>0$ (which is an absolute constant), with high probability, on the event that the copies stay in the good set $\widehat{S}$ for a time that is at least $\xi/2$. 
\begin{lemma}\label{final}
Let $\tau_0:=\inf\{t\geq0, D_N(t)=0\}$. Suppose $w_0/N, z_0/N\in\widehat{S}$ and $d_0=z_0-w_0 \le\sqrt{N}$, then for all $N$ sufficiently large such that $J/2\lambda \ge \eta(N)$, the following relation holds for all $\xi>0$,
\begin{equation*}
    \mathbb{P}_{w_0, z_0}\left(\tau_0>\mbox{$\frac{\xi}{2}$},\tau_{\rm{exit}}>\mbox{$\frac{\xi}{2}$}\right)\leq\frac{4}{\sqrt{\mu\wedge\varepsilon}}\xi^{-\frac{1}{2}}.
\end{equation*}
\end{lemma}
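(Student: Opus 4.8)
The plan is to reduce the coalescence of the two copies to the time it takes a simple symmetric random walk, run at rate of order $N$, to reach $0$ from a state of order $\sqrt N$. Recall that the monotone coupling keeps $D_N(t)=Z_N(t)-W_N(t)\ge 0$, that $\tau_0=\tau_{\rm{couple}}=\inf\{t:D_N(t)=0\}$, and that by \eqref{couplingrule} the process $D_N$ goes down by $1$ at rate $\mathcal{I}_{W_N}+\mathcal{C}_{Z_N}$ and up by $1$ at rate $\mathcal{I}_{Z_N}+\mathcal{C}_{W_N}$. The two facts I would isolate first are: (a) the \emph{down-rate is of order $N$ at every state with $W_N<Z_N$}, which holds without reference to the good set, since $\mathcal{I}_{W_N}\ge\varepsilon(N-W_N)$ and $\mathcal{C}_{Z_N}=\mu Z_N\ge\mu W_N$, and minimising $w\mapsto\varepsilon(N-w)+\mu w$ over $[0,N]$ gives
\begin{equation*}
    \mathcal{I}_{W_N}+\mathcal{C}_{Z_N}\ \ge\ (\mu\wedge\varepsilon)N ;
\end{equation*}
and (b) the \emph{up-rate never exceeds the down-rate while both copies lie in $\widehat{S}$} (and $N$ is large enough that $J/(2\lambda)\ge\eta(N)$) --- this is precisely the drift bound $\mathcal{I}_{Z_N}-\mathcal{I}_{W_N}-(\mathcal{C}_{Z_N}-\mathcal{C}_{W_N})\le-(J-2\lambda\eta)D_N\le 0$ already obtained inside the proof of Lemma~\ref{med}.

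Given (a) and (b), I would pass to the embedded jump chain $(\widehat{D}_n)_{n\ge 0}$ of $D_N$, started at $d_0$ and absorbed at $0$. By (b), at every state visited while the copies stay in $\widehat{S}$ this chain steps down with probability at least $\tfrac12$, so it can be coupled with an unabsorbed simple symmetric random walk $(\widehat{R}_n)_{n\ge 0}$ on $\mathbb{Z}$, with $\widehat{R}_0=d_0$, in such a way that $\widehat{D}_n\le\widehat{R}_n$ for every $n$ up to the step at which the good set is left. Since $\widehat{D}$ is nonnegative and absorbed at $0$, it follows that on the event $\{\tau_0>\tfrac\xi2,\ \tau_{\rm{exit}}>\tfrac\xi2\}$ the walk $\widehat{R}$ stays strictly positive throughout its first $J_N$ steps, where $J_N$ is the number of jumps $D_N$ makes in $[0,\tfrac\xi2]$. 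By (a), on $\{\tau_0>\tfrac\xi2\}$ the down-jumps of $D_N$ in $[0,\tfrac\xi2]$ contain (after a routine thinning) a rate-$(\mu\wedge\varepsilon)N$ Poisson process, so $J_N\ge(\mu\wedge\varepsilon)N\tfrac\xi4$ off an event of probability $e^{-\Omega(N)}$. Finally, the reflection principle gives, for the simple symmetric walk from $m$, $\mathbb{P}_m(\text{avoid }0\text{ for }n\text{ steps})=\mathbb{P}_0(-m<S_n\le m)\le 2m/\sqrt n$; plugging in $m=d_0\le\sqrt N$ and $n=(\mu\wedge\varepsilon)N\tfrac\xi4$ yields $\tfrac{4}{\sqrt{\mu\wedge\varepsilon}}\,\xi^{-1/2}$, with the $e^{-\Omega(N)}$ correction absorbed into the slack in this non-optimised constant.

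The step that needs the most care is the random-walk comparison itself. Because the jump rate of $D_N$ is of order $N$ but genuinely state-dependent --- on $\widehat{S}$ the up-rate is itself of order $N$, merely dominated by the down-rate --- one cannot dominate $D_N$ by a \emph{constant}-rate symmetric walk; passing through the embedded chain, where the down-bias is the clean statement ``probability at least $\tfrac12$'', is what makes the domination legitimate, at the price of having to convert the step count $J_N$ back to real time via (a) and to propagate the conditioning events $\{\tau_0>\tfrac\xi2\}$ and $\{\tau_{\rm{exit}}>\tfrac\xi2\}$ through every coupling. It is also worth recording why no direct first- or second-moment estimate of $D_N(\tfrac\xi2)$ can work: $D_N$ does have non-positive drift on $\widehat{S}$, but a priori it could wander over the full width $O(N^{(1+h)/2})$ of $\widehat{S}$, which is far larger than $\sqrt N$, so such an estimate would be off by a power of $N$ --- it is the diffusive fluctuations at rate $\asymp N$, not the drift, that push $D_N$ to $0$ in constant time, and only the random-walk comparison registers this.
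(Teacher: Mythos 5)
Your proof is correct, and it takes a genuinely different route from the paper's. Both arguments rest on the same two facts: a uniform lower bound $\mathcal{I}_{W_N}+\mathcal{C}_{Z_N}\geq(\mu\wedge\varepsilon)N$ on the down-rate of $D_N$, and non-positive drift while both copies are in $\widehat{S}$. The paper exploits these by constructing a reflected version $(\overline{W}_N,\overline{Z}_N)$ of the coupled chain on the good set (so that the drift condition holds \emph{globally} on its state space), verifying the hypotheses of Proposition~\ref{RWBound} for $f(w,z)=(z-w)\mathbbm{1}_{w\le z}$ with $B=1$ and $\sigma^2=(\mu\wedge\varepsilon)N$, applying that black-box hitting-time bound, and transferring back via path-wise indistinguishability up to $\tau_{\rm{exit}}$. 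You instead pass to the embedded jump chain of $D_N$, stochastically dominate it by a simple symmetric walk (legitimate because the down-step probability is $\geq\frac12$ whenever the copies are in $\widehat{S}$), convert real time to step count via Poisson thinning of the down-jumps using the rate bound, and close with the exact reflection-principle identity $\mathbb{P}_m(\text{avoid }0\text{ for }n\text{ steps})=\mathbb{P}_0(-m<S_n\le m)\le 2m/\sqrt{n}$. The arithmetic is right and the $e^{-\Omega(N)}$ thinning error really is absorbed by the slack in that non-optimised $2m/\sqrt{n}$ bound. What each buys: your route is fully elementary and avoids the reflected-chain device, and it makes the ``unbiased walk at rate $N$'' heuristic from the introduction literal; the cost is that you must jointly construct the PRM driving the jumps, the uniforms driving the coupled walk $\widehat{R}$, and the thinning, and carefully propagate the events $\{\tau_0>\xi/2\}$ and $\{\tau_{\rm{exit}}>\xi/2\}$ through all of it --- routine, but it should be spelled out in a full write-up. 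The paper's route keeps the martingale machinery inside the cited proposition and works directly in continuous time (no step/time conversion and no Poisson concentration), at the price of introducing the reflected chain. Your closing remark --- that a moment bound on $D_N(\xi/2)$ cannot give $\sqrt{N}$-scale localisation because $\widehat{S}$ has width $O(N^{(1+h)/2})$, and only the diffusive fluctuations at rate $\asymp N$ deliver the constant-time coalescence --- is exactly the right explanation for why a hitting-time argument, and not a drift estimate, is needed here.
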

To prove this lemma, we require \cite[Proposition 4.1]{barbour2022long} (stated below as Proposition~\ref{RWBound}), which is a continuous time analogue of \cite[Proposition 17.19]{levin2017markov}, and controls the tails of certain hitting times.
With the right setup, the proof of Lemma~\ref{final} is a straightforward application of the lemma.
\begin{proposition}\cite[Proposition 4.1]{barbour2022long}\label{RWBound}
Let $X(t)$ be a continuous-time Markov jump chain with state space $S$ and rate matrix $Q$, which is stable, conservative and non-explosive. Let $B$ and~$\sigma^2$ be positive, and let $f: S \to \mathbb{R}_+$ be a function. Set $S_0 := \{x\in S : f (x) = 0\}$, and assume that
\begin{enumerate}
    \item the drift $\sum_y Q(x, y)(f(y)-f(x))$ of $f$ is non-positive for all $x \in S \backslash S_0$;
    \item $f(X)$ makes jumps of magnitude at most $B$;
    \item $\sum_y Q(x, y)(f(y)-f(x))^2\geq \sigma^2$ for all $x \in S \backslash S_0$.
\end{enumerate}
Define $T_{*}:=\inf\{t:f(X(t))=0\}$ the hitting time of $S_0$. Then, for any $t_0\geq 2B^2/\sigma^2$, 
\begin{equation*}
    \mathbb{P}(T_{*}\geq t_0)\leq \frac{2\sqrt{2}f(X_0)}{\sigma\sqrt{t_0}}.
\end{equation*}
\end{proposition}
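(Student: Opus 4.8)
The plan is to establish this as the continuous-time counterpart of \cite[Proposition~17.19]{levin2017markov}, via a martingale argument combined with a level truncation and an optimisation over the truncation level. Throughout write $\mathcal{L}g(x):=\sum_y Q(x,y)(g(y)-g(x))$ for the generator acting on a function $g$, and $q_2(x):=\sum_y Q(x,y)(f(y)-f(x))^2$, so that hypothesis~(iii) reads $q_2\ge\sigma^2$ on $S\setminus S_0$. By Dynkin's formula (the stability, conservativeness and non-explosiveness of $Q$ providing the localising sequences needed to make this rigorous), the processes $f(X(t))-\int_0^t\mathcal{L}f(X(s))\,ds$ and $f(X(t))^2-\int_0^t(2f\mathcal{L}f+q_2)(X(s))\,ds$ are local martingales, using the identity $\mathcal{L}(f^2)=2f\mathcal{L}f+q_2$. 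Hypothesis~(i) together with $f\ge0$ gives that $t\mapsto f(X(t\wedge T_*))$ is a non-negative supermartingale, and that $f(X(t\wedge T_*))^2-\int_0^{t\wedge T_*}q_2(X(s))\,ds$ is a supermartingale (its compensator has density $2f\mathcal{L}f\le0$ on $S\setminus S_0$, where the stopped process lives until time $T_*$). Since the claimed bound is vacuous when $2\sqrt2\,f(X_0)\ge\sigma\sqrt{t_0}$, we may and do assume $f(X_0)<\sigma\sqrt{t_0}$.

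Fix a level $L>0$, to be chosen at the end, let $T_L:=\inf\{t:f(X(t))\ge L\}$ and put $\tau':=T_*\wedge T_L$. Decompose $\{T_*\ge t_0\}\subseteq\{T_L<T_*\}\cup\{\tau'\ge t_0\}$. For the first event, optional stopping applied to the non-negative supermartingale $f(X(\cdot\wedge T_*))$ at $T_L$ (approached by $T_L\wedge n$, letting $n\to\infty$) gives $\mathbb{P}(T_L<T_*)\le f(X_0)/L$, because $f(X(T_L))\ge L$ there. For the second event, note that on the stopped path $0\le f(X(t\wedge\tau'))\le L+B$ for all $t$, by hypothesis~(ii). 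From the martingale identity for $f(X)^2$ evaluated at the bounded stopping time $t_0\wedge\tau'$ (after suitable localisation), together with $q_2\ge\sigma^2$ along $[0,\tau')$ and $\mathcal{L}f\le0$, one obtains $\mathbb{E}\big[f(X(t_0\wedge\tau'))^2\big]\ge f(X_0)^2+\sigma^2\,\mathbb{E}[t_0\wedge\tau']-\mathbb{E}\big[\int_0^{t_0\wedge\tau'}2f(-\mathcal{L}f)(X(s))\,ds\big]$, while $f(X(t_0\wedge\tau'))\le L+B$ and the supermartingale property of $f(X(\cdot\wedge\tau'))$ give $\mathbb{E}\big[f(X(t_0\wedge\tau'))^2\big]\le (L+B)\,\mathbb{E}\big[f(X(t_0\wedge\tau'))\big]\le(L+B)f(X_0)$. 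The correction term is controlled using $0\le f\le L+B$ and the identity $\int_0^{t_0\wedge\tau'}(-\mathcal{L}f)(X(s))\,ds=f(X_0)-f(X(t_0\wedge\tau'))+(\text{mean-zero martingale})$, whence $\mathbb{E}\big[\int_0^{t_0\wedge\tau'}2f(-\mathcal{L}f)\big]\le 2(L+B)f(X_0)$; combining the three displays yields $\sigma^2\,\mathbb{E}[t_0\wedge\tau']\le C(L+B)f(X_0)$ for an explicit constant $C$ (with $C=1$ in the pure martingale case $\mathcal{L}f\equiv0$), and hence $\mathbb{P}(\tau'\ge t_0)\le\mathbb{E}[t_0\wedge\tau']/t_0\le C(L+B)f(X_0)/(\sigma^2t_0)$.

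Adding the two contributions gives $\mathbb{P}(T_*\ge t_0)\le f(X_0)/L+C(L+B)f(X_0)/(\sigma^2t_0)$; the hypothesis $t_0\ge 2B^2/\sigma^2$ guarantees that the optimal choice of $L$ (of order $\sigma\sqrt{t_0}$) satisfies $L\ge B$, so that $L+B\le 2L$, and minimising $1/L+2CL/(\sigma^2t_0)$ over $L$ then pins down $L$ and reproduces the constant $2\sqrt{2}$ in the martingale case (the threshold $2B^2/\sigma^2$ being exactly what forces $L\ge B$ at the optimal level). I expect the main obstacle to be twofold. First, making the continuous-time martingale bookkeeping rigorous: applying Dynkin's formula to $f(X)$ and $f(X)^2$, producing the localising sequences from the stable/conservative/non-explosive hypotheses, and pushing optional stopping through the unbounded stopping times $T_*,T_L$ via $\wedge n$ together with monotone convergence (for the increasing process $\int(-\mathcal{L}f)$) and uniform-integrability arguments. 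Second, the correction term $\int 2f(-\mathcal{L}f)$, which has no analogue in the discrete random-walk statement and is the genuinely new feature of the supermartingale (as opposed to martingale) case; here $f\ge0$ is essential, keeping the compensator $2f\mathcal{L}f$ non-positive. The non-lossy step in the whole argument — and the reason the final bound is linear rather than sublinear in $f(X_0)$ — is the inequality $\mathbb{E}[f(X(t_0\wedge\tau'))^2]\le(L+B)\,\mathbb{E}[f(X(t_0\wedge\tau'))]$, bounding the second moment by $(L+B)$ times the first moment rather than by $(L+B)^2$.
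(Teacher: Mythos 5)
A remark on context first: the paper does not prove this proposition---it is quoted directly from \cite{barbour2022long}---so the comparison is with the standard argument behind that result (the continuous-time analogue of the Levin--Peres hitting-time bound). Your two-level scheme---optional stopping of the non-negative supermartingale $f(X(\cdot\wedge T_*))$ to get $\mathbb{P}(T_L<T_*)\le f(X_0)/L$, a second-moment estimate for $\mathbb{E}[t_0\wedge\tau']$ via the generator identity $\mathcal{L}(f^2)=2f\mathcal{L}f+q_2$, Markov's inequality, and optimisation over the level $L$---is exactly the right architecture, and the martingale bookkeeping and localisation issues you flag are genuine but routine.

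However, as written the proposal does not deliver the stated conclusion, namely the constant $2\sqrt2$ together with the threshold $t_0\ge 2B^2/\sigma^2$. Your bound on the drift correction, $\mathbb{E}\int 2f(-\mathcal{L}f)\le 2(L+B)f(X_0)$, yields $\sigma^2\mathbb{E}[t_0\wedge\tau']\le 3(L+B)f(X_0)$, i.e.\ $C=3$, and minimising $1/L+2CL/(\sigma^2t_0)$ then gives $2\sqrt6\,f(X_0)/(\sigma\sqrt{t_0})$, not $2\sqrt2$; moreover the optimal $L=\sigma\sqrt{t_0}/\sqrt6$ need not exceed $B$ under $t_0\ge 2B^2/\sigma^2$, so the step $L+B\le 2L$ is not justified at the optimum. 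You concede as much by claiming $2\sqrt2$ only ``in the martingale case''. The gap is reparable entirely inside your own scheme: on $[0,\tau')$ one has $f<L$ (not merely $\le L+B$), so $2f\mathcal{L}f\ge 2L\mathcal{L}f$ there, and if you keep the exact Dynkin identity $\mathbb{E}\int_0^{t_0\wedge\tau'}(-\mathcal{L}f)(X(s))\,ds=f(X_0)-\mathbb{E}[f(X(t_0\wedge\tau'))]$ instead of the lossy bound by $f(X_0)$, combining your three displays gives
\begin{equation*}
\sigma^2\,\mathbb{E}[t_0\wedge\tau'] \;\le\; (B-L)\,\mathbb{E}\bigl[f(X(t_0\wedge\tau'))\bigr]+2Lf(X_0)-f(X_0)^2 \;\le\; 2Lf(X_0) \qquad\text{whenever } L\ge B,
\end{equation*}
since $f\ge0$. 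Then $\mathbb{P}(T_*\ge t_0)\le f(X_0)\bigl(1/L+2L/(\sigma^2t_0)\bigr)$, minimised at $L=\sigma\sqrt{t_0/2}$, which gives exactly $2\sqrt2\,f(X_0)/(\sigma\sqrt{t_0})$; and the hypothesis $t_0\ge 2B^2/\sigma^2$ is precisely what guarantees this optimal $L$ satisfies $L\ge B$, which is the role you correctly guessed for the threshold. With that adjustment (and the localisation details you already mention), your proof closes and matches the intended argument.
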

The drift condition of the proposition does not hold globally for $X_N$, but it does hold in the good set $\widehat{S}$. Since we are only interested in the
behaviour of the chain before it leaves the good set,  conceptually the lemma still applies. To apply it directly,
we introduced a modified version of the chain that  reflects at the ``boundary'' of the good set. Let $\ell:=\left\{\left\lceil (x^{\star}-\eta)N \right\rceil-1\right\}$ and $u:=\left\{\left\lfloor (x^{\star}+\eta)N \right\rfloor+1\right\}$, be the states corresponding to leaving the good set $\widehat S$. Define a state space $S:=N\widehat{S}\cup u \cup\ell$. Let $\overline{X}_N(t)$ be a Markov chain on $S$, which evolves as follows. If $\overline{X}_N(t)/N\in\widehat{S}$,
then it makes jumps according to the transition rates of the original process $X_N(t)$. Otherwise, we have 
\begin{equation}\label{2211}
\begin{split}
\openup\jot 
\begin{aligned}[t]
    & u \to u-1 \text{\quad at rate \quad} \mathcal{C}_{u},\\
    & u \to u+1 \text{\quad at rate \quad} 0,
\end{aligned}
\qquad\qquad 
\begin{aligned}[t]
    & \ell \to \ell-1 \text{\quad at rate \quad} 0,\\
    & \ell \to \ell+1 \text{\quad at rate \quad} \mathcal{I}_{\ell}.
\end{aligned}
\end{split}
\end{equation}
The processes $X_N(t)$ and $\overline{X}_N(t)$ admit a natural coupling, under which the two processes start from the same initial state $x_0/N\in\widehat{S}$ and evolve together until time $\tau^{X}_{\rm{exit}}=\inf\{t\geq 0: X_N(t)/N\notin\widehat{S}\}$ defined in \eqref{xexit}. This leads to the following immediate observation: for any event $A$ in the stopped sigma-algebra of $\tau_{\rm{exit}}^X$, one has $\mathbb{P}(A)=\overline{\mathbb{P}}(A)$, where we use $\mathbb{P}$ to denote the underlying probability measure of $X_N(t)$ and $\overline{\mathbb{P}}$ to denote the underlying probability measure of $\overline{X}_N(t)$. That is, the two processes $X_N(t)$ and $\overline{X}_N(t)$ are path-wise indistinguishable in probability up to time $\tau^{X}_{\rm{exit}}$. 

As before, let $(W_N(t), Z_N(t))$ be two independent copies of $X_N(t)$ with transitions specified by \eqref{couplingrule} before coalescence, and moving together after coalescence. Also, let $(\overline{W}_N(t), \overline{Z}_N(t))$ evolve as two independent copies of $\overline{X}_N(t)$ with $\overline{W}_N(0)\leq\overline{Z}_N(0)$ until coalescence, and moving together after coalescence. Analogously to the relationship between $X_N(t)$ and $\overline{X}_N(t)$, the two-dimensional processes $(W_N(t), Z_N(t))$ and $(\overline{W}_N(t), \overline{Z}_N(t))$ are path-wise indistinguishable in probability up to the first time $(W_N(t), Z_N(t))$ exits $N\widehat{S}\times N\widehat{S}$, that is until  the time $\tau_{\rm{exit}}$ defined in \eqref{texit}.    
\begin{proof}[Proof of Lemma~\ref{final}]
Set
\begin{align*}
    \tau^{\overline{W}, \overline{Z}}_{0}&:=\inf\{t\geq 0: \overline{Z}_N(t)-\overline{W}_N(t)=0\}, \\
     \tau^{\overline{W}, \overline{Z}}_{\rm{exit}}&:=\inf\{t\geq 0: \overline{W}_N(t)=\ell \text{ or } \overline{Z}_N(t)=u\},
\end{align*}
and write $\mathbb{P}$ for the underlying probability measure of $(W_N(t), Z_N(t))$ and $\overline{\mathbb{P}}$ for the underlying probability measure of $(\overline{W}_N(t), \overline{Z}_N(t))$. Since  $\overline{W}_N(t)\leq \overline{Z}_N(t)$ for all $t\geq0$, the process $(\overline{W}_N(t), \overline{Z}_N(t))$ leaves $N\widehat{S}\times N\widehat{S}$ either by having the lower one jump down or by having the upper one jump up. Then, as in the  previous discussion of the relationship between $X_N(t)$ and $\overline{X}_N(t)$, we have the following two immediate observations: 
\begin{align}
    \mathbb{P}_{w_0, z_0}\left(\tau_{\rm{exit}}\leq \mbox{$\frac{\xi}{2}$}\right)
    	&=\overline{\mathbb{P}}_{w_0, z_0}\left(\tau^{\overline{W}, \overline{Z}}_{\rm{exit}}\leq \mbox{$\frac{\xi}{2}$}\right); \label{ob1}\\
    \mathbb{P}_{w_0, z_0}\left(\tau_0\leq\mbox{$\frac{\xi}{2}$},\,\tau_{\rm{exit}}> \mbox{$\frac{\xi}{2}$}\right)
    	&=\overline{\mathbb{P}}_{w_0, z_0}\left(\tau^{\overline{W}, \overline{Z}}_{0}\leq\mbox{$\frac{\xi}{2}$},\, \tau^{\overline{W}, \overline{Z}}_{\rm{exit}}> \mbox{$\frac{\xi}{2}$}\right). \label{ob2}
\end{align}

Therefore, it suffices to prove the result for the process $(\overline{W}_N(t), \overline{Z}_N(t))$. 
To  apply \cite[Proposition 4.1]{barbour2022long} (Proposition \ref{RWBound}) to $(\overline{W}_N(t), \overline{Z}_N(t))$, first set
\begin{equation*}
 f((w,z))=(z-w)\mathbbm{1}_{w\leq z}.
\end{equation*}
Clearly, $f$ is non-negative and integer-valued, so $f(\textbf{x})\geq 1$ for all $\textbf{x}=(w,z)$ such that $f(\textbf{x})\neq 0$. 
Also, Condition (2) of Proposition~\ref{RWBound}
 is satisfied by taking $B=1$. 

We now verify Condition (1) of Propositions~\ref{RWBound}, i.e. the drift, $\sum_{\textbf{y}} Q(\textbf{x}, \textbf{y})(f(\textbf{y})-f(\textbf{x}))$ is non-positive for all $\textbf{x}$ such that $f(\textbf{x})\neq 0$. First note that, in our case, there are four terms in the sum since, before coalescence, the coupled process $(\overline{W}_N(t), \overline{Z}_N(t))$ has four possible ways of jumping. The movements 
\begin{align*}
    &\textbf{x}=(\bar{w},\bar{z})\mapsto \textbf{y}=(\bar{w}+1,\bar{z})\text{\quad with \quad} Q(\textbf{x}, \textbf{y})=\mathcal{I}_{\bar{w}},\\
    &\textbf{x}=(\bar{w},\bar{z})\mapsto \textbf{y}=(\bar{w},\bar{z}-1)\text{\quad with \quad} Q(\textbf{x}, \textbf{y})=\mathcal{C}_{\bar{z}},
\end{align*}
are always allowed and lead to $f(\textbf{y}) - f(\textbf{x}) = -1$. The other two possible jumps, 
\begin{align*}
    &\textbf{x}=(\bar{w},\bar{z})\mapsto \textbf{y}=(\bar{w},\bar{z}+1)\text{\quad with \quad} Q(\textbf{x}, \textbf{y})=\mathcal{I}_{\bar{z}}\cdot\mathbbm{1}\{\bar{z}\neq u\},\\
    &\textbf{x}=(\bar{w},\bar{z})\mapsto \textbf{y}=(\bar{w}-1,\bar{z})\text{\quad with \quad} Q(\textbf{x}, \textbf{y})=\mathcal{C}_{\bar{w}}\cdot\mathbbm{1}\{\bar{w}\neq \ell\},
\end{align*}
always leads to $f(\textbf{y}) - f(\textbf{x}) = 1$ and the boundary conditions are reflected by the indicators in the transition rates. Also, note that for all $x\in S$,
    \begin{equation*}
        \lvert x/N-x^{\star} \rvert \leq \eta+N^{-1}=O(N^{-\frac{1-h}{2}}).
    \end{equation*}
with $\eta\equiv\eta(N)$ defined in \eqref{goodset}. It then follows that
\begin{align}\label{contracting_coupling}
    \smash{\sum_{\textbf{y}}} Q(\textbf{x}, \textbf{y})(f(\textbf{y})-f(\textbf{x}))
    &= (\mathcal{I}_{\bar{z}}\cdot\mathbbm{1}\{\bar{z}\neq u\}+\mathcal{C}_{\bar{w}}\cdot\mathbbm{1}\{\bar{w}\neq \ell\})-(\mathcal{I}_{\bar{w}}+\mathcal{C}_{\bar{z}})\nonumber\\
    &\leq(\mathcal{I}_{\bar{z}}-\mathcal{I}_{\bar{w}})-(\mathcal{C}_{\bar{z}}-\mathcal{C}_{\bar{w}})\nonumber\\
    & =(\lambda-\mu-\varepsilon) (\bar z-\bar w) - \mbox{$\frac{\lambda}{N}$}(\bar z-\bar w)(\bar z+\bar w)\nonumber\\
    &\leq \left((\lambda-\mu-\varepsilon) - 2\lambda\left(x^{\star}-\eta-N^{-1}\right)\right)(\bar z-\bar w) \nonumber\\
    & = -(J-2\lambda\eta-2\lambda N^{-1})\lvert \bar z- \bar w\rvert,
\end{align}

which is clearly negative, when $N$ is sufficiently large, for $\bar z\not= \bar w$. To get the $\sigma^2$ satisfying Condition (3), note that  
\begin{align*}
    \sum_{\textbf{y}} Q(\textbf{x}, \textbf{y})(f(\textbf{y})-f(\textbf{x}))^2 &= \mathcal{I}_{\bar{w}} + \mathcal{I}_{\bar{z}}\cdot\mathbbm{1}\{\bar{z}\neq u\} + \mathcal{C}_{\bar{z}} + \mathcal{C}_{\bar{w}}\cdot\mathbbm{1}\{\bar{w}\neq \ell\}\\
    &\geq \mathcal{I}_{\bar{w}} + \mathcal{C}_{\bar{z}} \geq \mathcal{I}_{\bar{w}} + \mathcal{C}_{\bar{w}} \geq(\mu \wedge \varepsilon)N:=\sigma^2,
\end{align*}
since the minimum of 
\begin{equation*}
    g(x):=\mathcal{I}_{x}+\mathcal{C}_{x}=-\frac{\lambda}{N} x^2+(\lambda+\mu-\varepsilon)x+\varepsilon N
\end{equation*}
is achieved either at $g(0)=\varepsilon N$ or $g(N)=\mu N$. An application of Proposition \ref{RWBound} gives
\begin{equation}\label{resrwb}
    \overline{\mathbb{P}}_{w_0,z_0}\left(\tau^{\overline{W}, \overline{Z}}_{0}\wedge \tau^{\overline{W}, \overline{Z}}_{\rm{exit}}>\mbox{$\frac{\xi}{2}$}\right)\leq \overline{\mathbb{P}}_{w_0,z_0}\left(\tau^{\overline{W}, \overline{Z}}_{0}>\mbox{$\frac{\xi}{2}$}\right)\leq \frac{2\sqrt{2}d_0}{\sqrt{(\mu \wedge \varepsilon)N}\sqrt{\xi/2}}\leq \frac{4}{\sqrt{\mu \wedge \varepsilon}}\xi^{-\frac{1}{2}}.
\end{equation}
Then in view of observation \eqref{ob1} and \eqref{ob2}, we conclude that 
\begin{align*}
     \mathbb{P}_{w_0, z_0}\left(\tau_0\leq\xi/2,\tau_{\rm{exit}}>\xi/2\right)&=\overline{\mathbb{P}}_{w_0, z_0}\left(\tau^{\overline{W}, \overline{Z}}_{0}\leq\xi/2,\, \tau^{\overline{W}, \overline{Z}}_{\rm{exit}}>\xi/2\right)\\
    &= \overline{\mathbb{P}}_{w_0,z_0}\left(\tau^{\overline{W}, \overline{Z}}_{\rm{exit}}>\xi/2\right)-\overline{\mathbb{P}}_{w_0,z_0}\left(\tau^{\overline{W}, \overline{Z}}_{0}\wedge \tau^{\overline{W}, \overline{Z}}_{\rm{exit}}>\xi/2\right)\\
    &\geq\mathbb{P}_{w_0,z_0}\left(\tau_{\rm{exit}}>\xi/2\right)-\frac{4}{\sqrt{\mu \wedge \varepsilon}}\xi^{-\frac{1}{2}},
\end{align*}
and hence
\begin{align*}
    \mathbb{P}_{w_0, z_0}\left(\tau_0>\xi/2,\tau_{\rm{exit}}>\xi/2\right)&=\mathbb{P}_{w_0,z_0}\left(\tau_{\rm{exit}}>\xi/2\right)-\mathbb{P}_{w_0, z_0}\left(\tau_0\leq\xi/2,\tau_{\rm{exit}}>\xi/2\right)\\
    &\leq\frac{4}{\sqrt{\mu \wedge \varepsilon}}\xi^{-\frac{1}{2}},
\end{align*}
as desired.
\end{proof}
\subsection{Bounding the time to coalescence.}
We combine the previous results from this section for a proof of Lemma \ref{rapidmixing}.   
\begin{proof}[Proof of Lemma \ref{rapidmixing}]
Fix $\xi>0$. Denote the good event that the coupled processes $W_N(t)$ and $Z_N(t)$ are in the interior good set $\widetilde S$ by time $\frac{1-h}{2J}\log(N)$ by
\[
B_N:=\left\{N^{-1}
	W_N\left(\mbox{$\frac{1-h}{2J}$}\log(N)\right)\in\widetilde{S}, N^{-1}Z_N\left(\mbox{$\frac{1-h}{2J}$}\log(N)\right)\in\widetilde{S}\right\}.
	\]
Then the first part of Corollary~\ref{burn} implies
\begin{equation}
\begin{split}\label{eq:t12}
\mathbb{P}_{w_0,z_0}&\left(\tau_{\rm{couple}}>\mbox{$\frac{1-h}{2J}$}\log(N)+\mbox{$\frac{h}{2J}$}\log(N)+\xi\right) \\
	&\leq \mathbb{P}_{w_0,z_0}\left(\tau_{\rm{couple}}>\mbox{$\frac{1-h}{2J}$}\log(N)+\mbox{$\frac{h}{2J}$}\log(N)+\xi, B_N \right)
	+ 8e^{-C_1N^h}
	\end{split}
\end{equation}
By the Markov property, to bound the first term on the right hand side of~\eqref{eq:t12}, 
it is enough to bound, uniformly in $w_0/N, z_0/N\in\widetilde{S}\subset\widehat{S}$,
\begin{align*}
\mathbb{P}_{w_0,z_0}&\left(\tau_{\rm{couple}}>\mbox{$\frac{h}{2J}$}\log(N)+\xi\right) \\
 	&\leq \mathbb{P}_{w_0,z_0}\left(\tau_{\rm{couple}}>\mbox{$\frac{h}{2J}$}\log(N)+\xi, \tau_{\rm{exit}}>t_{\rm{follow}}  \right)+ 8e^{-C_1N^h} \\
	&\leq \mathbb{P}_{w_0,z_0}\left(\tau_{\rm{couple}}>\mbox{$\frac{h}{2J}$}\log(N)+\xi, \tau_{\rm{exit}}>\mbox{$\frac{h}{2J}$}\log(N)+\xi  \right)+ 8e^{-C_1N^h}
\end{align*}
where the first inequality follows from the second part of Corollary~\ref{burn}, and for the second inequality we assume
$N$ is sufficiently large so that $\mbox{$\frac{h}{2J}$}\log(N)+\xi\leq t_{\rm{follow}}$.

Lemma~\ref{med} and Lemma~\ref{final} combined with a straightforward application of the Markov property implies that for all $N$ large enough,
\begin{equation*}
\mathbb{P}_{w_0,z_0}\left(\tau_{\rm{couple}}>\mbox{$\frac{h}{2J}$}\log(N)+\xi,\, \tau_{\rm{exit}}>\mbox{$\frac{h}{2J}$}\log(N)+\xi\right)\leq \frac{4}{\sqrt{\mu \wedge \varepsilon}}\xi^{-\frac{1}{2}}+C_4e^{-\mbox{$\frac{J}{2}$}\xi}.
\end{equation*}

Combining the displays above gives 
\begin{equation*}
\mathbb{P}_{w_0,z_0}\left(\tau_{\rm{couple}}>\mbox{$\frac{1-h}{2J}$}\log(N)+\mbox{$\frac{h}{2J}$}\log(N)+\xi\right)\leq \frac{4}{\sqrt{\mu \wedge \varepsilon}}\xi^{-\frac{1}{2}}+C_4e^{-\mbox{$\frac{J}{2}$}\xi}+16e^{-C_1N^{h}},
\end{equation*}
and taking the limit superior as $N \to \infty$ on both sides finishes the proof.
\end{proof}

\section{Proof of the lower bound for the mixing time}\label{lowerSIS}
In this section, we prove the following lower bound on the mixing time, which together with Lemma \ref{rapidlymixing} proves Theorem \ref{cutoff}. Recall $\rho_N(t)=\max_{x\in\{0,1,\ldots,N\}}\lVert P^t_N(x,\cdot)-\pi_N \rVert_{TV}$ and $t_N=\frac{1}{2J}\log(N)$ from Definition~\ref{cutoffphenomenon} and Theorem~\ref{cutoff}.
\begin{lemma}\label{lowerbound}
For sufficiently large $\xi\geq0$, we have 
\begin{equation*}
    \liminf_{N\to\infty}\rho_N\left(t_N-\xi\right)\geq 1-\psi_2(\xi),
\end{equation*}
where $\psi_2(\cdot)$ is a non-negative real valued function depending only on $\lambda, \mu$ and $\varepsilon$ with $\psi_2(\xi)=O(1/\sqrt{\xi})$ as $\xi\to\infty$. 
\end{lemma}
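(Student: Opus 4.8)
The plan is a standard distinguishing-set argument. Fix $\xi$, take the worst-case starting state $x_0=N$, and, for a constant $r=r(\xi)$ to be chosen, set $B_N:=\{x\in\{0,\dots,N\}:|x-x^\star N|\le r\sqrt N\}$. Since $\rho_N(t_N-\xi)\ge\lVert P_N^{t_N-\xi}(N,\cdot)-\pi_N\rVert_{\rm TV}\ge\pi_N(B_N)-\mathbb{P}_N\!\big(X_N(t_N-\xi)\in B_N\big)$, it suffices to show that $\pi_N(B_N)\ge1-O(1/\sqrt\xi)$ while $X_N(t_N-\xi)\notin B_N$ with probability $1-O(1/\sqrt\xi)$. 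This will work provided $r$ lies between the (scaled) radius of concentration of $\pi_N$ around $x^\star N$ and the (scaled) distance, of order $e^{J\xi}$, separating the deterministic solution from $x^\star$ at time $t_N-\xi$: recall from Proposition~\ref{detsol} and Lemma~\ref{detb} that, started from $\alpha=1$, $x(t_N-\xi)-x^\star$ is of order $e^{-J(t_N-\xi)}=e^{J\xi}N^{-1/2}$, with a positive constant.

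The key new ingredient, and what I expect to be the main obstacle, is an improved version of Lemma~\ref{following}. Near $x^\star$ the drift $-Y_N(J+\lambda Y_N)$ of $Y_N:=X_N/N-x^\star$ is contracting with rate bounded away from $0$, and I would exploit this to show that the \emph{variance} of $X_N$ is only of order $N$, rather than the $N^{1+h}$ one would get from Lemma~\ref{following}. Precisely, I claim $\mathrm{Var}_{x_0}(X_N(t))=O(N)$, uniformly for $t\le t_{\rm follow}$, whenever $x_0\in N\widehat S$ or $x_0=N$, proved by revisiting the second-moment computation behind Lemma~\ref{mean}: in scaled coordinates the Dynkin martingale $M_N$ of \eqref{M_N} has predictable quadratic variation of density $O(N^{-1})$, and by Lemma~\ref{following} $Y_N$ stays $\ge -o(1)$ on $[0,t_{\rm follow}]$ with exponentially high probability in both cases, so the linear term in the representation \eqref{expsto} is contracting and a Gr\"onwall estimate closes the second-moment bound at the $O(N^{-1})$ scaled level. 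Via Chebyshev this gives the promised concentration inequality, with deviation scale $\sqrt N$ in place of $N^{(1+h)/2}$; removing the $N^{h/2}$ factor is essential, since the separation available at time $t_N-\xi$ is only of the constant-in-$N$ order $e^{J\xi}$ in units of $\sqrt N$.

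Granting this, I would first bound $\pi_N(B_N)$ from below. Run the chain from $x_0^\star:=\lceil x^\star N\rceil\in\widetilde S$ (for $N$ large); by Lemma~\ref{burn_x}(2) it stays in $\widehat S$ up to $t_{\rm follow}$ with probability $1-o(1)$, so the comparison estimates in the proof of Lemma~\ref{mean}, valid for all $t\le t_{\rm follow}$, give $|\mathbb{E}_{x_0^\star}X_N(t_N+\xi_1)-x^\star N|=o(\sqrt N)$, and Step~1 with Chebyshev then places $X_N(t_N+\xi_1)$ within $R\sqrt N$ of $x^\star N$ with probability $\ge1-C/R^2-o(1)$. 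Since $t_N+\xi_1=\frac1{2J}\log N+\xi_1$, Lemma~\ref{rapidlymixing} converts this into $\liminf_N\pi_N\!\big(B(x^\star N,R\sqrt N)\big)\ge1-C/R^2-\psi_1(\xi_1)$. On the other side, from $x_0=N$ one has $x_0/N-x^\star=1-x^\star>0$, so Lemma~\ref{mean}(2) gives $\mathbb{E}_N X_N(t_N-\xi)\ge x^\star N+K_2e^{J\xi}\sqrt N$, and Step~1 with Chebyshev gives $\mathbb{P}_N\!\big(X_N(t_N-\xi)\le x^\star N+\tfrac12K_2e^{J\xi}\sqrt N\big)=O(e^{-2J\xi})$.

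Finally, choose $\xi_1=\xi$, $R=\xi^{1/4}$ and $r:=\xi^{1/4}$; for all large $\xi$ one has $r<\tfrac12K_2e^{J\xi}$, so $B_N=B(x^\star N,r\sqrt N)$ both contains the ball from the previous step and lies inside $\{x\le x^\star N+\tfrac12K_2e^{J\xi}\sqrt N\}$. Combining the two displays, $\liminf_N\rho_N(t_N-\xi)\ge\big(1-C\xi^{-1/2}-\psi_1(\xi)\big)-O(e^{-2J\xi})=1-\psi_2(\xi)$ with $\psi_2(\xi)=C\xi^{-1/2}+\psi_1(\xi)+O(e^{-2J\xi})=O(1/\sqrt\xi)$, as required. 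Apart from Step~1, every step is routine bookkeeping with Lemmas~\ref{following}, \ref{mean}, \ref{burn_x}, \ref{rapidlymixing} and the deterministic estimates of Section~\ref{detsismodel}.
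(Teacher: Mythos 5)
Your proposal follows the same high-level distinguishing-set skeleton as the paper: concentrate $\pi_N$ in a $\Theta(\sqrt N)$ ball around $x^\star N$ using rapid mixing together with concentration at time $t_N+\xi$, and show that from a suitable starting state the chain at time $t_N-\xi$ is with high probability still $\Theta(e^{J\xi}\sqrt N)$ away from $x^\star N$, courtesy of Lemma~\ref{mean}. But there is a substantive issue in your Step~1, which is the load-bearing new claim.

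The paper's improved concentration (Lemma~\ref{const.follow}) is proved via \cite[Theorem~3.3]{barbour2022long} applied to a chain reflected at the boundary of $NI(r)$, $r<J/6\lambda$; the contraction needed in~\eqref{contracting} holds \emph{only} on that bounded interval (the drift of $|Z-W|$ is $((\lambda-\mu-\varepsilon)-\tfrac{\lambda}{N}(Z+W))(Z-W)$, which is positive when $Z,W$ are both small and $\lambda>\mu+\varepsilon$), and the statement is accordingly restricted to $x_0/N\in I(r/2)$. It therefore does not apply to $x_0=N$. Your substitute — ``$\mathrm{Var}_{x_0}(X_N(t))=O(N)$ for $x_0=N$, by Gr\"onwall since the drift is contracting'' — papers over the real difficulty. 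Writing $v(t)=\mathrm{Var}(Y_N(t))$, the exact evolution is
\begin{equation*}
\frac{dv}{dt}=-(2J+4\lambda\, \mathbb{E} Y_N(t))\,v(t)-2\lambda\,\mathbb{E}\bigl[(Y_N(t)-\mathbb{E} Y_N(t))^3\bigr]+O(N^{-1}),
\end{equation*}
so the Gr\"onwall coefficient $2J+4\lambda\,\mathbb{E} Y_N(t)$ is indeed bounded below by $2J$ when $\mathbb{E} Y_N(t)\ge 0$ (which holds starting from $x_0=N$), but one must still control the centred third moment, which a priori could be of order $v$ times a constant too large to close the estimate. To get $|\mathbb{E}\delta^3|\le o(1)\cdot v + \text{(exp.\ small)}$ one needs to feed in the pathwise closeness of $Y_N$ to $y(t)$ from Lemma~\ref{following} again, together with the bound on $|\mathbb{E} Y_N - y|$ from Lemma~\ref{mean}; your sketch does not even identify this term. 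As written, ``a Gr\"onwall estimate closes'' is a claim, not an argument, and a naive bound $\mathrm{Var}(Y_N^2)\le 4C_0^2\,\mathrm{Var}(Y_N)$ with $C_0=\max(x^\star,1-x^\star)$ gives a Gr\"onwall coefficient that need not be less than $1$.

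There is also a simpler structural point you missed: since $\rho_N$ is a supremum over starting states, \emph{any} choice suffices for the lower bound, and the paper takes $\bar x=\lfloor (x^\star+\tfrac r2)N\rfloor$, which lies in $I(r/2)$ and satisfies $\bar x/N-x^\star\approx r/2>0$. This choice makes both Lemma~\ref{const.follow} and Lemma~\ref{mean}(2) directly applicable and avoids proving any new concentration result. Your $x_0=N$ is an unforced complication that creates exactly the gap above. If you do wish to keep $x_0=N$ you need to carry out the second-moment ODE argument carefully, including the third-moment control. The switch from exponential to Chebyshev tails is otherwise harmless here, since the $O(1/\sqrt\xi)$ in the final bound is in any case dominated by $\psi_1(\xi)$ from Lemma~\ref{rapidlymixing}, and your choice $R=r=\xi^{1/4}$ then works.
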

Before outlining the strategy of the proof, we first state an improved version of the concentration of measure inequality from Lemma~\ref{following} for the scaled process $X_N(t)/N$, which holds only if the starting state of the chain $X_N(t)$ is close enough to $x^{\star}N$. Recall the interval of the form $I(r)=[x^{\star}-r, x^{\star}+r]$ defined in \eqref{def:interval}. In the previous section, we took $r=\eta(N)$, in defining our good set $\widehat{S}$, so that the size of the good set shrinks as $N$ gets large. To prove the lower bound, we consider $r<J/6\lambda$ as a positive constant, i.e.\ independent of $N$.
\begin{lemma}\label{const.follow}
Fix $h\in(0,1)$, $r < J/6\lambda$ and $\xi>0$. Let $x_0/N\in I(\frac{r}{2})$ and let $t\leq t_{\rm{follow}}=\mbox{$\frac{1}{J}$}\lceil e^{C_1N^{h}}\rceil$. Then the following holds for all $N$ large enough (depending on $h, \xi, \lambda, \mu$ and $\varepsilon$):
\begin{equation*}
    \mathbb{P}_{x_0}\left(\left\lvert X_N(t)-\mathbb{E}_{x_0}(X_N(t))\right\rvert \geq 2\xi\sqrt{N}\right)\leq 3e^{-\frac{J\xi^2}{3(\lambda+\mu+\varepsilon)}}.
\end{equation*}
\end{lemma}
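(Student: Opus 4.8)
The plan is to work directly with the deviation $\Delta(t):=X_N(t)-\mathbb{E}_{x_0}(X_N(t))$, whose drift near $x^\star N$ is linear with a strongly negative coefficient. Since the drift $F_X(x)=(\lambda-\mu-\varepsilon)x-\tfrac{\lambda}{N}x^2+\varepsilon N$ of $X_N$ is quadratic, writing $X_N^2-\mathbb{E}_{x_0}X_N^2=\Delta\bigl(X_N+\mathbb{E}_{x_0}X_N\bigr)-\mathrm{Var}_{x_0}(X_N)$ turns the equation for $\Delta$ (driven by the Dynkin martingale $M_N^X$ of $X_N$, with $\Delta(0)=0$) into
\[
d\Delta(t)=c(t)\,\Delta(t)\,dt+b(t)\,dt+dM_N^X(t),\qquad c(t)=(\lambda-\mu-\varepsilon)-\tfrac{\lambda}{N}\bigl(X_N(t)+\mathbb{E}_{x_0}X_N(t)\bigr),\quad b(t)=\tfrac{\lambda}{N}\mathrm{Var}_{x_0}(X_N(t)).
\]
I would work throughout on the event $G$ that $X_N(s)/N\in I(r)$ for all $s\le t$, which has probability at least $1-4e^{-C_1N^h}$ by the argument of Lemma~\ref{burn_x}(2) (applicable since $x_0/N\in I(r/2)$ and, $r$ being constant, condition~\eqref{cond:eta} holds for $N$ large). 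Using $2\lambda x^\star=\lambda-\mu-\varepsilon+J$, on $G$ one has $c(s)\le-(J-2\lambda r)$, which is $<-\tfrac{2J}{3}$ precisely because $r<J/(6\lambda)$. Solving the linear equation gives $\Delta(t)=\int_0^t e^{\int_s^t c(u)\,du}\bigl(b(s)\,ds+dM_N^X(s)\bigr)$; since the resolvent kernel is at most $e^{-(J-2\lambda r)(t-s)}$ and $\mathrm{Var}_{x_0}(X_N(s))=O(N)$ uniformly (a standard second-moment bootstrap of the crude bound from~\eqref{varbound}), the $b(s)\,ds$ term contributes only $O(1)$, negligible at scale $\sqrt N$.

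It remains to prove a Gaussian tail bound for $W(t):=\int_0^t e^{\int_s^t c(u)\,du}\,dM_N^X(s)$, and this is the step I expect to be the main obstacle: $t$ may be as large as $t_{\rm follow}$, which is stretched exponential in $N$, so one cannot simply convert $W$ into a martingale via the global integrating factor $e^{-\int_0^s c(u)\,du}$, whose size is uncontrolled over such long times, nor bound $\sup_{s\le t}|W(s)|$ (only of order $N^{-(1-h)/2}$ after rescaling). The resolution is that the decaying resolvent kernel only feels the recent past. On the event of probability $1-4e^{-C_1N^h}$ where $X_N(s)/N$ stays within $O(N^{-(1-h)/2})$ of the deterministic solution $x(s)$ for all $s\le t_{\rm follow}$ (Lemma~\ref{following}), and because $x(s)\to x^\star$ exponentially (Lemma~\ref{detb}), the multiplicative discrepancy $e^{\int_s^t(c(u)+J)\,du}$ between the true kernel and the pure exponential $e^{-J(t-s)}$ stays bounded by a fixed constant, uniformly in $s\le t$, once one also discards the part $s<t-L$ for a window $L\asymp N^{(1-h)/2}$ (whose contribution is super-exponentially small, since there $e^{-J(t-s)}\le e^{-JL}$). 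Hence $W(t)$ equals a bounded factor times $e^{-Jt}L^\sharp(t)$ up to an $o(\sqrt N)$ error, where $L^\sharp(s):=\int_{t-L}^s e^{Ju}\,dM_N^X(u)$ is a genuine martingale. Verifying that this bounded factor is close enough to $1$ that it does not spoil the target constant — a second place where $r<J/(6\lambda)$ is used — is the delicate bookkeeping.

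Finally I would apply a Freedman-type inequality (or Lemma~\ref{expmartingalebound}) to $L^\sharp$: on $G$ its predictable quadratic variation is at most $(\lambda+\mu+\varepsilon)N\int_{t-L}^t e^{2Ju}\,du\le\tfrac{\lambda+\mu+\varepsilon}{2J}Ne^{2Jt}$ and its jumps are at most $e^{Jt}$, so $\mathbb{P}\bigl(\bigl|e^{-Jt}L^\sharp(t)\bigr|\ge\delta\bigr)\le 2\exp\!\bigl(-\delta^2\big/\bigl((\lambda+\mu+\varepsilon)N/J+2\delta/3\bigr)\bigr)$. Taking $\delta\asymp 2\xi\sqrt N$ gives a tail of order $\exp\!\bigl(-cJ\xi^2/(\lambda+\mu+\varepsilon)\bigr)$ with $c$ comfortably exceeding $\tfrac13$; since the constant $\tfrac13$ in the statement is deliberately generous — absorbing the bounded kernel distortion, the $b(s)$ and truncation errors, and the $4e^{-C_1N^h}$ probabilities of leaving $G$ or of $X_N$ straying from $x(\cdot)$ — one concludes $\mathbb{P}_{x_0}\bigl(|X_N(t)-\mathbb{E}_{x_0}X_N(t)|\ge 2\xi\sqrt N\bigr)\le 3e^{-J\xi^2/(3(\lambda+\mu+\varepsilon))}$ for all $N$ large. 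For rigour one runs the argument for the chain stopped on leaving $I(r)$, so that the bounds on $c$, $b$ and $\langle M_N^X\rangle$ hold pathwise, and transfers back on $G$ at a cost $O(N)\cdot 4e^{-C_1N^h}=o(1)$.
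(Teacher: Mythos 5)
Your overall strategy — analysing the deviation $\Delta(t):=X_N(t)-\mathbb{E}_{x_0}X_N(t)$ directly via a Duhamel-type representation and a Bernstein bound — is a genuinely different route from the paper's. The paper instead introduces a chain $\overline{X}_N$ on $NI(r)$ that reflects at the boundary, verifies the Wasserstein-contraction hypothesis~\eqref{contracting} with $\ell=J/2$ (using $J-2\lambda r-2\lambda N^{-1}\ge J/2$, which is exactly where $r<J/6\lambda$ is used), invokes the ready-made concentration theorem \cite[Theorem~3.3]{barbour2022long} on the reflected chain, and then transfers back to $X_N$ at the small cost of $\mathbb{P}(\tau^X_{\rm exit}(I(r))<t)\le 4e^{-C_1N^h}$ and a difference in means of size $O(Ne^{-C_1N^h})=o(1)$. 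The reflected-chain device is what makes the drift condition hold at every state and allows the off-the-shelf theorem to produce the clean denominator $qL^2D^2/\ell=2(\lambda+\mu+\varepsilon)N/J$, from which the constant $J/3$ follows with a modest amount of slack.

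Your sketch, by contrast, has a genuine gap at the step ``Hence $W(t)$ equals a bounded factor times $e^{-Jt}L^\sharp(t)$ up to an $o(\sqrt N)$ error.'' The kernel in the Duhamel formula is
\begin{equation*}
e^{\int_s^t c(u)\,du}=\varphi(s)\,e^{-J(t-s)},\qquad \varphi(s):=e^{\int_s^t (c(u)+J)\,du},
\end{equation*}
and $\varphi$ depends on $s$, so $W(t)$ is \emph{not} a single constant times the martingale $e^{-Jt}L^\sharp(t)$. Making the relation precise (e.g.\ by integration by parts) introduces $\sup_{s}|e^{-Jt}L^\sharp(s)|$ multiplied by the total variation of $\varphi$, and on $[t-L,t]$ with your $L\asymp N^{(1-h)/2}$ the error $\int_{t-L}^t O(N^{-(1-h)/2})\,du$ from the discrepancy between $X_N(u)/N$ and $x(u)$ is $O(1)$, not $o(1)$, so this contribution is $O(\sqrt N)$ — the same size as the quantity being bounded. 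Moreover the ``bounded factor'' is in general not close to $1$: using Lemma~\ref{detb}, $\int_0^\infty|c(u)+J|\,du\le 2\lambda\cdot\tfrac{2}{J-(\lambda-\mu-\varepsilon)}|y(0)|\le \tfrac{2\lambda r}{J-(\lambda-\mu-\varepsilon)}$, and since $J-(\lambda-\mu-\varepsilon)=\tfrac{4\lambda\varepsilon}{J+(\lambda-\mu-\varepsilon)}$ this can be made arbitrarily large by taking $\lambda/\varepsilon$ large, so the factor $e^{2K}$ that divides your Bernstein exponent is not absorbed by the slack in the constant $1/3$. Your claim that ``$c$ comfortably exceeds $\tfrac13$'' is therefore incorrect as stated. (The issue is not fatal in principle: comparing against the \emph{deterministic} kernel $e^{\int_s^t c_{\rm det}(u)\,du}$ with $c_{\rm det}(u):=(\lambda-\mu-\varepsilon)-2\lambda x(u)$, which is within a factor $1+o(1)$ of the random kernel on the concentration event if one also shrinks the window to $L=O(\log N)$, gives a genuine martingale transform whose kernel decays at rate at least $J-\lambda r>5J/6$, and this does produce the target constant with plenty of room. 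But neither of these adjustments is in your write-up, and without them the argument does not close.) A lesser issue is the drift term $b(s)=\tfrac{\lambda}{N}\mathrm{Var}_{x_0}(X_N(s))$: the bound~\eqref{varbound} gives only $\mathrm{Var}_{x_0}(X_N(s))=O(N^{1+h})$, hence $\int_0^te^{-(J-\lambda r)(t-s)}b(s)\,ds=O(N^h)$, which is $o(\sqrt N)$ only for $h<1/2$; the ``second-moment bootstrap'' you appeal to is not spelled out.
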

The lemma essentially follows from an application of \cite[Theorem 3.3]{barbour2022long}, but the details are technical, so we postpone the proof until the end of this section. Assuming that Lemma~\ref{const.follow} holds, the proof of Lemma \ref{lowerbound} is then broken down into two steps. 

The first step is taken in Lemma \ref{concentration.pi}, where we show that the stationary distribution $\pi_N$ is 
with high probability concentrated in a ball of order $\sqrt{N}$ around the fixed point $x^{\star} N$. The explicit expression for the stationary distribution can be obtained by solving the detailed balance equation, (see for example the displays (A1) and (A2) in \cite{mieghem_2020}), but it is not easy to observe the concentration from this expression. The outline of our strategy is to first fix $\xi>0$ and choose $N$ large enough such that $t_N+\xi<t_{\rm{follow}}$. We use the improved concentration inequality of Lemma~\ref{const.follow} to show that the distribution of $X_N(t_N+\xi)$ is concentrated in a $2\xi\sqrt{N}$-ball around $\mathbb{E}_{x_0}(X_N(t_N+\xi))$ given $x_0/N\in I(\frac{r}{2})$. Next, the first part of Lemma~\ref{mean} implies that $\mathbb{E}_{x_0}(X_N(t_N+\xi))$ is within $e^{-J\xi}K_1\sqrt{N}$ of $x^{\star}N$. Together these imply that $X_N(t_N+\xi)$ is in a $(e^{-J\xi}K_1+2\xi)\sqrt{N}$ ball around $x^{\star}N$ with high probability. Finally, since the distribution of $X_N$ is close to stationary by the time $t_N+\xi$, as stated in Lemma~\ref{rapidlymixing}, this implies that $\pi_N$ also assigns most of the mass to a $(e^{-J\xi}K_1+2\xi)\sqrt{N}$ ball around $x^{\star}N$.

To prove the lower bound, it is sufficient to only consider one specific initial state. For convenience, we take the initial state to be $\bar{x}:=\lfloor (x^{\star}+\frac{r}{2})N \rfloor$. The second step of the proof of Lemma~\ref{lowerbound} is then to show that the distribution of $X_N(t_N-\xi)$ given $X_N(0)=\bar{x}$ is concentrated in a region that is sufficiently apart from $x^{\star}N$; combined with the concentration of $\pi_N$ around $x^{\star}N$, this gives a lower bound on the mixing time. The outline of the proof of this result is as follows. First fix a $\xi$ large enough such that $e^{J\xi}K_2-2\xi>e^{-J\xi}K_1+2\xi$, where $K_1, K_2$ are as in Lemma~\ref{mean}. By the improved concentration inequality, the distribution of $X_{N}(t_N-\xi)$ is concentrated in a $2\xi\sqrt{N}$-ball around $\mathbb{E}_{\bar{x}}(X_{N}(t_N-\xi))$. The second part of Lemma~\ref{mean} then implies that $\mathbb{E}_{\bar{x}}(X_{N}(t_N-\xi))$ at least $e^{J\xi}K_2\sqrt{N}$ above the fixed point $x^{\star}N$. Together, these two results indicate that with high probability 
\[
    X_{\bar{x}}(t_N-\xi)- x^{\star}N \geq (e^{J\xi}K_2-2\xi)\sqrt{N}>(e^{-J\xi}K_1+2\xi)\sqrt{N},
\] 
but by Lemma \ref{concentration.pi}, we know that $\pi_N$ assigns most of its mass within $(e^{-J\xi}K_1+2\xi)\sqrt{N}$ of   $x^{\star}N$.

Now, with the outline in mind, we state and prove concentration for the stationary distribution $\pi_N$.
\begin{lemma}\label{concentration.pi}
For any $\xi>0$, define the set
\begin{equation*}
    \mathcal{S}_N:=\left\{x\in\{0,1,\ldots,N\}:\left\lvert x-x^{\star}N\right\rvert \leq (e^{-J\xi}K_1+2\xi)\sqrt{N}\right\},
\end{equation*}    
where $K_1$ is the constant defined in Lemma \ref{mean}. One has 
\begin{equation*}
    \limsup_{N\to\infty}\pi_N(\mathcal{S}_N^c)\leq \psi_1(\xi)+3e^{-\frac{J\xi^2}{3(\lambda+\mu+\varepsilon)}},
\end{equation*}
where $\psi_1(\xi)=O(1/\sqrt{\xi})$ is as defined in Lemma \ref{rapidlymixing}.
\end{lemma}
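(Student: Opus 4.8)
The plan is to exploit the rapid mixing from Lemma~\ref{rapidlymixing} together with the direct control we have on the law of $X_N$ at time $t_N+\xi$, pushing the concentration of that law onto $\pi_N$. Fix $\xi>0$, pick any reference state $x_0$ with $x_0/N\in I(\frac r2)$ (for instance $x_0:=\lfloor x^{\star}N\rfloor$, which satisfies $\lvert x_0/N-x^{\star}\rvert\le 1/N<\frac r2$ for all $N$ large), and set $t:=t_N+\xi$. Since $t_{\rm follow}=\frac1J\lceil e^{C_1N^{h}}\rceil$ grows faster than any power of $\log N$, we have $t\le t_{\rm follow}$ for all $N$ sufficiently large, so Lemma~\ref{const.follow} applies with this $t$ and starting state $x_0$ and gives
\begin{equation*}
    \mathbb{P}_{x_0}\bigl(\lvert X_N(t)-\mathbb{E}_{x_0}(X_N(t))\rvert\ge 2\xi\sqrt N\bigr)\le 3e^{-\frac{J\xi^2}{3(\lambda+\mu+\varepsilon)}}.
\end{equation*}

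Next I would invoke the first part of Lemma~\ref{mean} with $c=\xi$ (applicable since $x_0/N-x^{\star}$ is bounded and $N$ is large), which yields $\lvert\mathbb{E}_{x_0}(X_N(t))-x^{\star}N\rvert\le K_1e^{-J\xi}\sqrt N$. Combining this with the previous display by the triangle inequality shows that, on an event of probability at least $1-3e^{-J\xi^2/(3(\lambda+\mu+\varepsilon))}$, one has $\lvert X_N(t)-x^{\star}N\rvert\le (K_1e^{-J\xi}+2\xi)\sqrt N$, i.e.\ $X_N(t)\in\mathcal{S}_N$. Hence
\begin{equation*}
    P_N^{t}(x_0,\mathcal{S}_N^c)=\mathbb{P}_{x_0}\bigl(X_N(t)\in\mathcal{S}_N^c\bigr)\le 3e^{-\frac{J\xi^2}{3(\lambda+\mu+\varepsilon)}}.
\end{equation*}
Finally, by the definition of total variation distance and of $\rho_N$,
\begin{equation*}
    \pi_N(\mathcal{S}_N^c)\le P_N^{t}(x_0,\mathcal{S}_N^c)+\lVert P_N^{t}(x_0,\cdot)-\pi_N\rVert_{TV}\le 3e^{-\frac{J\xi^2}{3(\lambda+\mu+\varepsilon)}}+\rho_N(t_N+\xi),
\end{equation*}
and taking $\limsup_{N\to\infty}$ together with Lemma~\ref{rapidlymixing}, which bounds $\limsup_N\rho_N(t_N+\xi)\le\psi_1(\xi)$, gives the claim.

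The argument is short because all the heavy lifting sits in the inputs: Lemma~\ref{const.follow} supplies a Gaussian-type deviation bound at exactly the timescale $t_N+\xi$, and Lemma~\ref{mean}(1) pins down the mean to within $O(\sqrt N)$ of the fixed point. The only thing requiring a little care — and the closest thing to an obstacle — is the bookkeeping needed so that all three inputs (Lemma~\ref{const.follow}, Lemma~\ref{mean}(1), Lemma~\ref{rapidlymixing}) can be applied simultaneously at the same time $t=t_N+\xi$ and the same reference state $x_0$: one must check $t\le t_{\rm follow}$ and $x_0/N\in I(\frac r2)$ for $N$ large, both of which are immediate. Conceptually, the one slightly non-obvious move is that the mixing estimate is being used ``in reverse'': instead of concluding that the chain started from $x_0$ is close to $\pi_N$, we transfer the forward-in-time concentration of $X_N(t_N+\xi)$ onto $\pi_N$ itself.
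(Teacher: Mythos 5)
Your proof is correct and follows essentially the same approach as the paper's: apply Lemma~\ref{const.follow} and Lemma~\ref{mean}(1) at time $t_N+\xi$ from a state $x_0/N\in I(\tfrac r2)$, combine via the triangle inequality to localize $X_N(t_N+\xi)$ in $\mathcal{S}_N$, and transfer to $\pi_N$ through the total variation bound together with Lemma~\ref{rapidlymixing}. The only cosmetic difference is that you make a concrete choice $x_0=\lfloor x^{\star}N\rfloor$, whereas the paper leaves $x_0$ generic in $I(\tfrac r2)$.
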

\begin{proof}
Let $x_0/N\in I(\frac{r}{2})$, let $h\in(0,1)$ and let
$\xi>0$. 
Lemma \ref{const.follow} implies that, as long as $N$ is sufficiently large, 
\begin{align*}
    \mathbb{P}_{x_0}\left(\left\lvert X_N\left(t_N+\xi
    \right)-\mathbb{E}_{x_0}\left(X_N(t_N+\xi)\right)\right\rvert\leq 2\xi\sqrt{N}\right)\geq1-3e^{-\frac{J\xi^2}{3(\lambda+\mu+\varepsilon)}}.
\end{align*}
Moreover, applying the first part of Lemma \ref{mean} with $c=\xi$, we obtain 
\begin{equation*}
    \lvert \mathbb{E}_{x_0}(X_N(t_N+\xi))-x^{\star}N \rvert \leq e^{-J\xi}K_1\sqrt{N},
\end{equation*}
for $N$ large enough. It then follows that, for $N$ large enough,
\begin{align*}
    \mathbb{P}_{x_0}&\left(\left\lvert X_N(t_N+\xi)-x^{\star}N\right\rvert \leq (2\xi+e^{-J\xi}K_1)\sqrt{N}\right) \\
    &\geq \mathbb{P}\left(\left\lvert X_N(t_N+\xi)-\mathbb{E}_{x_0}\left(X_N(t_N+\xi)\right)\, N\right\rvert \leq 2\xi\sqrt{N}\right)\\
    &\geq 1-3e^{-\frac{J\xi^2}{3(\lambda+\mu+\varepsilon)}}.
\end{align*}
Thus, for any $\xi>0$ and $N$ large enough, the inequality implies that
\begin{align*}
    \pi_N(\mathcal{S}_N^c)& \leq P_N^{t_N+\xi}(x_0,\,\mathcal{S}_N^c)+\rho_N(t_N+\xi) \\
    	&\leq 3e^{-\frac{J\xi^2}{3(\lambda+\mu+\varepsilon)}} + \rho_N(t_N+\xi),
\end{align*}
where $P_N^{t}(x_0,\,\cdot)$ is the law of $X_N(t)$ given the initial state $x_0$. The result now follows by taking the limsup and applying  Lemma \ref{rapidlymixing}.
\end{proof}

We can now prove our lower bound on the mixing time.

\begin{proof}[Proof of Lemma~\ref{lowerbound}]
Fix a large enough $\xi>0$ such that $e^{J\xi}K_2-2\xi>e^{-J\xi}K_1+2\xi$ and let $\bar{x}=\lfloor (x^{\star}+\frac{r}{2})N \rfloor$. It follows from the second part of Lemma \ref{mean} by taking $c=-\xi$ that  
\begin{equation*}
    \mathbb{E}_{\bar{x}}(X_N(t_N-\xi))-x^{\star}N \geq e^{J\xi}K_2\sqrt{N},
\end{equation*}
for all $N$ large enough. Moreover, note that $\bar{x}/N\in I(\frac{r}{2})$ and so for any $N$ such that $t_N-\xi\leq t_{\rm{follow}}$, Lemma \ref{const.follow} implies that
\begin{align*}
    \mathbb{P}_{\bar{x}}\left(\left\lvert X_N(t_N-\xi)-
    \mathbb{E}_{\bar{x}}\left(X_N(t_N-\xi)\right)\right\rvert \leq 2\xi\sqrt{N}\right)\geq1-3e^{-\frac{J\xi^2}{3(\lambda+\mu+\varepsilon)}}.
\end{align*}
for all $N$ large enough. Combining the previous two relations gives 
\begin{equation}\label{P_N}
    P_N^{t_N-\xi}(\bar{x}, \mathcal{S}_N)\leq3e^{-\frac{J\xi^2}{3(\lambda+\mu+\varepsilon)}},
\end{equation}
where $\mathcal{S}_N$ is the set defined in Lemma \ref{concentration.pi}. Now, 
\begin{align*}
    \rho_N(t_N-\xi) &\geq \lVert P_N^{t_N-\xi}(\bar{x}, \cdot)-\pi_N \rVert_{TV} \geq \lvert \pi_N(\mathcal{S}_N)-P_N^{t_n-\xi}(\bar{x}, \mathcal{S}_N)\rvert,
\end{align*}
so that \eqref{P_N} and Lemma \ref{concentration.pi} imply that for all sufficiently large $\xi$,
\begin{equation*}
    \liminf_{N\to\infty} \rho_N(t_N-\xi)\geq 1-\psi_1(\xi)-6e^{-\frac{J\xi^2}{3(\lambda+\mu+\varepsilon)}}.
\qedhere\end{equation*}
\end{proof}

The only thing left at this point is to prove Lemma \ref{const.follow}. To do this, we shall make use of \cite[Theorem 3.3]{barbour2022long} which provides a concentration inequality for contracting Markov chains.
\begin{theorem}\cite[Theorem 3.3]{barbour2022long}\label{Wconcentration}
    Let $X(t)$ be a stable, conservative, non-explosive continuous-time Markov chain on a discrete state space $E$, with generator matrix $Q:=(Q(x,y): x,y\in E)$. Suppose that $d(\cdot,\,\cdot)$ is a metric on $E$, and let $f : E \to \mathbb{R}$ be a function such that, for some constant $L$, $\lvert f(x)-f(y)\rvert \leq Ld(x, y)$ for all $x, y \in E$. Let $\widehat{E}$ be a subset of $E$, and let $q$ and $D$ be constants such that $-Q(x, x) \leq q$ for all $x \in \widehat{E}$ and $d(x,\,y) \leq  D$ whenever $x \in \widehat{E}$ and $y$ is such that $Q(x, y) > 0$. For $t > 0$, let $A_t = \{X(s) \in \widehat{E} \text{ for } 0 \leq s < t\}$. Suppose there is a Markovian coupling of two copies of $X(t)$ with generator $\mathcal{A}$ and constant $\ell>0$, such that for all $x, y\in E$, 
    \begin{equation}\label{contracting}
        \mathcal{A}d(x,y)\leq -\ell d(x,y).  
    \end{equation}
    Then, for all $x \in \widehat{E}, t > 0$ and $\xi\geq0$,
    \begin{equation*}
        \mathbb{P}_x\left(\left\{\left\lvert f(X(t))-\mathbb{E}_x(f(X(t)))\right\rvert \geq \xi \right\}\cap A_t\right)\leq 2\exp\left(-\frac{\xi^2}{qL^2D^2/\ell+2LD\xi/3}\right).
    \end{equation*}
\end{theorem}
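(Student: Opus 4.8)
The plan is to realise $f(X(t)) - \mathbb{E}_x f(X(t))$ as the terminal increment of a Doob martingale run along the trajectory of $X$, to control that martingale's jumps and predictable quadratic variation on the good set $\widehat E$ using the contraction hypothesis~\eqref{contracting}, and to conclude with a Freedman--Bernstein inequality for jump martingales. Throughout one may assume $f$ bounded (automatic when $E$ is finite, as in our application; the general case follows by truncation), so that all semigroup quantities below are finite.

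\emph{Contraction of the semigroup.} Write $P_u$ for the transition semigroup of $X$. Running the hypothesised coupling $(X',X'')$ from $(x,y)$ and setting $\phi(u) := \mathbb{E}[d(X'(u),X''(u))]$, Dynkin's formula together with $\mathcal{A}d \le -\ell d$ gives $\phi(u) \le d(x,y) - \ell\int_0^u \phi(v)\,dv$, so Gr\"onwall's lemma yields $\phi(u) \le e^{-\ell u}d(x,y)$. Since the two marginals of the coupling are copies of $X$ and $f$ is $L$-Lipschitz for $d$, it follows that $P_u f$ is $(Le^{-\ell u})$-Lipschitz: $|P_uf(z)-P_uf(z')| \le L\,\mathbb{E}[d(X'(u),X''(u))] \le Le^{-\ell u}d(z,z')$ for all $z,z'\in E$ and $u\ge 0$.

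\emph{A Doob martingale localised to $\widehat E$.} Fix $t>0$ and put $M_s := (P_{t-s}f)(X(s)) = \mathbb{E}[f(X(t))\mid\mathcal F_s]$ for $0\le s\le t$; this is a martingale with $M_0 = \mathbb{E}_x f(X(t))$, $M_t = f(X(t))$, and its continuous finite-variation part (coming from the $s$-dependence of $P_{t-s}f$ between jumps) is a pure drift exactly cancelled by the compensator of its jumps, so $M-M_0$ is a compensated jump (purely discontinuous) martingale. Let $\tau := \inf\{s\ge 0: X(s)\notin\widehat E\}$ and set $\widetilde M_s := M_{s\wedge\tau}$. On $A_t$ one has $\tau\ge t$, hence $\widetilde M_t = f(X(t))$, so it suffices to bound $\mathbb{P}(|\widetilde M_t - \widetilde M_0|\ge\xi)$. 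A jump of $\widetilde M$ occurs only at a jump time $s\le\tau$ of $X$, from $z := X(s^-)\in\widehat E$ to some $z'$ with $Q(z,z')>0$; then $d(z,z')\le D$ and, by the previous step, $|\Delta\widetilde M_s| = |P_{t-s}f(z')-P_{t-s}f(z)| \le Le^{-\ell(t-s)}d(z,z') \le LD =: B$, the bound including the exit jump at $s=\tau$ since $d(z,z')\le D$ requires only $z\in\widehat E$. Likewise, the $s$-density of the predictable quadratic variation of $\widetilde M$ is, on $\{s\le\tau\}$, at most $L^2e^{-2\ell(t-s)}\sum_{z'}Q(z,z')d(z,z')^2 \le qL^2D^2e^{-2\ell(t-s)}$, using $d(z,z')\le D$ and $-Q(z,z)\le q$ for $z\in\widehat E$; integrating over $[0,t]$ gives $\langle\widetilde M\rangle_t \le qL^2D^2/(2\ell) =: V$ almost surely.

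\emph{Bernstein step.} For a martingale with jumps bounded by $B$ and predictable quadratic variation bounded by $V$, the exponential-supermartingale argument --- $\exp\!\big(\theta(\widetilde M_s-\widetilde M_0) - \tfrac{\theta^2/2}{1-\theta B/3}\langle\widetilde M\rangle_s\big)$ is a supermartingale for $0<\theta<3/B$, and one then optimises over $\theta$ --- yields $\mathbb{P}(\widetilde M_t-\widetilde M_0\ge\xi) \le \exp\!\big(-\xi^2/(2(V+B\xi/3))\big)$, and the same for the lower tail by applying it to $-\widetilde M$. With $B=LD$ and $V=qL^2D^2/(2\ell)$ this reads $\exp\!\big(-\xi^2/(qL^2D^2/\ell+2LD\xi/3)\big)$; a union bound together with the inclusion from the previous step gives the claimed inequality. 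The one delicate point is the bookkeeping in the middle step: the bounds $-Q(z,z)\le q$, $d(z,z')\le D$ and the decay factor $Le^{-\ell(t-s)}$ must be invoked only while $s\le\tau$, i.e.\ on the good set and (crucially) also at the exit jump --- none of them is assumed outside $\widehat E$ --- and it is precisely this localisation of the Doob martingale, rather than the Bernstein estimate itself, that requires care.
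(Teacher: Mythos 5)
This statement is not proved in the paper at all: it is Theorem~3.3 of Barbour, Brightwell and Luczak~\cite{barbour2022long}, imported verbatim as a tool, so there is no in-paper proof to compare against. Your reconstruction is sound and follows the same strategy as the cited source: Wasserstein contraction of the semigroup deduced from the coupling generator bound, the Doob martingale $M_s=(P_{t-s}f)(X(s))$ stopped at the exit time of $\widehat E$, jump and predictable-quadratic-variation bounds valid up to and including the exit jump ($B=LD$, $V\le qL^2D^2/(2\ell)$), and a Freedman--Bernstein estimate, whose constants $2(V+B\xi/3)=qL^2D^2/\ell+2LD\xi/3$ reproduce the stated bound exactly, with the factor $2$ from the two tails. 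The only points you gloss are technical: the Dynkin/Gr\"onwall step for $\phi(u)=\mathbb{E}[d(X'(u),X''(u))]$ needs integrability of $d$ along the (non-explosive) coupling, and the truncation reduction for unbounded $f$ deserves a sentence; both are vacuous in the finite-state setting in which the theorem is applied here, so they are caveats rather than gaps.
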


We are not able to apply this theorem directly to our chain $X_N(t)$ since it is not contracting on the entire state space. One could modify the theorem to get a version that requires the contraction property to hold only on a ``good'' subset of the state space, as
is the case for $X_N(t)$ near $x^{\star}N$. (Indeed, \cite[Theorem~2.3]{barbour2022long} is stated in such a form, but for discrete-time.) Instead, we avoid this issue by making use of the reflected chain $\overline{X}_N(t)$ defined for the final phase immediately after Proposition~\ref{RWBound}, which behaves as $X_N(t)$ in a good set and reflects at the boundary. Here, with a bit of abuse of notation, we define $\overline{X}_N(t)$ to be the reflected chain on the state space $E:=NI(r)\cup \{\lfloor (x^{\star}+r)N \rfloor + 1\} \cup \{\lceil (x^{\star}-r)N \rceil - 1\}$, where $\{\lfloor (x^{\star}+r)N \rfloor + 1\}$ and $\{\lceil (x^{\star}-r)N \rceil - 1\}$ are considered to the the upper and lower boundary respectively and $r<J/6\lambda$ is a fixed positive constant.
\begin{proof}[Proof of Lemma \ref{const.follow}]
Recall the definition of $\tau^{X}_{\rm{exit}}(I(r))$ from \eqref{xexit}. Just as we argued in the passage following~\eqref{2211}, it is sufficient to prove the concentration for $\overline{X}_N(t)$ on $A_t=\{\tau^{\overline{X}}_{\rm{exit}}(I(r))\geq t\}$, where $\tau^{\overline{X}}_{\rm{exit}}(I(r)):=\inf\{t\geq0, \overline{X}_N(t)\in \{\lfloor (x^{\star}+r)N \rfloor + 1\}\cup \{\lceil (x^{\star}-r)N \rceil - 1\}\}$. To apply Theorem \ref{Wconcentration}, we take $f$ to be the identity function and the metric $d(x,\,y)=\lvert x-y \rvert$ so that $L=1$. The jump size of $\overline{X}_N(t)$ is clearly bounded by $D=1$ and we have for all $x\in \{0,\ldots,N\}$
    \begin{equation*}
        -Q(x,x) \le \lambda x(1-x/N)+\varepsilon(N-x)+\mu x\leq(\lambda+\mu+\varepsilon)N:=q.
    \end{equation*}
Finally, we show that there is a coupling $(\overline{W}_N(t), \overline{Z}_N(t))$ of two copies of $\overline X_N(t)$ which satisfies \eqref{contracting} for all $(\overline{W}_N(0), \overline{Z}_N(0))=(w,z)\in E\times E$, and hence $\overline{X}_N(t)$ is contracting in Wasserstein distance. The coupling is that used earlier in Section~\ref{sec:finphase}, where the copies move independently until they collide, and then they move in unison. Verifying~\eqref{contracting} for this coupling follows very similarly to verifying Condition (1) in Proposition \ref{RWBound}. 

To compute the left-hand side of \eqref{contracting}, first note that if the two copies have already collided with each other, then the left-hand side is $0$. Otherwise, the left-hand side is the same as in~\eqref{contracting_coupling} with $\eta$ replaced by $r$. Since we have chosen $r$ to be less than $J/6\lambda$, so for sufficiently large $N$ we must have $J-2\lambda r-2\lambda N^{-1}\geq J/2$. Therefore,~\eqref{contracting} is satisfied with $\ell=J/2$. Then, Theorem \ref{Wconcentration} implies that for all $x_0\in \widehat{E}:=NI(r)$, $t\geq0$, and $\xi>0$,
    \begin{align*}              
    \overline{\mathbb{P}}_{x_0}\left(\left\{\left\lvert \overline{X}_N(t)-\overline{\mathbb{E}}_{x_0}(\overline{X}_N(t))\right\rvert \geq \xi\sqrt{N}\right\}\cap \{\tau^{\overline{X}}_{\rm{exit}}(I(r))\geq t\}\right)\leq 2e^{-\frac{\xi^2}{2(\lambda+\mu+\varepsilon)/J+2\xi/(3\sqrt{N})}}.
    \end{align*}
    Moreover, since $X_N(t)$ and $\overline{X}_N(t)$ are path-wise indistinguishable in probability before leaving $NI(r)$, we have 
       \begin{align*}
           \big\lvert \mathbb{E}_{x_0}(X_N(t))-\overline{\mathbb{E}}_{x_0}(\overline{X}_N(t))\big\rvert
        	&=  \left \lvert \mathbb{E}_{x_0}\left(X_N(t)\mathbbm{1}_{\tau^{X}_{\rm{exit}}(I(r))< t} \right)-\overline{\mathbb{E}}_{x_0}\left(\overline{X}_N(t)\mathbbm{1}_{\tau^{\overline X}_{\rm{exit}}(I(r))< t}\right) \right\rvert\\
        &\leq N \mathbb{P}_{x_0}(\tau^{X}_{\rm{exit}}(I(r))< t).
    \end{align*}    
The second part of Lemma~\ref{burn_x} 
implies that for all $t\leq t_{\rm{follow}}$, $x_0/N\in I(\frac{r}{2})\subset I(r)$ and all sufficiently large $N$,  we have
\begin{equation}\label{eq: exit_Ir_bound}
    \mathbb{P}_{x_0}(\tau^{X}_{\rm{exit}}(I(r))< t)\leq \mathbb{P}_{x_0}(\tau^{X}_{\rm{exit}}(I(r))\leq t_{\rm{follow}})\leq 4e^{-C_1N^h},
\end{equation}
 and hence 
    \begin{equation*}
        \left\lvert \mathbb{E}_{x_0}(X_N(t))-\overline{\mathbb{E}}_{x_0}(\overline{X}_N(t)) \right\rvert=o(1).
    \end{equation*}
    Now, for any fixed $\xi>0$, choose $N$ large enough such that $\left\lvert \mathbb{E}_{x_0}(X_N(t))-\overline{\mathbb{E}}_{x_0}(\overline{X}_N(t)) \right\rvert<\xi\sqrt{N}$. It follows that 
    \begin{align*}
        \mathbb{P}_{x_0}&\left(\left\{\left\lvert X_N(t)-\mathbb{E}_{x_0}(X_N(t))\right\rvert \geq 2\xi\sqrt{N}\right\}\cap \{\tau^{X}_{\rm{exit}}(I(r))\geq t\}\right)\\ 
        &\leq \overline{\mathbb{P}}_{x_0}\left(\left\{\left\lvert \overline{X}_N(t)-\overline{\mathbb{E}}_{x_0}(\overline{X}_N(t))\right\rvert \geq \xi\sqrt{N}\right\}\cap \{\tau^{\overline{X}}_{\rm{exit}}(I(r))\geq t\}\right)\leq 2e^{-\frac{\xi^2}{2(\lambda+\mu+\varepsilon)/J+2\xi/(3\sqrt{N})}}.
    \end{align*}
Together with \eqref{eq: exit_Ir_bound}, we have 
    \begin{align*}
        \mathbb{P}_{x_0}\left(\left\lvert X_N(t)-\mathbb{E}_{x_0}(X_N(t))\right\rvert \geq 2\xi\sqrt{N}\right)
        	&\leq 2e^{-\frac{\xi^2}{2(\lambda+\mu+\varepsilon)/J+2\xi/(3\sqrt{N})}}+4e^{-C_1N^h}\\
        & \leq 3e^{-\frac{J\xi^2}{3(\lambda+\mu+\varepsilon)}},
    \end{align*}
for $N$ large enough, as required.
\end{proof}

\printbibliography

\end{document}